\documentclass[12pt, reqno]{amsart}

\tolerance=1000
\hbadness=10000
\raggedbottom

\usepackage{lineno}

\usepackage{amscd, amsmath, latexsym, mathtools, thmtools, amssymb, amsthm, enumerate}

\usepackage{lscape,fancyhdr}

\usepackage[top=2.5cm, left=3cm, right=3cm]{geometry}

\usepackage[dvipsnames]{xcolor}
\usepackage{epsfig,graphicx,tikz}
\usepackage{wrapfig, caption, subcaption}
\usepackage{float, multicol}
\usepackage[all,cmtip]{xy}
\usepackage{tikz-cd}
\usepackage{tkz-euclide}
\usepackage[inkscapelatex=false]{svg}

\definecolor{lnkcol}{rgb}{0,0.25,0.75}



\usepackage[alphabetic, msc-links]{amsrefs}
\hypersetup{colorlinks=true, linkcolor=lnkcol, urlcolor=magenta, citecolor=RubineRed}

\setlength{\unitlength}{0.4in}
\setlength{\textheight}{23cm}
\setlength{\textwidth}{15cm}

\allowbreak




\newtheorem{thmA}{Theorem}
\newtheorem{corA}{Corollary}

\newtheorem{secn}{Definition}[section]
\newtheorem{thm}[secn]{Theorem}
\newtheorem{cor}[secn]{Corollary}
\newtheorem{prop}[secn]{Proposition}
\newtheorem{lem}[secn]{Lemma}
\newtheorem{defn}[secn]{Definition}
\newtheorem{ex}[secn]{Example}
\newtheorem{rem}[secn]{Remark}

\numberwithin{equation}{secn}
\numberwithin{figure}{section}


\begin{document}

\title{Maps between Boundaries of Relatively Hyperbolic Groups}

\author{Abhijit Pal}
\address{Department of Mathematics and Statistics, Indian Institute of Technology, Kanpur, India}
\email{abhipal@iitk.ac.in}

\author{Rana Sardar}
\address{Department of Mathematics and Statistics, Indian Institute of Technology, Kanpur, India}
\email{ranasardar20@iitk.ac.in}

\subjclass[2000]{}
\date{}
\dedicatory{}
\keywords{Quasi-M\"{o}bius Maps, Relatively Hyperbolic Groups} 
\thanks{The first author was partially supported by MATRICS Research Grant MTR/2021/000194 }


\begin{abstract}

F. Paulin proved that if the Gromov boundaries of two hyperbolic groups are quasi-M\"{o}bius equivalent, then the groups themselves are quasi-isometric. The goal of this article is to extend Paulin’s result to the setting of relatively hyperbolic groups by introducing the notion of relative quasi-M\"{o}bius maps between the Bowditch boundaries of relatively hyperbolic groups. We show that any coarsely cusp-preserving quasi-isometry between two relatively hyperbolic groups induces a homeomorphism between their Bowditch boundaries, and that this induced homeomorphism is relative quasi-M\"{o}bius and linearly distorts the exit points of bi-infinite geodesics into combinatorial horoballs.
Conversely, we prove that if a homeomorphism between the Bowditch boundaries of two relatively hyperbolic groups preserves parabolic fixed points and is either relative quasi-M\"{o}bius or linearly distorts the exit points of bi-infinite geodesics into combinatorial horoballs, then it arises from a coarsely cusp-preserving quasi-isometry between the groups.

\end{abstract}


\fontfamily{ptm}\selectfont
\maketitle 
\small{\begin{center}{AMS Subject Classification : 20F65, 20F67, 20E08}\end{center}}


\section{Introduction}
In 1987, M. Gromov \cite{Gro87} introduced the notion of hyperbolic metric spaces. For $\delta\geq 0$, a geodesic metric space is said to be $\delta$-hyperbolic if, for any geodesic triangle $\triangle$, each side of $\triangle$ is contained in the closed $\delta$-neighbourhood of the union of the other two sides. A metric space is called hyperbolic if it is $\delta$-hyperbolic  for some $\delta\geq 0$.
The Gromov boundary of a proper hyperbolic metric space provides a compactification of the space by assigning a `boundary' at infinity. A quasi-isometry between two proper hyperbolic metric spaces induces a homeomorphism between their Gromov boundaries. However, the converse is not true.  There exist two hyperbolic groups, due to Pansu, Bourdon and Pajot, with homeomorphic Gromov boundaries, but they are not quasi-isometric to each other.
Paulin \cite{Pau96} proved that if the boundaries of two hyperbolic groups are homeomorphic and quasi-M\"{o}bius (or quasiconformal) equivalent, then they are quasi-isometric to each other.
Let $\partial X$ denote the Gromov boundary of a proper $\delta$-hyperbolic metric space $X$. 
Corresponding to a geodesic triangle $\triangle (a,b,c)$ with vertices $a,b,c$ in $X \cup \partial X$, there exists a point $p_{abc} \in X$, called a quasi-centre, such that the distance of $p_{abc}$ to each side of $\triangle (a,b,c)$ is uniformly bounded and the bound depends only on the hyperbolic constant $\delta$.  The definition of a quasi-M\"{o}bius equivalence was given by Paulin \cite{Pau96} in terms of `cross-ratios'.
Given any four distinct points $a,b,c,d \in \partial X$, a cross-ratio $[a,b,c,d]$ roughly captures the distance $d_X(p_{abc},p_{acd})$  with some error, given in terms of  $\delta$ (see \autoref{lem_cross_ratios_vs_quasi_centres}). A  quasi-M\"{o}bius  map $f:\partial X\to \partial Y$ is a bijection that coarsely preserves the cross-ratios $[a,b,c,d]$ and $[f(a),f(b),f(c),f(d)]$ (see \autoref{defn_QM}). 
 In \cite{CCM2019}, Charney, Cordes, and Murray generalized Paulin's results to geodesic metric spaces admitting a geometric group action and having Morse boundary with at least three points.

The notion of relatively hyperbolic groups was proposed by Gromov and initially developed by Farb and Bowditch \cites{Gro87, Farb, Bowditch}.  We refer the reader to see the article \cite{Hruska} by Hruska for various equivalent definitions of relatively hyperbolic groups. In this article, we take the definition of relatively hyperbolic groups given by Groves and Manning \cite{GM2008}. 
Let $G$ be a finitely generated group, and let $\{H_1, H_2, \dots, H_n\}$ be a finite collection of finitely generated, infinite subgroups of infinite index in $G$. 
Choose a finite generating set $S$ of $G$ such that, for each $i=1,\dots,n$, the intersection $H_i \cap S$ generates $H_i$.
Let $X$ be the Cayley graph of $G$ with respect to $S$ and $\mathcal{H}_G$ be the collection of all left cosets of $H_1, H_2,\dots, H_n$.
For each coset $H\in \mathcal{H}_G$, the combinatorial horoball $H^h$ (defined in \cite{GM2008})  is analogous to a horoball in the hyperbolic space $\mathbb{H}^n$. We say $(G,\mathcal{H}_G)$  is a $\delta$-relatively hyperbolic group, or that $G$ is $\delta$-hyperbolic relative to the collection $\mathcal{H}_G$, if the space $X^h$ obtained by attaching combinatorial horoballs $H^h$ to each $H\in \mathcal{H}_G$ is a $\delta$-hyperbolic metric space. The pair $(G,\mathcal{H}_G)$ is called a relatively hyperbolic group if it is $\delta$-relatively hyperbolic for some $\delta\geq 0$. The Gromov boundary $\partial X^h$ of $X^h$ was studied by Bowditch \cite{Bowditch}, and called the Bowditch boundary of the relatively hyperbolic group $(G, \mathcal{H}_G)$. Observe that in our definition of a relatively hyperbolic group $(G,\mathcal{H}_G)$, each subgroup in $\{H_1,\ldots,H_n\}$ is assumed to have infinite index in $G$. Consequently, the Bowditch boundary has at least three points, and thus the group is non-elementary in the sense of our definition. The set of limit points of a set $H\in\mathcal{H}_G$ in $\partial X^h$ is a singleton (follows from Lemma 5.6, Lemma 3.11 of \cite{GM2008}). A point $p \in \partial X^h$ is said to be a parabolic endpoint if there exists $H_{p}\in \mathcal{H}_G$ such that the limit point of $H_{p}$ is $p$. 

Quasisymmetric maps between two metric spaces are generalizations of bi-Lipschitz maps and control the ratios of distances between pairs of points. In several cases, quasi-M\"{o}bius equivalence is the same as quasisymmetric equivalence. Mackay and Sisto, in \cite{Mackay-Sisto}, proved that a shadow respecting quasisymmetric homeomorphism between Bowditch boundaries of relatively hyperbolic groups induces a coarsely cusp-preserving quasi-isometry (see \autoref{defn_cusp_preserving}) between the relatively hyperbolic groups and vice versa (see \cite{Mackay-Sisto}*{Corollary 1.3}). In a similar theme, Hruska-Healy \cite{Healy-Hruska}
independently proved that a coarsely cusp-preserving quasi-isometry between relatively hyperbolic groups induces a quasi-isometry between cusped spaces and a quasisymmetric map between their Bowditch boundaries. 
This article aims to generalize  Paulin's idea in \cite{Pau96} to relatively hyperbolic groups. We will give two criteria to show that a homeomorphism between Bowditch boundaries of relatively hyperbolic groups induces a quasi-isometry between relatively hyperbolic groups. The first criterion is essentially the generalizations of Paulin's results to the setting of relatively hyperbolic groups, while the second criterion is based on the internal geometry of the groups. For the first criterion, we have introduced the terms `relative cross-ratio' and `relative quasi-M\"{o}bius maps', and for the second criterion, we have introduced the notion of `linear distortion of exit points'. 

There exist hyperbolic and relatively hyperbolic groups with homeomorphic boundaries that are not quasi-isometric to each other. Let $\mathbb{H}^4_{\mathbb R}$ denote the 4-dimensional real hyperbolic space and $\mathbb{H}^2_{\mathbb C}$ denote the 2-dimensional complex hyperbolic space. The Gromov boundaries  $\partial\mathbb{H}^4_{\mathbb R}$ and $\partial\mathbb{H}^2_{\mathbb C}$ are homeomorphic to $\mathbb S^3$, but their conformal dimensions are different. Hence, $\mathbb{H}^4_{\mathbb R}$ and $\mathbb{H}^2_{\mathbb C}$ are not quasi-isometric (see \cite{Mackay-Tyson}*{Theorem 3.2.13, Theorem 3.3.3}). If $G_1$ is the fundamental group of a closed real hyperbolic $4$-manifold and $G_2$ is the fundamental group of a closed complex hyperbolic $2$-manifold, then $G_1$ and $G_2$ act properly discontinuously and cocompactly by isometries on $\mathbb{H}^4_{\mathbb R}$ and $\mathbb{H}^2_{\mathbb C}$, respectively. By the Milnor-\v{S}varc Lemma, it follows that \(G_1\) is quasi-isometric to $\mathbb{H}^4_{\mathbb R}$ and \(G_2\) is quasi-isometric to $\mathbb{H}^2_{\mathbb C}$. Hence, the groups $G_1$ and $G_2$ are hyperbolic, and their Gromov boundaries \(\partial G_1\) and \(\partial G_2\) are homeomorphic to $\partial\mathbb{H}^4_{\mathbb R}$ and $\partial\mathbb{H}^2_{\mathbb C}$, respectively. Although the Gromov boundaries of $G_1$ and $G_2$ are homeomorphic to \(\mathbb{S}^3\), the groups are not quasi-isometric. 
Given any $k\geq 1$, Kolpakov and Martelli \cite{Kolpakov-Martelli} constructed infinitely many real hyperbolic $4$-manifolds with $k$ many cusps. Thus, given a finite-volume complex hyperbolic manifold  $M_\mathbb{C}^2$ of dimension two, one can construct a real hyperbolic manifold $N_\mathbb{R}^4$ of dimension four such that $M_\mathbb{C}^2$ and $N_\mathbb{R}^4$ have the same number of cusps. The fundamental groups $\pi_1(M_\mathbb{C}^2)$ and $\pi_1(N_\mathbb{R}^4)$ are two relatively hyperbolic groups acting properly discontinuously by isometries on $\mathbb{H}^2_{\mathbb C}$ and $\mathbb{H}^4_{\mathbb R}$, respectively. The Bowditch boundaries of $\pi_1(M_\mathbb{C}^2)$ and $\pi_1(N_\mathbb{R}^4)$ are $\partial\mathbb{H}^2_{\mathbb C}$ and $\partial\mathbb{H}^4_{\mathbb R} $, respectively, and they are homeomorphic to $\mathbb{S}^3$. There does not exist any cusp-preserving quasi-isometry between $\pi_1(M_\mathbb{C}^2)$ and $\pi_1(N_\mathbb{R}^4)$, since any such quasi-isometry would extend to a quasisymmetric homeomorphism between $\partial\mathbb{H}^2_{\mathbb C}$ and  $\partial\mathbb{H}^4_{\mathbb R} $ (see \cite{Mackay-Sisto}, \cite{Healy-Hruska}), contradicting the fact that their conformal dimensions differ.
Another relevant result towards this direction is by R. Schwartz, who showed that in a rank-one Lie group $G\neq \mathrm{Isom}(\mathbb{H}^2)$, two non-uniform lattices are quasi-isometric if and only if they are commensurable \cite{Schwartz}*{Corollary 1.3}. This implies that there are finite-volume cusped real hyperbolic $3$-manifolds whose fundamental groups are not quasi-isometric, despite their Bowditch boundaries being topologically the same as the unit sphere $\mathbb S^2$. Thus, by \autoref{thm_main_theorem_1}, there does not exist any relative quasi-M\"{o}bius map between their Bowditch boundaries.

Let $(G,\mathcal{H}_G)$ be a relatively hyperbolic group with Cayley graph $X$ and cusped space $X^h$. Let $a, b, c_H\in \partial X^h$ be three distinct points, where $c_H$ is a parabolic endpoint associated to a horosphere $H \in \mathcal{H}_G$. The relative cross-ratio of $a,b,c_H$ is defined by 
\[[a,b,c_H] \coloneqq \sup \liminf_{i\to\infty} \frac{1}{2} \left\{d_{X^h}(a_i,H)+d_{X^h}(b_i,H)-d_{X^h}(a_i,b_i)\right\},\] 
where the supremum is taken over all sequences $a_i,b_i \in X^h$ such that $a_i \to a$ and $b_i \to b$, and $d_{X^h}(x,H)$ denotes the distance from a point $x \in X^h$ to $H$.  The absolute value of the relative cross-ratio $[a,b,c_H]$ coarsely measures the distance between a quasi-centre of the triple $(a,b,c_H)$ and $H$.

Let $(G_1,\mathcal{H}_{G_1})$ and $(G_2,\mathcal{H}_{G_2})$ be two relatively hyperbolic groups with associated cusped spaces $X^h$ and $Y^h$, respectively. A quasi-M\"{o}bius map $f:\partial X^h\to\partial Y^h$ that preserves parabolic endpoints is called relative quasi-M\"{o}bius if, for any distinct $a,b,c\in \partial X^h$ with $c$ parabolic, the quantities $|[a,b,c]|$ and $|[f(a),f(b),f(c)]|$ are coarsely comparable (see \autoref{defn_relative_quasi-Mobius}, \autoref{notation_coarsely_comparable}). The following theorem states that a coarsely cusp-preserving quasi-isometry induces a relative quasi-M\"{o}bius map (the proof follows from \autoref{thm_qi_implies_qm}, \autoref{thm_cusp_preserving_qi_imples_rel_QM}).

\begin{thmA}\label{thm_main_theorem_1}
    Suppose the numbers $\delta, K,\epsilon \geq 0$ and $\lambda\geq 1$ are given. Let $(G_1,\mathcal{H}_{G_1})$ and $(G_2,\mathcal{H}_{G_2})$ be two $\delta$-relatively hyperbolic groups. 
    If $\varphi: G_1 \to G_2$ is a $K$-coarsely cusp-preserving $(\lambda,\epsilon)$-quasi-isometry, then its boundary extension map $\partial\varphi^h:\partial X^h \to \partial Y^h$ is a relative $\psi$-quasi-M\"{o}bius map for some distortion function $\psi :[0,\infty) \to [0,\infty)$. Moreover, $\psi(t)$ is given by $\psi(t) = At + B$ for some constants $A$ and $B$ depending on $\delta, \lambda, \epsilon$ and $K$.
\end{thmA}

Left multiplication by any element of a finitely generated group induces an isometry of its Cayley graph, which is a $(1,0)$-quasi-isometry. In view of \autoref{thm_main_theorem_1}, there exists a continuous map $\psi :[0,\infty)\to [0,\infty)$ such that for any element $g$ of a relatively hyperbolic group $G$, the left multiplication $L_g$ induces a $\psi$-quasi-M\"{o}bius map $\partial L_g^h:\partial X^h\to\partial X^h$. Thus, the action of a relatively hyperbolic group on itself induces a uniform quasi-M\"{o}bius action on its Bowditch boundary.

We state below the converse of \autoref{thm_main_theorem_1}, whose proof is obtained from  \autoref{thm_QM_implies_qi_on_cusped_spaces}, \autoref{thm_rel_QM_implies_cusp_preserving_qi_on_cusped_spaces}, \autoref{prop_boundaries_coincide_1}, \autoref{thm_cusp_preserving_qi_implies_ambient_qi} and \autoref{prop_boundaries_coincide}
.
\begin{thmA}\label{thm_main_theorem_2}
    Let $(G_1,\mathcal{H}_{G_1})$ and $(G_2,\mathcal{H}_{G_2})$ be two $\delta$-relatively hyperbolic groups for some $\delta\geq 0$.
    Let $f:\partial X^h\to\partial Y^h$ be a relative $\psi$-quasi-M\"{o}bius map, for some distortion function $\psi :[0,\infty) \to [0,\infty)$, which preserves parabolic endpoints. Then there exists a quasi-isometry $\varphi_f: G_1 \to G_2$ such that:
    \begin{enumerate}[$(i)$]
        \item The quasi-isometry constants of  $\varphi_f$ depend only on $\delta$ and $\psi$.
        \item $\varphi_f$ is $K$-coarsely cusp-preserving, for some $K \ge 0$  that depends on $\delta$ and $\psi$.
        \item The homeomorphism $\partial\varphi_f:\partial X^h\to \partial Y^h$ induced by $\varphi_f$ coincides with $f$.
    \end{enumerate}
\end{thmA}

Every hyperbolic group is relatively hyperbolic with respect to the empty collection of subgroups. In the absence of combinatorial horoballs, a relative quasi-M\"obius map reduces to a quasi-M\"obius map, and all results of \autoref{subsec_coarse_denseness_of_quasi_centres} and \autoref{subsec_proof_of_thm_2} apply verbatim, with the sole exception of \autoref{prop_a_point_on_geodesic_vs_QP}. In place of \autoref{prop_a_point_on_geodesic_vs_QP}, we refer the reader to \autoref{rem_a_point_vs_QP}, which provides a brief argument in the setting of hyperbolic groups. As a consequence, we recover Paulin’s quasi-isometric classification of hyperbolic groups in terms of their Gromov boundaries. Moreover, Paulin \cite{Pau96} showed that a quasi-M\"obius map is, in fact, a homeomorphism. 

\begin{corA}[\cite{Pau96}]
    Let $G_1$ and $G_2$ be non-elementary hyperbolic groups. Any quasi-isometry $\varphi \colon G_1 \to G_2$ induces a quasi-M\"obius map $\partial\varphi \colon \partial G_1 \to \partial G_2$. Conversely, given any quasi-M\"obius map $f \colon \partial G_1 \to \partial G_2$,  there exists a quasi-isometry $\varphi_f \colon G_1 \to G_2$ such that \(\partial \varphi_f = f\).
\end{corA}

Next, we give the second criterion for getting a quasi-isometry from a homeomorphism between the Bowditch boundaries of relatively hyperbolic groups, using exit points of bi-infinite geodesics to combinatorial horoballs. Consider the set 
\[
\Theta(G)=\{(a,b) \mid a,b \in \partial X^h, \text{ $a$ is a parabolic endpoint and } a \ne b \}.
\]
Given a pair $(a,b)\in \Theta(G)$, for each bi-infinite geodesic from $a$ to $b$ in the cusped space $X^h$, there exists a last exit point to $H_a \in \mathcal{H}_G$ (see \autoref{subsec_exit_points}). Let $\mathcal{E}(a,b)$ denote the set of all such last exit points in $H_a$. The set $\mathcal{E}(a,b)$ has uniformly bounded diameter, independent of the choice of $(a,b)$. 
For each $(a,b) \in \Theta(G)$, let $\vartheta_{G}:\Theta(G) \to G$ be defined by sending $(a,b)$ to a point in $\mathcal{E}(a,b)$.
Consider two $\delta$-relatively hyperbolic groups $(G_1,\mathcal{H}_{G_1})$ and $(G_2,\mathcal{H}_{G_2})$. Let $f:\partial X^h \to \partial Y^h$ be a homeomorphism such that both $f$ and $f^{-1}$ preserve parabolic endpoints. We say that $f:\partial X^h\to\partial Y^h$ \emph{linearly distorts exit points} if  for all $(a,b),(c,d)\in \Theta(G_1)$, the distances $d_{G_1}(\vartheta_{G_1}(a,b),\vartheta_{G_1}(c,d))$ and $d_{G_2}(\vartheta_{G_2}(f(a),f(b)),\vartheta_{G_2}(f(c),f(d)))$ are `coarsely comparable' (see \autoref{notation_coarsely_comparable}).

\begin{thmA}\label{thm_main_theorem_3} 
    Let $\delta, K,\epsilon \geq 0$ and $\lambda\geq 1$ be given. Let $(G_1,\mathcal{H}_{G_1})$ and $(G_2,\mathcal{H}_{G_2})$ be two $\delta$-relatively hyperbolic groups. If $\phi: G_1\to G_2$ is a $K$-coarsely cusp-preserving $(\lambda,\epsilon)$-quasi-isometry,  then its boundary extension map $\partial\phi^h:\partial X^h\to\partial Y^h$ is a homeomorphism which linearly distorts exit points, and the constants of the distortion depend only on $K,\delta,\lambda$ and $\epsilon$.
\end{thmA}

\begin{thmA}\label{thm_main_theorem_4}
    Let $(G_1,\mathcal{H}_{G_1})$ and $(G_2,\mathcal{H}_{G_2})$ be two $\delta$-relatively hyperbolic groups for some $\delta\ge 0$. Let $f:\partial X^h\to\partial Y^h$ be a homeomorphism preserving parabolic endpoints. If $f$ linearly distorts exit points, then there exists a quasi-isometry $\phi_f: G_1\to G_2$ such that:
    \begin{enumerate}[$(i)$]
        \item The quasi-isometry constants of  $\phi_f$ depend only on $\delta$ and the constants of distortion.
        
        \item $\phi_f$ is $K$-coarsely cusp-preserving for some $K \ge 0$  that depends on $\delta$ and the constants of distortion. 
        
        \item The homeomorphism $\partial\phi_f:\partial X^h\to \partial Y^h$ induced by $\phi_f$ coincides with $f$.
    \end{enumerate}
\end{thmA}

In view of \autoref{thm_main_theorem_1}, \autoref{thm_main_theorem_2}, \autoref{thm_main_theorem_3}, \autoref{thm_main_theorem_4} and \cite{Mackay-Sisto}*{Corollary 1.3}, we have the following equivalence. 
\begin{corA}
    Let $(G_1,\mathcal{H}_{G_1})$ and $(G_2,\mathcal{H}_{G_2})$ be two relatively hyperbolic groups, and let $f: \partial X^h \to \partial Y^h$ be a homeomorphism. Then the following are equivalent:
    \begin{enumerate}[$(i)$]
        \item $f$ is relative quasi-M\"{o}bius,
        \item $f$ distorts exit points linearly,
        \item $f$ is a shadow-respecting quasisymmetry.  
    \end{enumerate}
\end{corA}


\subsection{Outline of the Paper}
\autoref{sec_Preliminaries} contains the necessary preliminaries on hyperbolic and relatively hyperbolic groups. In \autoref{sec_QC_and_QM}, we define \emph{quasi-centres}, \emph{cross-ratios}, and \emph{quasi-M\"{o}bius maps}, and recall Paulin's results \cite{Pau96} for hyperbolic metric spaces (or groups). \autoref{sec_QM_and_QI} is devoted to extending these results to the setting of relatively hyperbolic groups. In particular, in \autoref{subsec_rel_QM}, we introduce the notions of \emph{relative cross-ratio} and \emph{relative quasi-M\"{o}bius maps}. \autoref{thm_main_theorem_1} is proved in \autoref{subsec_proof_of_thm_1}, while \autoref{thm_main_theorem_2} is proved in \autoref{subsec_proof_of_thm_2}. Finally, in \autoref{sec_linear_distortion_and_rel_hyp}, we introduce the concept of linear distortion of exit points of a homeomorphism preserving parabolic endpoints, and establish \autoref{thm_main_theorem_3} and \autoref{thm_main_theorem_4}.

\subsection{Notations.}\label{Notations}
\begin{itemize}
    \item Let $A$ and $B$ be two real numbers. 
\begin{itemize}
    \item $A \approx_K B$ means $B-K \le A \le B + K$.
    \item $A \precapprox_{\lambda,K} B$ means $A \le \lambda B + K$.
    \item  $A \precapprox B$ means $A \precapprox_{\lambda,K} B$ for some $\lambda \ge 1$, $K \ge 0$.
    \item $A \approxeq_{\lambda,K} B$ means $A \precapprox_{\lambda,K} B$ and $B \precapprox_{\lambda,K} A$. 
    \item\label{notation_coarsely_comparable} $A$ and $B$ coarsely comparable or  $A \approxeq B$ means $A \approxeq_{\lambda,K} B$ for some $\lambda \ge 1, K\geq 0$.
\end{itemize}
\item We will occasionally use the notation $fx$ for $f(x)$ when needed.
\item $N_K(Z)$ will denote the closed $K$-neighbourhood of a set $Z$ in a metric space.
\item For $x,y\in X\cup\partial X$, where $X$ is a proper hyperbolic metric space, we write $[x,y]$ for a geodesic joining $x$ and $y$.
\end{itemize}

\subsection{Acknowledgements.} The authors sincerely thank the anonymous referee(s) for their careful review and valuable comments, which significantly improved the exposition and clarity of this article.


\section{Preliminaries } \label{sec_Preliminaries} 

\subsection{Hyperbolic Metric Spaces}


    In a metric space $(X, d_X)$, a \emph{geodesic} between two points $x$ and $y$, denoted by $[x, y]$, is a continuous map $\gamma : [0, d_X(x, y)] \to X$ such that $\gamma(0) = x$, $\gamma(d_X(x, y)) = y$, and $d_X(\gamma(s),\gamma(t))=|s-t| \text{ for all } s,t\in[0,d_X(x,y)]$. The space $(X, d_X)$ is called a \emph{geodesic metric space} if, for every pair of points $x, y \in X$, there exists a geodesic $[x, y]$ joining them. A \emph{geodesic triangle} in $X$ consists of three points $a, b, c \in X$ (called \emph{vertices}) and three geodesic segments $[a, b]$, $[b, c]$, and $[c, a]$ (called \emph{sides}) joining them. A geodesic triangle with vertices $a, b, c \in X$ is denoted by $\triangle(a, b, c)$.

\begin{defn}
    Let $(X,d_X)$ be a geodesic metric space.
    \begin{itemize}        
        \item Slim triangle: Let $\delta \ge 0$. Given points $a, b, c \in X$, a geodesic triangle $\triangle(a, b, c)$ is said to be $\delta$-slim if each side of the triangle is contained in the closed $\delta$-neighbourhood of the union of the other two sides.
        
        \item Hyperbolic metric space: A geodesic metric space $(X, d_X)$ is said to be $\delta$-hyperbolic for some $\delta \ge 0$ if all geodesic triangles in $X$ are $\delta$-slim. A geodesic metric space is said to be hyperbolic if it is $\delta$-hyperbolic for some $\delta \ge 0$.
        
        \item Hyperbolic group: A finitely generated group $G$ is said to be hyperbolic if its Cayley graph with respect to some finite generating set is a hyperbolic metric space. 
    \end{itemize}
\end{defn}

\begin{defn}[Quasi-isometry] \label{defn_qi}
    Let $\lambda \geq 1$ and $\epsilon \geq 0$. A map $f: X \to Y$ (which may not be continuous) between two metric spaces $(X,d_X)$ and $(Y,d_Y)$ is said to be a $(\lambda, \epsilon)$-quasi-isometric embedding if $d_Y(f(x_1),f(x_2)) \approxeq_{\lambda,\epsilon} d_X(x_1,x_2)$ for all $x_1,x_2 \in X$.
    Moreover, if $f$ is coarsely surjective; that is, for every $y \in Y$ there exists an $x\in X$ satisfying $d_Y(f(x),y) \le \epsilon$, then $f$ is said to be a $(\lambda, \epsilon)$-quasi-isometry. Two metric spaces $X$ and $Y$ are said to be quasi-isometric if there exists a quasi-isometry between them. 
\end{defn}

Note that in \autoref{defn_qi}, we have slightly deviated from the customary definition of a quasi-isometric embedding, which is usually stated as 
\[
\frac{1}{\lambda'}d_X(x_1,x_2)-\epsilon' \le d_Y(f(x_1),f(x_2)) \le  \lambda' d_X(x_1,x_2)+\epsilon',
\] 
for all $x_1,x_2 \in X$. This modification is made to introduce a convenient shorthand notation for a quasi-isometric embedding. However, by adjusting the constants, both definitions are equivalent.

\begin{defn}[Quasigeodesic] 
    Let $\lambda \geq 1$ and $\epsilon \geq 0$. A $(\lambda, \epsilon)$-quasigeodesic in a metric space $X$ is a $(\lambda, \epsilon)$-quasi-isometric embedding $\gamma: I \to X$, where $I \subseteq \mathbb{R}$ is an interval (not necessarily closed or bounded). A $(\lambda, \lambda)$-quasigeodesic is also called a $\lambda$-quasigeodesic.
\end{defn}
 
\begin{ex}\label{ex_(3,0)_quasigeodesic}
    Let $[a,b]$ be a geodesic segment in a metric space $X$, and let $z \in [a,b]$ be a nearest point projection of a point $p \in X$ onto $[a,b]$. Then the paths $[p,z] \cup [z,a]$ and $[p,z] \cup [z,b]$ are $(3,0)$-quasigeodesics.
\end{ex}

In a hyperbolic metric space, any geodesic and any quasigeodesic with the same endpoints remain within a uniformly bounded neighbourhood of each other. This property is known as the Morse Lemma, or the stability of quasigeodesics. As a consequence, hyperbolicity is invariant under quasi-isometries, which ensures that the notion of a hyperbolic group is well defined regardless of the choice of finite generating set. Below, we state the precise form of the stability of quasigeodesics, including the involved constants.

\begin{prop}[Stability of quasigeodesics, \cite{BH99}] \label{Prop_Stability_of_quasigeodesics}
    For all $\delta, \epsilon \ge 0$ and $\lambda \ge 1$, there exists a constant $K_0 = K_0(\delta, \epsilon, \lambda) \ge 0$ such that the following holds: If $X$ is a $\delta$-hyperbolic metric space and $\gamma$ is a $(\lambda, \epsilon)$-quasigeodesic in $X$ with endpoints $x, y \in X$, then any geodesic joining $x$ and $y$ and the quasigeodesic $\gamma$ lie within the $K_0$-neighbourhood of each other.
\end{prop} 

It is a standard fact that in a hyperbolic metric space, if a geodesic stays outside a sufficiently large neighbourhood of a quasi-convex set, then the set of nearest-point projections of the geodesic onto the quasi-convex set has uniformly bounded diameter, and the bound depends only on the hyperbolicity constant (see \cite{arzhan}*{Corollary 7.2}). In particular, if two geodesics are a little far away, then we have the following result.

\begin{lem}[Bounded projection lemma] \label{lem_Bounded_projection_lemma}
    Given $\delta \ge 0$, there exist constants $P = P(\delta) \ge 0$ and $K_1 = K_1(\delta) \ge 0$ such that the following holds: Let $\gamma$ be a geodesic in a $\delta$-hyperbolic metric space $X$. If $\alpha$ is another geodesic in $X$ lying outside the $P$-neighbourhood of $\gamma$, then the set of nearest-point projections of $\alpha$ onto $\gamma$ has diameter at most $K_1$.
\end{lem}


\subsection{Gromov Boundary of a Hyperbolic Metric Space}

A metric space $(X, d_X)$ is said to be proper if every closed ball in $X$ is compact. Throughout, let $(X, d_X)$ be a proper hyperbolic metric space.

\begin{defn}[Gromov product]
   Let $a, b, w \in X$. The Gromov product of $a$ and $b$ with respect to $w$ is defined by $$(a,b)_w = \frac{1}{2}(d_X(a,w) + d_X(b,w) - d_X(a,b)).$$
\end{defn} 

Two geodesic rays $\gamma_1, \gamma_2 : [0, \infty) \to X$ are said to be asymptotic if $\sup\{d_X(\gamma_1(t),\gamma_2(t))\,:\, t \ge 0\}$ is finite. Being asymptotic defines an equivalence relation on the set of geodesic rays in $X$. The equivalence class of a geodesic ray $\gamma$ is denoted by $[\gamma]$. 

\begin{defn}[Gromov boundary] 
    The Gromov boundary of $X$ is defined to be
    $$\partial X \coloneqq \{[\gamma] \mid \gamma:[0,\infty)\to X \text{ is a geodesic ray}\}.$$
\end{defn}

Fix a basepoint $w \in X$. For any $a \in \partial X$ and $r \geq 0$, define the set 
\begin{gather*}
    V(a,r)\coloneqq\{ b\in\partial X \mid \liminf_{t\to\infty} (\gamma_1(t),\gamma_2(t))_w \ge r, \text{ for some geodesic rays $\gamma_1,\gamma_2$ with } \\ [\gamma_1]=a,\; [\gamma_2]=b,\; \gamma_1(0)=\gamma_2(0)=w \}
\end{gather*} 
Let $\overline{X} = X \cup \partial X$. We equip $\overline{X}$ with the topology defined in \cite{BH99}*{Definition 3.5}, in which the family $\{V(a,r) \mid r \ge 0\}$ forms a basis of neighbourhoods at $a \in \partial X$.
This topology on $\overline{X}$ is independent of the choice of basepoint $w$ (see \cite{BH99}*{Proposition 3.7}). The boundary $\partial X$ is visible; that is, any pair of distinct points in $\partial X$ can be joined by a bi-infinite geodesic (see \cite{BH99}*{Chapter III.H, Lemma 3.2}). Moreover, any two geodesics in $X$ with the same endpoints in $\overline{X}$ lie within a $2\delta$-neighbourhood of each other (see \cite{BH99}*{Chapter III.H, Lemma 3.3}). 
For a hyperbolic group $G$, the Gromov boundary $\partial G$ is defined to be the Gromov boundary of any Cayley graph of $G$ with respect to a finite generating set. This is well-defined up to homeomorphism, since the Gromov boundary is invariant under quasi-isometry (see the following theorem).

\begin{thm}[Quasi-isometry invariants, \cite{BH99}]
    Let $X$ and $Y$ be proper $\delta$-hyperbolic geodesic metric spaces for some $\delta \geq 0$. If $\varphi: X \to Y$ is a quasi-isometric embedding, then $\varphi$ induces a continuous injective map $\partial \varphi: \partial X \to \partial Y$, given by $[\gamma] \mapsto [\varphi \circ \gamma]$. Moreover, if $\varphi: X \to Y$ is a quasi-isometry, then $\partial \varphi$ is a homeomorphism.
\end{thm}


\subsection{Relatively Hyperbolic Groups and Boundaries}

In this subsection, we introduce relatively hyperbolic groups in the sense of Groves and Manning \cite{GM2008}, together with the notions of coarsely cusp-preserving maps and Bowditch boundaries. We also recall several important results, including the visual boundedness of horoballs and the fact that coarsely cusp-preserving quasi-isometries between relatively hyperbolic groups induce homeomorphisms between their Bowditch boundaries.

\begin{defn}[Combinatorial horoball, \cite{GM2008}] \label{defn_horoball}
    Let $H$ be a locally finite graph with vertex set $V(H)$ and all edges of length one. Let $d_H$ denote the associated path metric on $H$. The combinatorial horoball based on $H$, denoted by $\mathcal{H}(H,d_H)$, is the graph defined as follows:
    (1)  The vertex set is $\mathcal{H}^{(0)} :=  V(H) \times \{\mathbb{N}\cup\{0\}\}$. \\ 
    (2) The edge set $\mathcal{H}^{(1)}$ consists of two types of edges:
    \begin{itemize}
        \item Vertical edges: For each $n \in \mathbb{N}\cup \{0\}$ and $x \in V(H)$, there is an edge between $(x, n)$ and $(x, n+1)$. 

        \item Horizontal edges: For each $x, y \in V(H)$,
        \begin{itemize}
            \item if \(e\) is an edge of $H$ joining \(x\) to \(y\), then there is a corresponding edge \(\overline{e}\) connecting \((x,0)\) to \((y,0)\), 

            \item  for each $n \in \mathbb{N}$, if $0 < d_H(x, y) \leq 2^n$, then there is a single edge between $(x, n)$ and $(y, n)$. 
        \end{itemize}
    \end{itemize}    
\end{defn}

Consider all the edges of $\mathcal{H}(H,d_H)$ to have unit length, and equip the graph with the induced path metric. With this metric, $\mathcal{H}(H, d_H)$ is a hyperbolic metric space (see \cite{GM2008}). For convenience, we henceforth denote this hyperbolic metric space by $(H^h, d_{H^h})$ and refer to it simply as a horoball, instead of a combinatorial horoball.


\begin{defn}[Relatively hyperbolic group] \label{defn_rel_hyp}   
    Let $G$ be a finitely generated group, and let $\{H_1, H_2, \dots, H_n\}$ be a finite collection of finitely generated, infinite subgroups of infinite index in $G$. Choose a finite generating set $S$ of $G$ such that, for each $i=1,\dots,n$, the intersection $H_i \cap S$ generates $H_i$. Let $X$ be the Cayley graph of $G$ with respect to the generating set $S$, and let $\mathcal{H}_G$ denote the collection of all left cosets of the subgroups $H_1, \dots, H_n$.
    We say that $G$ is $\delta$-hyperbolic relative to $\{H_1, H_2, \dots, H_n\}$, or simply that $(G, \mathcal{H}_G)$ is a $\delta$-relatively hyperbolic group, if the cusped space $X^h$, obtained by attaching a combinatorial horoball $H^h$ to each coset $H \in \mathcal{H}_G$, is a $\delta$-hyperbolic metric space. The group $G$ is said to be hyperbolic relative to $\{H_1, H_2, \dots, H_n\}$ if there exists $\delta \geq 0$ such that $(G, \mathcal{H}_G)$ is a $\delta$-relatively hyperbolic group.  
    The elements of $\mathcal{H}_G$ are referred to as horospheres, and the conjugates of the subgroups $H_1, \dots, H_n$ are called the parabolic subgroups of $G$.
\end{defn}


Let $(G, \mathcal{H}_G)$ be a $\delta$-relatively hyperbolic group with a Cayley graph $X$, and let $X^h$ denote the cusped space of $X$. Suppose $H = gH_i \in \mathcal{H}_G$ is a left coset and $a,b \in H$. We denote by $d_H(a,b)$ the distance between $a$ and $b$ measured in the word metric on $H_i$. 

Next, we describe the notion of ‘preferred paths’, introduced by Groves and Manning \cite{GM2008}, in a horoball. 
 Suppose $x$ and $y$ are two points lying in the horoball $H^h$, for some $H \in \mathcal{H}_G$. Then there exist points $x_0, y_0 \in H$ and integers $n_1, n_2 \geq 0$ such that $x = (x_0, n_1)$ and $y = (y_0, n_2)$. 

If $x_0 = y_0$, then the vertical path 
\[\gamma_{x,y}\coloneqq [(x_0,n_1),(x_0,n_2)]\] 
joining $(x_0,n_1)$ and $(x_0,n_2)$ forms a geodesic in $X^h$. 
If $x_0 \ne y_0$, then there exists an integer $n \ge 0$ satisfying $2^{n-1} < d_H(x_0,y_0) \le 2^n$. Choose $N = \max\{n_1,n_2,n\}$ and consider the path 
\[\gamma_{x,y}\coloneqq[(x_0,n_1),(x_0,N)] \cup [(x_0,N),(y_0,N)] \cup [(y_0,N),(y_0,n_2)].\]
This path consists of (two) vertical segments and a horizontal segment at height $N$ (see \autoref{fig_preferred_path}).

\begin{figure}[H]
    \centering
    \begin{tikzpicture}
        \draw (-2,0) -- (2,0);
        \draw[gray] (-1.3,0) --(-.8,3) -- (.8,3) -- (1.3,0);
        \draw[gray] (-.8,3) -- (-.8,4);
        \draw[dashed,gray] (-.8,4) -- (-.8,5);
        \draw[gray] (.8,3) -- (.8,4);
        \draw[dashed,gray] (.8,4) -- (.8,5); 
        \draw[gray] (-.8,3.7) -- (.8,3.7);
        \draw[gray] (-.8,4.4) -- (.8,4.4);
        \draw[-stealth, gray] (-1.8,2) -- (-1.8,3);
        \draw[-stealth,gray] (-1.8,1.5) -- (-1.8,0);        
        \node at (-1.8,1.7) {$n$};
        \node at (0,3.3) {$1$};
        \node at (0,4) {$1$};
        \node at (0,4.7) {$1$};

        \node at (0,-.4) {$N = n_2 = n$};
        \draw[gray!20,line width=1.5 mm] (-1.13,1) --(-.8,3) -- (.8,3);
        \draw[thick] (-1.13,1) --(-.8,3) -- (.8,3);
        \filldraw (-1.13,1) circle (1pt);
        \filldraw (.8,3) circle (1pt);
        \node at (-.8,1) {$x$};
        \node at (1.1,3) {$y$};
    \end{tikzpicture} 
    \hspace{.3cm}
    \begin{tikzpicture}
        \draw (-2,0) -- (2,0);
        \draw[gray] (-1.3,0) --(-.8,3) -- (.8,3) -- (1.3,0);
        \draw[gray] (-.8,3) -- (-.8,4);
        \draw[dashed,gray] (-.8,4) -- (-.8,5);
        \draw[gray] (.8,3) -- (.8,4);
        \draw[dashed,gray] (.8,4) -- (.8,5); 
        \draw[gray] (-.8,3.7) -- (.8,3.7);
        \draw[gray] (-.8,4.4) -- (.8,4.4);
        \draw[-stealth, gray] (-1.8,2) -- (-1.8,3);
        \draw[-stealth,gray] (-1.8,1.5) -- (-1.8,0);
        \node at (-1.8,1.7) {$n$};
        \node at (0,3.3) {$1$};
        \node at (0,4) {$1$};
        \node at (0,4.7) {$1$};

        \node at (0,-.4) {$N = n_2 > n$};;
        \draw[gray!20,line width=1.5 mm] (-1.13,1) --(-.8,3) -- (-.8,3.7) -- (.8,3.7);
        \draw[thick] (-1.13,1) --(-.8,3) -- (-.8,3.7) -- (.8,3.7);
        \filldraw (-1.13,1) circle (1pt);
        \filldraw (.8,3.7) circle (1pt);
        \node at (-.8,1) {$x$};
        \node at (1.1,3.7) {$y$};
    \end{tikzpicture}
    \hspace{.3cm}
    \begin{tikzpicture}
        \draw (-2,0) -- (2,0);
        \draw[gray] (-1.3,0) --(-.8,3) -- (.8,3) -- (1.3,0);
        \draw[gray] (-.8,3) -- (-.8,4);
        \draw[dashed,gray] (-.8,4) -- (-.8,5);
        \draw[gray] (.8,3) -- (.8,4);
        \draw[dashed,gray] (.8,4) -- (.8,5); 
        \draw[gray] (-.8,3.7) -- (.8,3.7);
        \draw[gray] (-.8,4.4) -- (.8,4.4);
        \draw[-stealth, gray] (-1.8,2) -- (-1.8,3);
        \draw[-stealth,gray] (-1.8,1.5) -- (-1.8,0);
        \node at (-1.8,1.7) {$n$};
        \node at (0,3.3) {$1$};
        \node at (0,4) {$1$};
        \node at (0,4.7) {$1$};

        \node at (0,-.4) {$N = n > n_1,n_2$};
        \draw[gray!20,line width=1.5 mm] (-1.13,1) --(-.8,3) -- (.8,3) -- (1.005,1.8);
        \draw[thick] (-1.13,1) --(-.8,3) -- (.8,3) -- (1.005,1.8);
        \filldraw (-1.13,1) circle (1pt);
        \filldraw (1.005,1.8) circle (1pt);
        \node at (-.8,1) {$x$};
        \node at (1.3,1.8) {$y$};       
    \end{tikzpicture}
    \caption{}
    \label{fig_preferred_path}
\end{figure} 

In the first two cases illustrated in \autoref{fig_preferred_path}, the path $\gamma_{x,y}$ is a geodesic in the cusped space $X^h$. In the last case, although $\gamma_{x,y}$ may not be a geodesic, it remains uniformly close to one within $X^h$. In each case, such a path $\gamma_{x,y}$ is called a \textit{preferred path} in $X^h$ between $x$ and $y$. 

\begin{prop}[\cite{GM2008}*{Lemma 5.6}] \label{lem_preferred_path_vs_geodesics}
    Suppose $x$ and $y$ lie in the same horoball $H^h$, for some $H \in \mathcal{H}_G$. Then the preferred path $\gamma_{x,y}$ is at Hausdorff distance at most $5$ from any geodesic connecting $x$ and $y$ in $X^h$.
\end{prop} 


For any $x,y\in H\in \mathcal{H}_G$ if $d_H(x,y) = 2^n$, then the length of the preferred path $\gamma_{x,y}$ is $2n+1$. Every $H \in \mathcal{H}_G$ is uniformly properly embedded in the Cayley graph $X$. Thus, by applying \autoref{lem_preferred_path_vs_geodesics}, we have the following lemma.

\begin{lem}\label{lem_Uniformly_Properly_embedded_Lemma}
    The Cayley graph $X$ is properly embedded in $X^h$. That is, for any $r \geq 0$, there exists a constant $K_2 = K_2(r) \ge 0$ such that for any $x, y \in X$, if $d_{X^h}(x, y) \le r$, then $d_X(x, y) \le K_2$.   
\end{lem}
 
Note that all groups appearing in the pair $(G,\mathcal{H}_G)$ are finitely generated. Hence, the associated cusped space is locally finite and proper (see \cite{GM2008}*{Remark~3.14}).

\begin{defn}[Bowditch boundary, \cite{Bowditch}] 
    Let $(G,\mathcal{H}_G)$ be a relatively hyperbolic group with a Cayley graph $X$. The Gromov boundary $\partial X^h$ of the hyperbolic metric space $X^h$ is called the Bowditch boundary of $G$ (or of $X$). A point $a \in \partial X^h$ is said to be a parabolic endpoint if it is a Gromov boundary of some horoball $H^h$ for $H\in\mathcal{H}_{G}$.
\end{defn}

\noindent Let $(G,\mathcal{H}_G)$ be a group as in \autoref{defn_rel_hyp} (not necessarily relatively hyperbolic). The identity map on $G$, when $G$ is equipped with word metrics corresponding to two different finite generating sets, is $\lambda$-bi-Lipschitz for some $\lambda \geq 1$. 
This identity map fixes each element of $\mathcal{H}_G$ pointwise and extends naturally to an identity map on the vertex sets of the associated cusped spaces, preserving the lengths of vertical segments. 
Moreover, the lengths of preferred paths joining the same pair of points in a combinatorial horoball in the two cusped spaces differ by at most $2\log_2 \lambda$. 
It follows that the associated cusped spaces are quasi-isometric. 
In particular, the definitions of relative hyperbolicity and of the Bowditch boundary are independent of the choice of finite generating set.

Dru\c{t}u \cite{Drutu}*{Theorem 1.2} proved that relative hyperbolicity is a quasi-isometry invariant in the following sense. Let $\varphi : G_1 \to G_2$ be a quasi-isometry between two finitely generated groups $G_1$ and $G_2$, and suppose that $G_1$ is hyperbolic relative to a finite collection of subgroups ${H_1, \dots, H_n}$. Then there exists a finite collection of subgroups ${H_1', \dots, H_m'}$ of $G_2$ such that $G_2$ is hyperbolic relative to $\{H'_1, \dots, H'_m\}$, and each $H'_i$ is quasi-isometrically embedded in some $H_j$ for $j=j(i)\in\{1,\dots,n\}$. 

\begin{defn}[Coarsely cusp-preserving quasi-isometry] \label{defn_cusp_preserving}
    Let $(G_1,\mathcal{H}_{G_1})$ and $(G_2,\mathcal{H}_{G_2})$ be two relatively hyperbolic groups, with Cayley graphs $X$ and $Y$, respectively. Given $K \ge 0$, a quasi-isometry $\varphi : X \to Y$ is said to be $K$-coarsely cusp-preserving if the following conditions hold:
    \begin{enumerate}[$(i)$]
        \item For every $H \in \mathcal{H}_{G_1}$, there exists $H' \in \mathcal{H}_{G_2}$ such that the image $\varphi(H)$ is contained in the $K$-neighbourhood of $H'$ in $Y$. 

        \item For every $H' \in \mathcal{H}_{G_2}$, there exists $H \in \mathcal{H}_{G_1}$ such that $\varphi^{-1}(H')$ is contained in the $K$-neighbourhood of $H$ in $X$, where $\varphi^{-1}$ is a quasi-isometry inverse of the quasi-isometry $\varphi$.
    \end{enumerate}
\end{defn}

  
\begin{prop}[\cite{Mackay-Sisto}*{Proposition 5.5, Corollary 1.2}, \cite{Pal}*{ Lemma 1.2.31}]\label{thm_cusp_prev_qi_implies_homeo}
    Given $\lambda \ge 1$ and $\epsilon, K \ge 0$, there exist constants $\lambda'=\lambda'(\epsilon, \lambda, K) \ge 1$ and $\epsilon' = \epsilon'( \epsilon, \lambda, K) \ge 0$ such that the following holds. Let $(G_1,\mathcal{H}_{G_1})$ and $(G_2,\mathcal{H}_{G_2})$ be two relatively hyperbolic groups with Cayley graphs $X$ and $Y$, respectively, and suppose that $\varphi: X \to Y$ is a $K$-coarsely cusp-preserving $(\lambda, \epsilon)$-quasi-isometry. Then:
    \begin{enumerate}[$(i)$]
        \item The map $\varphi$ extends to a $(\lambda', \epsilon')$-quasi-isometry $\varphi^h: X^h \to Y^h$ with the following property. Let $H_1 \in \mathcal{H}_{G_1}$ and suppose that $\varphi(H_1)$ is contained in the $K$-neighbourhood of some  $H_2\in \mathcal{H}_{G_2}$. Then for every $(w,t) \in H_1^h$, we have $\varphi^h(w,t)=\gamma_{\varphi(w)}(t)$, where $\gamma_{\varphi(w)}:[0,\infty)\to Y^h$ is a geodesic ray in $Y^h$ with $\gamma_{\varphi(w)}(0)=\varphi(w)$, and there exists $t_0\le K$ such that $\gamma_{\varphi(w)}\big|_{[t_0,\infty)}$ is a vertical geodesic ray in $H_2^h$ with $\gamma_{\varphi(w)}(t_0)\in H_2$.

        \item The boundary extension map $\partial \varphi^h: \partial X^h \to \partial Y^h$ is a homeomorphism.
    \end{enumerate}
\end{prop}

Below, we give examples related to coarsely cusp-preserving quasi-isometries that illustrate the conclusions of \autoref{thm_main_theorem_1}.

\begin{ex}
    \noindent $(i)$  Consider the free groups \(G_1=\mathbb{F}(a,b)\) and \(G_2=\mathbb{F}(c,d)\), where the generators $a,b,c,$ and $d$ act as parabolic isometries of the Poincar\'{e} disc $\mathbb{D}$, as illustrated in \autoref{fig:poincare_isometries}. 

    \begin{figure}[h]    
    \centering
    \begin{tikzpicture}[scale=.8]
        \draw (0,0) circle (2.5 cm);

        \draw (2.5,0) arc (270:180:2.5);
        \draw (2.5,0) arc (90:180:2.5);
        \draw (-2.5,0) arc (90:0:2.5);
        \draw (-2.5,0) arc (-90:0:2.5);

        \foreach \i in {0,1,2,3} {
        \draw
        ({2.5*cos(35+90*\i)}, {2.5*sin(35+90*\i)}) arc ({305+90*\i}:{145+90*\i}:.45);
        }
        \foreach \i in {0,1,2,3} {
        \draw[bend left=50] ({2.5* cos (90*\i)}, {2.5*sin (90*\i)}) edge ({2.5* cos (35+90*\i)}, {2.5*sin (35+90*\i)});
        \draw[bend left=50] ({2.5* cos (55.55+90*\i)}, {2.5*sin (55.5+90*\i)}) edge ({2.5* cos (90+90*\i)}, {2.5*sin (90+90*\i)});
        }

        
        \draw (.8*cos 135, .8*sin 135) edge[->, bend right=50, >=stealth] (.8*cos 45, .8*sin 45);
        \draw (.8*cos 225, .8*sin 225) edge[<-, bend left=50, >=stealth] (.8*cos 315, .8*sin 315);

        \node at (0,0.7) {a};
        \node at (0,-0.7) {b};

    \end{tikzpicture}    
    \hspace{1cm}
    \begin{tikzpicture}[scale=.8]
        \draw (0,0) circle (2.5 cm); 

        \draw (0,2.5) arc (180:282.8:2);
        \draw (0,2.5) arc (0:-102.8:2);
        \draw (0,-2.5) arc (0:102.8:2);
        \draw (0,-2.5) arc (180:77.2:2); 

        \draw (0,2.5) arc (180:341:.4);
        \draw (0,2.5) arc (0:-161:.4);
        \draw (0,-2.5) arc (0:161:.4);
        \draw (0,-2.5) arc (180:19:.4); 

        \foreach \i in {0,1,2,3} {
        \draw ({2.5*cos (45+90*\i)}, {2.5*sin (45+90*\i)}) arc ({90+45+90*\i}:{296+90*\i}:.4);     
        \draw ({2.5*cos (45+90*\i)}, {2.5*sin (45+90*\i)}) arc ({315+90*\i}:{153+90*\i}:.4);
        }

        \draw (cos 135, sin 135) edge[->, bend right=50, >=stealth] (cos 45, sin 45);
        \draw (cos 225, sin 225) edge[<-, bend left=50, >=stealth] (cos 315, sin 315);

        \node at (0,0.7) {c};
        \node at (0,-0.7) {d};
    \end{tikzpicture}
    \caption{}    \label{fig:poincare_isometries}
    \end{figure} 
    
    Both $G_1$ and $G_2$ act freely and properly discontinuously by isometries on the Poincar\'{e} disc $\mathbb{D}$. The quotient space $\mathbb{D}/G_1$ is a finite-volume hyperbolic surface with three cusps, whereas the quotient space $\mathbb{D}/G_2$ is an infinite-volume hyperbolic surface with two cusps. Both surfaces are homeomorphic to the thrice-punctured sphere $\Sigma_{0,3}$.

    In $G_1$, the isometry $ab$ is parabolic, whereas in $G_2$ the isometry $cd$ is hyperbolic. By Definition~3.3 of \cite{Hruska}, the group $G_1$ is hyperbolic relative to the collection of cyclic subgroups \( \{\langle a\rangle,\; \langle b\rangle,\; \langle ab\rangle\},\) and its Bowditch boundary is homeomorphic to $\partial\mathbb{D}=\mathbb{S}^1$.

    For the group $G_2$, let $U$ denote the closed convex half-plane in $\mathbb{D}$ whose topological boundary is the axis of $cd$. Define
    \[ 
    \mathcal{S}=\{gU \mid g\in G_2\}, \qquad X=\mathbb{D}\setminus \bigsqcup_{V\in\mathcal{S}} \operatorname{int}(V), 
    \]
    where $\operatorname{int}(V)$ denotes the interior of $V$. The space $X$ is a Gromov hyperbolic metric space, and the quotient $X/G_2$ is a surface with one topological boundary component and two cusps. By Definition~3.3 of \cite{Hruska}, the group $G_2$ is hyperbolic relative to $\{\langle c\rangle, \langle d\rangle\}$.
    The Bowditch boundary of $G_2$ is homeomorphic to the Gromov boundary $\partial X$, which is totally disconnected. Therefore, the Bowditch boundaries of $G_1$ and $G_2$ are not homeomorphic. Consequently, by \autoref{thm_main_theorem_1}, there does not exist any coarsely cusp-preserving quasi-isometry between $G_1$ and $G_2$, even though the groups themselves are quasi-isometric. 
   
    \medskip
    \noindent $(ii)$ $(a)$ The free group $\mathbb{F}(u,v)$ is hyperbolic relative to the cyclic subgroup $\langle uvu^{-1}v^{-1}\rangle$, with Bowditch boundary $\partial \mathbb{D}=\mathbb{S}^1$, when $\mathbb{F}(u,v)$ is realized as the fundamental group of a torus with a single puncture at infinity. Consider the automorphism \( T:\mathbb{F}(u,v)\to \mathbb{F}(u,v) \) defined on generators by \( T(u)=uv^{-1}, ~T(v)=v . \) The automorphism $T$ preserves the parabolic subgroup $\langle uvu^{-1}v^{-1}\rangle$, and since $T$ is an isomorphism, it is a quasi-isometry of $\mathbb{F}(u,v)$.
    
    \medskip   
    \noindent $(b)$ Let $\Sigma_{1,2}$ be a twice-punctured torus and $\Sigma_{1,1}$ a once-punctured torus. The fundamental groups $\pi_1(\Sigma_{1,2}) $ and $\pi_1(\Sigma_{1,1})$ are isomorphic to the free groups $\mathbb F(x,y,z)$ and $\mathbb F(u,v)$ , respectively. 
    There exists a two-sheeted covering map $p:\Sigma_{1,2}\to\Sigma_{1,1}$ such that the induced injective homomorphism $p_{*}:\mathbb F(x,y,z)\to \mathbb F(u,v) $ satisfies $p_{*}(x)=u^2, p_{*}(y)=v, p_{*}(z)=uvu$
    and $p_{*}(x^{-1}yz^{-1})=u^{-1}vuv^{-1} $, $ p_{*}(xy^{-1}z)=uv^{-1}u^{-1}v$. The group $\mathbb F(x,y,z)$ is hyperbolic relative to $\{<x^{-1}yz^{-1}>, <xy^{-1}z>\}$. The subgroup $p_{*}(\mathbb F(x,y,z))$ is of index two in $\mathbb F(u,v)$. Hence, the map $p_{*}$ is a quasi-isometry that preserves the cusps. 
\end{ex}

Farb \cite{Farb}*{Lemma 4.4} proved that horospheres in a pinched Hadamard manifold are visually bounded, as described in the statement of \autoref{prop_visually_bounded}. The same result holds for horospheres in a relatively hyperbolic group \cite{Pal}*{Lemma 1.2.39}.

\begin{prop}[Visual boundedness of horospheres] \label{prop_visually_bounded} 
     There exists a constant $K_3 = K_3(\delta) \geq 0$ such that the following holds.    
     For any \(H \in \mathcal{H}_G\) and any geodesic \(\gamma\) in $X^h$ that does not intersect \(H\), let \(T_{\gamma}\) denote the set of points \(p \in H\) for which there exists some \(t \in \mathbb{R}\) such that a geodesic segment \([\gamma(t), p]\) intersects \(H\) only at \(\{p\}\). Let $\mathrm{diam}(T_{\gamma})$ denote the diameter of $T_{\gamma}$. Then 
     \(
     \sup\{\mathrm{diam}(T_{\gamma}) \,:\, \gamma\mbox{ is a geodesic not intersecting }H\},
     \) 
     called the visual size of $H$, is at most $K_3$.   
\end{prop}


\section{Quasi-centres, Cross Ratios and Quasi-M\"{o}bius maps}\label{sec_QC_and_QM}
   
This section outlines Paulin’s contributions concerning quasi-M\"{o}bius maps between Gromov boundaries of hyperbolic metric spaces. In particular, for a geodesic triangle in a hyperbolic metric space, Paulin introduced the concept of a quasi-projection, which remains within a uniformly bounded distance from a quasi-centre of the triangle. While quasi-projections play a key role in his theory, their definition involves certain technical subtleties. In contrast, quasi-centres provide a more geometric and accessible alternative. For this reason, we adopt quasi-centres in place of quasi-projections in our exposition.  

\subsection{Quasi-centres} \label{subsec_quasi_centres} 

Consider a geodesic triangle $\triangle$ in a proper $\delta$-hyperbolic metric space $X$. Each side of $\triangle$ contains a point whose distance to the other two sides is at most $\delta$. If the triangle $\triangle$ is taken to be an ideal triangle (i.e., all its vertices lie in $\partial X$), then it can be shown that there exists a point in $X$ whose distance to each side of $\triangle$ is at most $3\delta$. It is a standard fact that any two bi-infinite geodesics joining the same pair of points in the Gromov boundary $\partial X$ lie in $2\delta$-neighbourhood of each other. Thus, given any three distinct points $a,b,c\in \partial X$, there exists a point $p\in X$ whose distance to each side of any ideal triangle with vertices $a,b,c$ is at most $5\delta$. 

\begin{defn}[Quasi-centre, \cite{Mackay-Sisto}] 
    Let $(X,d_X)$ be a proper $\delta$-hyperbolic metric space and let $\overline{X} = X \cup \partial X$. Given three distinct points $a,b,c \in \overline{X}$, a quasi-centre for $a,b,c$ is a point $\pi_X({a,b,c}) \in X$ that lies within distance $5\delta$ from each geodesic joining $a$ to $b$, $b$ to $c$, or $a$ to $c$.
\end{defn} 





\begin{lem}[\cite{Bow1991}*{Lemma 3.1.5}] \label{lem_bounded_diameter_of_points_on_a_geodesic}
    For each $\delta, r \ge 0$, there exists a constant $C_1 = C_1(\delta,r) \ge 0$ such that the following holds. Let $X$ be a proper $\delta$-hyperbolic metric space. Then for any geodesic triangle $\triangle(a, b, c)$ with vertices $a,b,c \in \overline{X}$, the set of points in $X$ that lie within distance $r$ from each side of $\triangle(a,b,c)$ has diameter at most $C_1$. 
\end{lem}
  
In the following proposition, we state some basic properties about quasi-centres. These follow from standard arguments using \autoref{ex_(3,0)_quasigeodesic}, the stability of quasigeodesics \autoref{Prop_Stability_of_quasigeodesics}, and \autoref{lem_bounded_diameter_of_points_on_a_geodesic}.

\begin{prop}\label{prop_basic_properties_of_quasi_centres}
    Let $X$ be a proper $\delta$-hyperbolic metric space. Then there exists a constant $C_2 = C_2(\delta) \ge 0$ such that for any three distinct points $a,b,c \in \overline{X}$, the following properties hold: 
    \begin{enumerate}[$(i)$]        
        \item The set of all quasi-centres for $a,b,c$ is non-empty and has diameter at most $C_2$.

        \item If $b\in X$, then the distance between a nearest point projection of $b$ onto $[a,c]$ and a quasi-centre for $a,b,c$ is at most $C_2$.
        
        \item If $a,b,c \in X$, then the Gromov product $(a,c)_b$ satisfies ${(a,c)}_b \approx_{C_2} d_X(b, \pi_X(a,b,c)).$   
    \end{enumerate}
\end{prop}

For a proper hyperbolic metric space $X$, consider the set of distinct boundary triples:
\begin{displaymath}
     \partial^3X \coloneqq \{(a,b,c) \mid a,b,c \in \partial X, \ a \ne b, b \ne c, c \ne a\}.
\end{displaymath}

For each triple $(a,b,c) \in \partial^3X$, an associated quasi-centre $\pi_X(a,b,c) \in X$ defines a map $\pi_X: \partial^3X \to X$. By \autoref{prop_basic_properties_of_quasi_centres}, the map $\pi_X$ is uniquely defined up to a bounded discrepancy. Furthermore, we choose the map $\pi_X$ in a way so that it remains invariant under the permutation of the coordinates $a,b,c$. 


\begin{lem}[Quasi-isometries coarsely preserve quasi-centres, \cite{Mackay-Sisto}*{Lemma 3.14}] \label{lem_qi_coarsely_preserves_the_quasi-centres}
    Let $(X,d_X)$ and $(Y,d_Y)$ be two proper $\delta$-hyperbolic metric spaces, and let $\varphi: X \to Y$ be a $(\lambda,\epsilon)$-quasi-isometry, where $\delta,\epsilon \ge 0$ and $\lambda \ge 1$. Then quasi-centres in $X$ are mapped coarsely to quasi-centres in $Y$ under $\varphi$: that is, there exists a constant $C_3 = C_3(\delta,\lambda,\epsilon) \ge 0$ such that for any three distinct points $a,b,c \in \overline{X}$, we have $$d_Y\left(\varphi(\pi_X(a,b,c)), \pi_Y(\overline{\varphi}(a),\overline{\varphi}(b),\overline{\varphi}(c))\right) \le C_3,$$ 
    where $\overline{\varphi}: \overline{X} \to \overline{Y}$ denotes the extension of $\varphi$ to the compactification. 
\end{lem}

The following lemma asserts that $\pi_X$ is coarsely continuous. Its proof relies on standard arguments concerning the convergence of geodesics in proper hyperbolic metric spaces. We omit the proof and refer the reader to Paulin's work, where it has been stated in terms of ‘quasi-projections’.

\begin{lem}[\cite{Pau96}*{Lemma 5.1}] \label{lem_quasi-continuity_of_quasi-centres}
    Given  $\delta \ge 0$, there exists a constant $C_4=C_4(\delta) \ge 0$ such that the following holds. Let $X$ be a proper $\delta$-hyperbolic metric space, and let $(a,b,c) \in \partial^3X$. Suppose $\{(a_i,b_i,c_i)\}$ is a  sequence in $\partial^3X$ converging to $(a,b,c)$. Then, for all sufficiently large $i$, we have 
    $d_X(\pi_X(a,b,c), \pi_X(a_i, b_i, c_i)) \le C_4.$
\end{lem}


\subsection{Cross-Ratios and Quasi-M\"{o}bius Maps} \label{subsec_cross-ratios_and_QM}

\begin{defn}[Cross-ratio, \cite{Pau96}]
    Let $(X, d_X)$ be a proper hyperbolic metric space. Given four distinct points $a,b,c,d \in X$, the cross-ratio $[a,b,c,d]$ is defined by
    $$[a,b,c,d] \coloneqq \frac{1}{2}\left\{d_X(a,d) - d_X(c,d) + d_X(b,c) - d_X(a,b)\right\}.$$
    \begin{figure}[H]
    \centering
    \begin{tikzpicture}[scale=.5]
        \draw (1.9,5)..controls(2.3,2.5)and(2.1,1.8)..(2,1.5);
        \draw (1.9,5)..controls(2.4,2.9)and(2.6,2.6)..(3.5,2.3);
        \draw (0,0)..controls(1,0.5)and(1.7,.8)..(2,1.5);
        \draw (0,0)..controls(1,0.2)and(2.4,0.55)..(3,0.6);
        
        \draw (8.1,-2)..controls(7.7,0.5)and(7.9,1.2)..(8,1.5);
        \draw (8.1,-2)..controls(8,-1)and(7.2,0.2)..(6.5,0.5);
        \draw (10,3)..controls(9,2.5)and(8.3,2.1)..(8,1.5);
        \draw (10,3)..controls(8.5,2.6)and(7.4,2.3)..(7,2.3);
        
        \draw (3.5,2.3)..controls(4,2.2)and(5,2.1)..(7,2.3);
        \draw (6.5,0.5)..controls(6,0.7)and(5,0.9)..(3,0.6);
        
        \filldraw[black] (0,0) circle (1.5pt);
        \filldraw[black] (10,3) circle (1.5pt);
        \filldraw[black] (1.9,5) circle (1.5pt);
        \filldraw[black] (8.1,-2) circle (1.5pt);
      
        \node at (-0.5,0) {$a$};
        \node at (1.5,5) {$b$};
        \node at (10,3.5) {$c$};
        \node at (7.5,-2) {$d$};
        
        \node at (3.3,-0.1) {$+$};
        \node at (8.5,1) {$-$};
        \node at (5.5,3) {$+$};
        \node at (1.4,2.1) {$-$};
    \end{tikzpicture}
    \caption{}
    \end{figure}
    
     This definition extends naturally to the Gromov boundary $\partial X$ by taking the limit inferior over sequences of interior points converging to the boundary. Specifically, for four distinct points $a,b,c,d \in \partial X$, the cross-ratio is defined by 
     $$[a,b,c,d]= \sup \liminf_{i\to\infty}\ [a_i,b_i,c_i,d_i],$$ 
     where the supremum is taken over all sequences $\{a_i\},\{b_i\},\{c_i\}$ and $\{d_i\}$ in \(X\) converging to $a$, $b$, $c$ and $d$, respectively.
\end{defn}

For distinct $a,b,c,d \in X$, we have $[a,b,c,d] = (c,d)_a-(b,c)_a$. By \autoref{prop_basic_properties_of_quasi_centres}, we have ${(c,d)}_a \approx_{C_2} d_X(a, \pi_X(a,c,d))$
and ${(b,c)}_a \approx_{C_2} d_X(a, \pi_X(a,b,c))$. Therefore, the quantities $|[a,b,c,d]|$ and $d_X(\pi_X(a,b,c), \pi_X(a,c,d) )$ are coarsely the same. The same conclusion holds when $a,b,c,d \in \partial X$. We formalize this in the following lemma, which was also proved by Paulin in \cite{Pau96}, where quasi-projections are used instead of quasi-centres..

\begin{lem}[\cite{Pau96}*{Lemma 4.2}] \label{lem_cross_ratios_vs_quasi_centres}
    Given $\delta \geq 0$, there exists a constant $C_5 = C_5(\delta) \ge 0$ such that the following holds. Let $X$ be a proper $\delta$-hyperbolic metric space, and let $a, b, c, d \in \partial X$ be four distinct points. Then,
    $$|[a,b,c,d]| \approx_{C_5} d_X(\pi_X(a,b,c),\pi_X(a,c,d) ).$$    
\end{lem}

The value of the cross ratio $[a,b,c,d]$ changes by changing the ordering of $a,b,c,d$. The following lemma provides an essential control over cross-ratios in a hyperbolic metric space when the ordering is changed.

\begin{lem}[\cite{CCM2019}*{Lemma 4.2}]\label{lem_one_of_three_cross-ratio_is_bdd}
  Given $\delta \geq 0$, there exists a constant $C_6 = C_6(\delta) \ge 0$ such that the following holds. Let $X$ be a proper $\delta$-hyperbolic metric space. Then for any four distinct points $a,b,c,d \in \partial X$, at least one of the three cross-ratios $[a,b,c,d]$, $[a,c,b,d]$, or $[c,a,b,d]$ has absolute value at most $C_6$.
\end{lem}


\begin{defn}[Quasi-M\"{o}bius map]\label{defn_QM}
    Let $(X,d_X)$ and $(Y,d_Y)$ be proper hyperbolic metric spaces, and let $\psi:[0,\infty)\to[0,\infty)$ be a continuous function. A bijection $f:\partial X\to\partial Y$ is called $\psi$-quasi-M\"{o}bius if for distinct $a,b,c,d \in \partial X$, 
    \begin{displaymath}
        |[f(a),f(b),f(c),f(d)]| \le \psi(|[a,b,c,d]|),
    \end{displaymath}
    and for distinct $a',b',c',d' \in \partial Y$,
    \begin{displaymath}
        |[f^{-1}(a'),f^{-1}(b'),f^{-1}(c'),f^{-1}(d')]|\le \psi(|[a',b',c',d']|).
    \end{displaymath}
    
    A bijection $f:\partial X \to \partial Y$ is called quasi-M\"{o}bius if it is $\psi$-quasi-M\"{o}bius for some continuous function $\psi:[0,\infty)\to[0,\infty)$. In this case, the function $\psi$ is called a distortion function associated with $f$.
\end{defn}

\begin{prop}[\cite{Pau96}*{Lemma 4.4}] \label{prop_QM_is_homeo}
    Let $X$ and $Y$ be two proper hyperbolic metric spaces. Then any quasi-M\"{o}bius map $f:\partial X\to\partial Y$ is a homeomorphism.
\end{prop} 

We next state Paulin's result that a quasi-isometry between two proper hyperbolic metric spaces induces a 
quasi-M\"{o}bius map between their Gromov boundaries.

\begin{thm}[\cite{Pau96}*{Proposition 4.5}]\label{thm_qi_implies_qm}
    Given constants $\delta, \epsilon \ge 0$ and $\lambda \ge 1$, there exist constants $A_1 = A_1(\delta, \epsilon, \lambda) \ge 1$ and $B_1 = B_1(\delta, \epsilon, \lambda) \ge 0$ such that the following holds. 
    Let $(X,d_X)$ and $(Y,d_Y)$ be proper $\delta$-hyperbolic metric spaces, and let $\varphi: X \to Y$ be a $(\lambda,\epsilon)$-quasi-isometry. Then the induced boundary map $\partial \varphi: \partial X \to \partial Y$ is a $\psi$-quasi-M\"{o}bius homeomorphism with distortion function
    $\psi(t) = A_1t+B_1,$ for all $t \ge 0$.
\end{thm}


\section{Quasi-Isometries Between Relatively Hyperbolic Groups and Relative Quasi-M\"{o}bius Homeomorphisms Between Their Bowditch Boundaries} \label{sec_QM_and_QI}


\subsection{Relative Quasi-M\"{o}bius Maps} \label{subsec_rel_QM}
Let $(G,\mathcal{H}_G)$ be a relatively hyperbolic group with a Cayley graph $X$. Let $X^h$ and $\partial X^h$ denote the cusped space and the Bowditch boundary, respectively. Let $a, b, c_H \in \partial X^h$ be three distinct points, where $c_H$ is a parabolic endpoint associated with a horosphere $H \in \mathcal{H}_G$. We define below the relative cross-ratio $[a,b,c_H]$, whose absolute value coarsely measures the distance between a quasi-centre of $(a,b,c_H)$ and $H$.

\begin{defn} (Relative cross-ratio) 
    The relative cross-ratio of $a,b,c_H$ is defined by 
    $$[a,b,c_H] \coloneqq \sup \liminf_{i\to\infty} \frac{1}{2} \left\{d_{X^h}(a_i,H) + d_{X^h}(b_i,H) - d_{X^h}(a_i,b_i)\right\},$$ 
    where the supremum is taken over all sequences $\{a_i\}$ and $\{b_i\}$ in $X^h$ with $a_i \to a$ and $b_i \to b$, and $d_{X^h}(x,H)$ denotes the distance from a point $x \in X^h$ to $H$. 
\end{defn}

\begin{prop}\label{prop_relative_cross_ratio_vs_quasi_centres}
    Given $\delta \ge 0$, there exists a constant $C_7 = C_7(\delta) \ge 0$ such that the following holds. Let $a,b,c_H \in \partial X^h$ be three distinct points, where $c_H$ is a parabolic endpoint corresponding to some $H \in \mathcal{H}_G$. Then the relative cross-ratio satisfies the coarse equivalence $$|[a,b,c_H]| \approx_{C_7} d_{X^h}(\pi_{X^h}(a,b,c_H),H).$$
\end{prop} 

\begin{proof} 
    Let $\{a_i\}$ and $\{b_i\}$ be sequences in $X^h$ converging to $a$ and $b$, respectively. Since $a$, $b$, and $c_H$ are distinct, for all sufficiently large $i$ the points $a_i$ and $b_i$ lie outside the horoball $H^h$. By discarding finitely many terms, we may assume this holds for all $i$.    
    For each $i$, let $z_i, w_i \in H$ be such that
    \[
    d_{X^h}(a_i,z_i) = d_{X^h}(a_i,H)\quad \text{and} \quad d_{X^h}(b_i,w_i) = d_{X^h}(b_i,H).
    \] 
    Consider the geodesic segment $[a_i,b_i]$ in $X^h$. There are two cases: either $[a_i,b_i]$ lies entirely outside $H^h$, or it passes through $H^h$.

    \medskip\noindent\textbf{Case~1.} Suppose that the geodesic $[a_i,b_i]$ lies entirely outside the horoball $H^h$. 

    By \autoref{prop_visually_bounded}, there exists a constant $K_3=K_3(\delta) \geq 0$ such that $d_{X^h}(z_i,w_i)\le K_3$. Therefore, by \autoref{lem_cross_ratios_vs_quasi_centres}, we obtain
    \begin{align*}
        \frac{1}{2} |d_{X^h}(a_i,H)+d_{X^h}(b_i,H)-d_{X^h}(a_i,b_i)| 
        & = \frac{1}{2} |d_{X^h}(a_i,b_i)-d_{X^h}(a_i,z_i)-d_{X^h}(b_i,w_i)| \\
        & \approx_{K_3} |[a_i,z_i,w_i,b_i]| \\
        & \approx_{K_3 + C_5} d_{X^h}(\pi_{X^h}(a_i,z_i,w_i), \pi_{X^h}(a_i,w_i,b_i)),
    \end{align*}
    
    \begin{figure}[H]
        \centering
        \begin{tikzpicture}[scale=.8]
            \shade[top color=white,bottom color=gray!20] (-5,2.5) rectangle (5,5);
            \draw (-5,2.5) -- (5,2.5);
            \draw (2,0) arc (20:160:2cm); 
            \draw[dashed] (2,2.5) arc (70:175:2.79cm);

            \draw (2,0) -- (2,4);
            \draw (-1.75,0) -- (-1.75,4);
            \draw[dashed] (2,4)--(2,5);
            \draw[dashed] (-1.75,4)--(-1.75,5); 

            \filldraw (2,2.5) circle (1pt);
            \filldraw (2,0) circle (1pt);
            \filldraw (-1.75,0) circle (1pt);
            \filldraw (-1.75,2.5) circle (1pt);

            \node at (-2.05,0) {$a_i$};
            \node at (2.3,0) {$b_i$};
            \node at (-2.05,2.2) {$z_i$};
            \node at (2.3,2.2) {$w_i$}; 
            \node at (3,4) {$H^h$};
        \end{tikzpicture} 
        \caption{}
    \end{figure} 

    By definition, the quasi-centre $\pi_{X^h}(a_i,w_i,b_i)$ lies within distance $5\delta$ of each side of the triangle $\triangle(a_i,w_i,b_i)$. Since $d_{X^h}(z_i,w_i)\le K_3$, the $\delta$-slimness of $\triangle(a_i,z_i,w_i)$ implies that $\pi_{X^h}(a_i,w_i,b_i)$ lies within distance $K_3+6\delta$ of each side of $\triangle(a_i,b_i,c_H)$. Hence, by \autoref{lem_bounded_diameter_of_points_on_a_geodesic}, there exists a constant $C_1=C_1(\delta,K_3+6\delta)\ge0$ such that 
    \[ 
    d_{X^h}\!\left(\pi_{X^h}(a_i,w_i,b_i),\,\pi_{X^h}(a_i,b_i,c_H)\right)\le C_1. 
    \]  

    By the definition of quasi-centres, there exists a point $p_i\in[a_i,b_i]$ such that 
    \[
    d_{X^h}\!\left(\pi_{X^h}(a_i,b_i,c_H),p_i\right)\le5\delta.
    \] 
    Let $p_i'\in H$ be a nearest-point projection of $p_i$ onto $H$. By \autoref{prop_visually_bounded}, we have $d_{X^h}(p_i',z_i)\le K_3$. Moreover, since $d_{X^h}(z_i,w_i)\le K_3$ and quasi-centres lie within $5\delta$ of each side of the corresponding triangle, we obtain
    \[
    d_{X^h}\!\left(\pi_{X^h}(a_i,z_i,w_i),z_i\right)\le K_3+5\delta.
    \]

    Combining the above estimates, we get 
    \begin{align*}
        d_{X^h}(\pi_{X^h}(a_i,w_i,b_i), \pi_{X^h}(a_i,z_i,w_i)) 
        & \le d_{X^h}(p_i,p_i') +  2K_3 + C_1 + 10\delta\\
        & \le d_{X^h}(\pi_{X^h}(a_i,b_i,c_H), H) + 2 K_3 + C_1 + 15\delta.
    \end{align*} 
    
    On the other hand, 
    \begin{align*}
        d_{X^h}(\pi_{X^h}(a_i,b_i,c_H), H) 
        & \le d_{X^h}(\pi_{X^h}(a_i,b_i,c_H), \pi_{X^h}(a_i,z_i,w_i)) + d_{X^h}(\pi_{X^h}(a_i,z_i,w_i), H) \\
        & \le (d_{X^h}(\pi_{X^h}(a_i,w_i,b_i), \pi_{X^h}(a_i,z_i,w_i)) + C_1) + (K_3 + 5\delta).
    \end{align*}

    Setting $C_7' = 3K_3 + C_1 + C_5 + 15\delta$, we conclude that
    $$\frac{1}{2} | d_{X^h}(a_i,H)+d_{X^h}(b_i,H)-d_{X^h}(a_i,b_i) | \approx_{C_7'} d_{X^h}(\pi_{X^h}(a_i,b_i,c_H), H).$$

    \medskip\noindent \textbf{Case~2.} Suppose the geodesic $[a_i,b_i]$ passes through the horoball $H^h$. 
    
    \begin{figure}[H] 
        \centering
        \begin{tikzpicture}[scale=.8]
            \shade[top color=white,bottom color=gray!20] (-5,1.5) rectangle (5,5);
            \draw (-5,1.5) -- (5,1.5);
            \draw (4,0) arc (20:160:4cm); 

            \draw (4,0) -- (4,1.5);
            \draw (3.04,1.5) -- (3.04,4);
            \draw[dashed] (3.04,4)--(3.04,5);
            
            \draw (-3.52,0) -- (-3.52,1.5); 
            \draw (-2.55,1.5) -- (-2.55,4);
            \draw[dashed] (-2.55,4)--(-2.55,5);

            \filldraw (-3.52,0) circle (1pt);
            \filldraw (4,0) circle (1pt);
            
            \filldraw (-3.52,1.5) circle (1pt);
            \filldraw (-2.55,1.5) circle (1pt);
            \filldraw (4,1.5) circle (1pt);            
            \filldraw (3.04,1.5) circle (1pt);
            
            \filldraw (0,2.62) circle (1pt);
            \filldraw (-2.55,3.8) circle (1pt);
            \filldraw (3.04,3.8) circle (1pt);

            \draw[bend left,dashed] (-2.55,3.8) edge (0,2.62);
            \draw[bend right,dashed] (3.04,3.8) edge (0,2.62);

            \node at (-3.82,0) {$a_i$};
            \node at (4.3,0) {$b_i$};
            \node at (-3.82,1.2) {$z_i$};
            \node at (4.3,1.2) {$w_i$}; 
            \node at (-2.4,1.2) {$u_i$};
            \node at (2.9,1.2) {$v_i$};
            \node at (0,2.32) {$p_1$};
            \node at (-2.85,3.8) {$p_2$};
            \node at (3.34,3.8) {$p_3$};
        \end{tikzpicture} 
        \caption{}
    \end{figure} 

    Let $u_i,v_i\in H$ be the first entry point and last exit point, respectively, of $[a_i,b_i]$ into $H^h$. By \autoref{prop_visually_bounded}, there exists $K_3=K_3(\delta)\ge0$ such that 
    \[
    d_{X^h}(z_i,u_i) \le K_3 \quad \text{and} \quad d_{X^h}(v_i,w_i) \le K_3.
    \] 
    Therefore, 
    \begin{align*}
        |d_{X^h}(a_i,H)+d_{X^h}(b_i,H)-d_{X^h}(a_i,b_i)| & = |d_{X^h}(a_i,z_i) + d_{X^h}(b_i,w_i) - d_{X^h}(a_i,b_i)| \\
        & \approx_{2K_3} d_{X^h}(u_i,v_i).
    \end{align*} 

    Now consider the geodesic triangle $\triangle(u_i,v_i,c_H)$. There exist points $p_1 \in [u_i,v_i]$, $p_2 \in [u_i,c_H]$, and $p_3 \in [v_i,c_H]$ such that for each $j=1,2,3$, we have $$d_{X^h}(\pi_{X^h}(u_i,v_i,c_H),p_j) \le 5\delta.$$ 
    Then, by the triangle inequality, we obtain 
    \begin{align*}
        d_{X^h}(u_i,v_i) = d_{X^h}(u_i,p_1) + d_{X^h}(p_1,v_i) & \approx_{2K_3 + 20\delta} d_{X^h}(p_2,u_i) + d_{X^h}(p_3,v_i) \\ 
        & \approx_{2K_3 + 30\delta} 2~d_{X^h}(\pi_{X^h}(u_i,v_i,c_H),H).
    \end{align*}
    Also, $\pi_{X^h}(u_i,v_i,c_H)$ is uniformly close to $\pi_{X^h}(a_i,b_i,c_H)$. In particular, there exists a constant $r = r(\delta, K_3) \ge 0$ such that \(d_{X^h}(\pi_{X^h}(u_i,v_i,c_H), \pi_{X^h}(a_i,b_i,c_H)) \le r.\)
    Setting $C_7'' = K_3 + 15\delta+r$, we conclude that 
    $$\frac{1}{2} | d_{X^h}(a_i,H)+d_{X^h}(b_i,H)-d_{X^h}(a_i,b_i) | \approx_{C_7''} d_{X^h}(\pi_{X^h}(a_i,b_i,c_H),H).$$ 

    Finally, by the quasi-continuity of quasi-centres (\autoref{lem_quasi-continuity_of_quasi-centres}), there exists a constant $C_4 = C_4(\delta) \ge 0$ such that for all sufficiently large $i$, we have $$d_{X^h}(\pi_{X^h}(a_i,b_i,c_H),\pi_{X^h}(a,b,c_H)) \le C_4.$$
    Combining both cases, and setting $C_7=\max\{C_7' , C_7''\} + C_4$, we conclude that 
    $$\frac{1}{2} | d_{X^h}(a_i,H)+d_{X^h}(b_i,H)-d_{X^h}(a_i,b_i) | \approx_{C_7} d_{X^h}(\pi_{X^h}(a,b,c_H),H),$$
    for all sufficiently large $i$. Taking the supremum over all such sequences yields the required estimate:
    $$|[a,b,c_H]| \approx_{C_7} d_{X^h}(\pi_{X^h}(a,b,c_H),H).$$
\end{proof} 

We now define relative quasi-M\"{o}bius maps between Bowditch boundaries of relatively hyperbolic groups, which coarsely preserve both classical and relative cross-ratios.

\begin{defn}[Relative quasi-M\"{o}bius map] \label{defn_relative_quasi-Mobius}
    Let $(G_1,\mathcal{H}_{G_1})$ and $(G_2, \mathcal{H}_{G_2})$ be two $\delta$-relatively hyperbolic groups with respective Cayley graphs $X$ and $Y$, where $\delta \ge 0$. Let $\psi:[0,\infty)\to [0,\infty)$ be a continuous map. A bijection $f : \partial X^h \to \partial Y^h$ between Bowditch boundaries is called relative $\psi$-quasi-M\"{o}bius if the following hold:
    \begin{enumerate}[$(i)$]
        \item $f$ and $f^{-1}$ preserve parabolic endpoints,

        \item $f$ is $\psi$-quasi-M\"{o}bius (i.e., it coarsely distorts cross-ratios),

        \item for distinct $a,b,c \in \partial X^h$, where $c$ is a parabolic endpoint,
        $$|[f(a),f(b),f(c)]| \le \psi(|[a,b,c]|),$$
        and for distinct $a',b',c' \in \partial Y^h$ with $c'$ a parabolic endpoint,
        $$|[f^{-1}(a'),f^{-1}(b'),f^{-1}(c')]| \le \psi(|[a',b',c']|).$$
    \end{enumerate}
    
    A bijection $f : \partial X^h \to \partial Y^h$ is called relative quasi-M\"{o}bius if it is relative $\psi$-quasi-M\"{o}bius for some continuous distortion function $\psi: [0,\infty) \to [0,\infty)$.
\end{defn} 

Every relative quasi-M\"{o}bius map between Bowditch boundaries is, by definition, a quasi-M\"{o}bius map, and hence is a homeomorphism.


\subsection{Relative Quasi-M\"{o}bius Maps Induced by Quasi-isometries} \label{subsec_proof_of_thm_1}

The following theorem shows that the homeomorphism between Bowditch boundaries induced by a coarsely cusp-preserving quasi-isometry between relatively hyperbolic groups coarsely preserves relative cross-ratios.

\begin{thm}\label{thm_cusp_preserving_qi_imples_rel_QM}
    Given constants $\delta, \epsilon, K \ge 0$ and $\lambda \ge 1$, there exist constants $A_2 = A_2(\delta, \epsilon, \lambda, K) \ge 1$ and $B_2 = B_2(\delta, \epsilon, \lambda, K) \ge 0$ such that the following holds. Let $(G_1, \mathcal{H}_{G_1})$ and $(G_2, \mathcal{H}_{G_2})$ be two $\delta$-relatively hyperbolic groups with Cayley graphs $X$ and $Y$, respectively. Suppose $\varphi : G_1 \to G_2$ is a $(\lambda,\epsilon)$-quasi-isometry that is also $K$-coarsely cusp-preserving. Let $\varphi^h: X^h \to Y^h$ be an induced quasi-isometry between the corresponding cusped spaces, and let $\partial \varphi^h: \partial X^h \to \partial Y^h$ be the induced homeomorphism on the Bowditch boundaries.
    Then, for any three distinct points $a,b,c \in \partial X^h$, where $c$ is a parabolic endpoint, the relative cross-ratio satisfies
    $$|[\partial \varphi^h (a), \partial \varphi^h (b), \partial \varphi^h (c)]| \approxeq_{A_2,B_2} |[a,b,c]|.$$ 
\end{thm}

\begin{proof}
    Let $a' = \partial \varphi^h(a)$, $b' = \partial \varphi^h(b)$, and $c' = \partial \varphi^h(c)$. Let $H \in \mathcal{H}_{G_1}$ and $H' \in \mathcal{H}_{G_2}$ be the horospheres associated with the parabolic endpoints $c$ and $c'$, respectively. By \autoref{prop_relative_cross_ratio_vs_quasi_centres}, there exists a constant $C_7 = C_7(\delta) \ge 0$ such that
    $$|[a,b,c]| \approx_{C_7} d_{X^h}(\pi_{X^h}(a,b,c), H) \quad \text{and} \quad |[a',b',c']| \approx_{C_7} d_{Y^h}(\pi_{Y^h}(a',b',c'), H').$$ 
    
    Let $h \in H$ be a point such that $d_{X^h}(\pi_{X^h}(a,b,c), h) = d_{X^h}(\pi_{X^h}(a,b,c), H)$. Since $\varphi$ preserves cusps $K$-coarsely, 
    $$d_{Y^h}(\varphi(h), H') \le K.$$
    Moreover, by \autoref{lem_qi_coarsely_preserves_the_quasi-centres}, there exists a constant $C_3 = C_3(\delta, \lambda, \epsilon) \ge 0$ such that $$d_{Y^h}(\varphi(\pi_{X^h}(a,b,c)), \pi_{Y^h}(a',b',c')) \le C_3.$$ 

    Since $\varphi^h: X^h \to Y^h$ is an induced quasi-isometry induced by $\varphi$, there exists $\lambda' = \lambda'(\lambda,\epsilon, \delta) \ge 1$ and $\epsilon' = \epsilon'(\lambda,\epsilon,\delta) \ge 0$ such that $\varphi^h$ is $(\lambda',\epsilon')$-quasi-isometry. 
    Therefore,
    \begin{align*}
        |[a',b',c']| & \le  d_{Y^h}(\pi_{Y^h}(a',b',c'), H') + C_7 \\ 
        & \le d_{Y^h}(\varphi^h(\pi_{X^h}(a,b,c)), H') + C_3 + C_7 \\
        & \le  d_{Y^h}(\varphi^h(\pi_{X^h}(a,b,c)), \varphi^h(h)) + d_{Y^h}(\varphi^h(h), H') + C_3 + C_7\\
        & \le ( \lambda' d_{X^h}(\pi_{X^h}(a,b,c), h) + \epsilon') + K + C_3 + C_7\\
        & \le \lambda' |[a,b,c]| + \lambda' C_7 + \epsilon' + K + C_3 + C_7.
    \end{align*}

    Hence, setting $A_2' = \lambda'$ and $B_2' = \lambda' C_7 + \epsilon' + K + C_3 + C_7$, we conclude that 
    $$|[a',b',c']| \precapprox_{A_2', B_2'} |[a,b,c]|.$$
    An analogous argument using an inverse quasi-isometry of $\varphi^h$, we get constants $A_2''=A_2''(\delta,\lambda,\epsilon, K) \ge 1$ and $B_2''= B_2''(\delta, \lambda, \epsilon, K) \ge 0$ such that 
    $$|[a,b,c]| \precapprox_{A_2'', B_2''} |[a',b',c']|.$$
    
    Finally, setting $A_2 = max\{A_2',A_2''\}$ and $B_2= \max\{B_2',B_2''\}$, we obtain 
    $$|[a',b',c']| \approxeq_{A_2,B_2} |[a,b,c]|,$$
    as required.
\end{proof}

From \autoref{prop_QM_is_homeo}, it follows that the map $\varphi^h$ is a quasi-M\"{o}bius homeomorphism. Hence, \autoref{thm_main_theorem_1} is an immediate consequence of \autoref{prop_QM_is_homeo} and \autoref{thm_cusp_preserving_qi_imples_rel_QM}.

\subsection{Coarse Denseness of Quasi-centres}\label{subsec_coarse_denseness_of_quasi_centres}

\begin{prop} \label{prop_a_point_on_geodesic_vs_QP}
    Given $\delta \ge 0$, there exists a constant $R_1 = R_1(\delta) \ge 0$ such that the following holds. Let $(G,\mathcal{H}_G)$ be a $\delta$-relatively hyperbolic group with a Cayley graph $X$. Suppose $x$ is a point on a geodesic $[a,b]$, where $a,b \in \partial X^h$. Then, there exists $c \in \partial X^h$, distinct from both $a$ and $b$ such that $d_{X^h}(x,\pi_{X^h}(a,b,c)) \le R_1.$
\end{prop} 

Before proving \autoref{prop_a_point_on_geodesic_vs_QP}, we first establish the following auxiliary result, \autoref{lem_a_point_and_a_ray}, which will play a crucial role in the proof of the proposition.

\begin{lem}\label{lem_a_point_and_a_ray}
Let $x$ be a point on a geodesic $[a,b]$, where $a,b \in \partial X^h$. Then there exists a constant $M = M(\delta) \ge 0$ and a geodesic ray $\alpha \colon [0,\infty) \to X^h$ such that:
\begin{itemize}
    \item $d_{X^h}(\alpha(0), x) \le M$;
    \item $\alpha$ lies entirely outside the $Q$-neighbourhood of $[a,b]$, where \(Q = \max\{P, 2\delta\} + 1,\)  and $P$ is the constant obtained from \autoref{lem_Bounded_projection_lemma};
    \item the boundary point $[\alpha] \in \partial X^h$ corresponding to $\alpha$ is distinct from both $a$ and $b$.
\end{itemize}
\end{lem}

\begin{proof}        
    Without loss of generality, we may assume that $x$ is a vertex of $X^h$. Since the cosets of parabolic subgroups cover the entire group $G$, there exists $H \in \mathcal{H}_G$ such that $x \in H^h$. We divide the proof into two cases, depending on whether either $a$ or $b$ is the limit point of $H$ in $\partial X^h$. 

    \medskip\noindent \textbf{Case~1.} Suppose that one of $a$ or $b$ is the limit point of $H$.

    Without loss of generality, assume that $a$ is the limit point of $H^h$. Since $x \in H^h$, there exist a point $x_0 \in H \cap [a,b]$ and an integer $k \ge 0$ such that $x = (x_0,k)$. Choose a point $y_0 \in H$ satisfying
    \(d_H(x_0,y_0)=2^k.\) Since $H \in \mathcal{H}_G$, it is a coset of some parabolic subgroup of $G$ (see \autoref{defn_rel_hyp}); that is, there exists an element $g \in G$ and a parabolic subgroup $H_i \le G$ such that $H = g H_i$. Then, the action of the subgroup $g H_i g^{-1}$ on $G$ stabilizes the coset $H=gH_i$ and fixes the parabolic endpoint $a$. If $x_0 = gh_1$ and $y_0 = gh_2$ for some $h_1,h_2 \in H_i$, then the element $g(h_2 h_1^{-1})g^{-1} \in gH_ig^{-1}$ maps the geodesic $[a,b]$ to a geodesic $[a,c]$ passing through $y_0$, where $c = g(h_2 h_1^{-1})g^{-1}(b) \in \partial X^h$. 

    For each integer $n \ge 0$, define $x_n=(x_0,n)\in [a,b]$ and $y_n=(y_0,n)\in [a,c]$. Since $d_H(x_0,y_0)=2^k$, it follows that \(d_{X^h}(x_k,y_k)=1\).

    \begin{figure}[H]
        \centering
        \begin{tikzpicture}[scale = .6]
            \draw[gray] (-4,0) -- (4,0);
            \draw[gray] (-2,0) -- (-1.1,4) -- (1.1,4);
            \draw[gray,dashed,bend right=5] (-1.1,4) edge (-1,5);
            \draw[gray] (1.34,3)--(1.1,4);
            \draw[gray,dashed,bend left=5] (1.1,4) edge (1,5);
            \draw[gray,dashed,bend left] (-2,0) edge (-3,-1);

            \draw[gray!20, line width = 1.5mm] (1.34,3) -- (2,0);
            \draw[thick] (1.34,3) -- (2,0);
            \draw[gray!20, line width = 1.5mm, bend right] (2,0) edge (3,-1);
            \draw[thick, dashed, bend right] (2,0) edge (3,-1);

            \filldraw (1.34,3) circle (1pt);
            \filldraw (-1.1,4) circle (1pt);

            \node at (-2.4,0.3) {$x_0$};
            \node at (2.4,0.3) {$y_0$};
            \node at (-2.2,4) {$x=x_k$};
            \node at (1.6,4) {$y_k$};
            \node at (1.85,3) {$y_j$};            

            \node at (0,5.5) {$a$};
            \node at (-3.4,-1) {$b$};
            \node at (3.4,-1) {$c$};
            \node at (0,4.3) {$1$};
            \node at (0,0.4) {$2^k$};
            \node at (-4.4,0) {$H$};
            \node[white] at (4.4,0) {$H$};
        \end{tikzpicture}
        \caption{}
    \end{figure} 

    By \autoref{lem_preferred_path_vs_geodesics}, the preferred path $\gamma_{x_0,y_0} = [x_0,x_k] \cup [x_k,y_k] \cup [y_k,y_0]$ is at Hausdorff distance at most $5$ from any geodesic $[x_0,y_0]$ joining $x_0$ and $y_0$. The path $\gamma_{x_0,y_0}$ is a concatenation of two geodesics $\gamma_{x_0,x_k} = [x_0,x_k]$ and $\gamma_{x_k,y_0} = [x_k,y_k] \cup [y_k,y_0]$. It can be shown that the length($\gamma_{x_0,y_0}$) $\approx_{20}$ length($[x_0,y_0]$). Thus, without loss of generality, we may assume that $\gamma_{x_0,y_0}$ is a geodesic. 

    Recall from the Bounded Projection Lemma (\autoref{lem_Bounded_projection_lemma}) that there exist constants \(P=P(\delta)\ge 0\) and \(K_1=K_1(\delta)\ge 0\) such that if a geodesic lies entirely outside the $P$-neighbourhood of a geodesic $\gamma$, then its image under a nearest-point projection onto $\gamma$ has diameter at most $K_1$. We have taken \(Q=\max\{P,2\delta\}+1\). 

    \medskip
    \noindent {\textbf{Sub-case~1.1.}} Suppose $k \ge Q + K_1 +1$.
    
    Let $0\le j \le k$ be the largest integer such that $k-j \ge Q + K_1 + 1$, and consider the geodesic ray $[y_j,c] \subseteq [a,c]$. We claim that $[y_j,c]$ lies entirely outside the $Q$-neighbourhood of $[a,b]$. 

    Let $y_n \in [y_0,y_j]$ be an arbitrary vertex. For each $i \ge k$, the preferred path $\gamma_{y_n,x_i} \coloneqq [y_n,y_i] \cup [y_i,x_i]$ is a geodesic, and hence $d_{X^h}(y_n,x_i) = (i-n) + 1 \ge k-j+1 \ge Q$. Similarly, for each $0 \le i \le k$, since the preferred path $\gamma_{y_0,x_0}$ is a geodesic, we have $d_{X^h}(y_n,x_i) = (k-n) + 1 + (k-i) \ge k-j+1 \ge Q$. Thus, $[y_j,y_0]$ lies outside the $Q$-neighbourhood of $[x_0,a]$. If $d_{X^h}(y_n,z) \le Q$ for some $z \in [x_0,b]$, then $d_{X^h}(z,x_0) \le Q$. Hence, $2Q \ge d_{X^h}(y_n,x_0) = (k-n) + 1 + k \ge 2(Q + K_1 +1) +1$, which is a contradiction. Therefore, $[y_j,y_0]$ lies outside the $Q$-neighbourhood of $[x_0,b]$. Combining the above, we conclude that $[y_j,y_0]$ lies outside the $Q$-neighbourhood of $[a,b]$. 

    Now suppose that $[y_0,c]$ enters the $Q$-neighbourhood of $[a,b]$, and let $z \in [y_0,c]$ be the first entry point. Let $z' \in [a,b]$ satisfy $d_{X^h}(z,z') \le Q$. Then the subsegment $[y_j,z]$ lies outside the $Q$-neighbourhood of $[a,b]$, and hence, by \autoref{lem_Bounded_projection_lemma}, \(d_{X^h}(x_k,z') \le K_1,\) which implies \(d_{X^h}(x_k,z) \le K_1 + Q\). This contradicts the fact that \(d_{X^h}(x_k,z) \ge d_{X^h}(x_k,H) \ge K_1 + Q + 1\). Hence, the geodesic ray $[y_0,c]$, and therefore $[y_j,c]$, lies entirely outside the $Q$-neighbourhood of $[a,b]$. 

    Since $Q > 2\delta$, we have $c \neq a,b$. Therefore, setting $\alpha = [y_j,c]$ and $M' = K_1 + Q + 2$ completes the sub-case.

    \medskip 
    \noindent \textbf{Sub-case~1.2.} Suppose $k < Q + K_1 + 1$. 
    
    Choose a point $x' \in [x_0,a]$ such that \(d_{X^h}(x_0,x') = Q + K_1 + 1\). Applying Sub-case~1.1 to the point $x'$, we obtain a geodesic ray $\alpha: [0,\infty) \to X^h$ that lies entirely outside the $Q$-neighbourhood of $[a,b]$ and satisfies \(d_{X^h}(\alpha(0),x') \le M'\). Hence, $d_{X^h}(\alpha(0),x) \le M' + K_1 + Q +1 = 2K_1 + 2Q + 3$. This completes the sub-case.

    \medskip\noindent \textbf{Case~2.} Suppose that neither $a$ nor $b$ is the limit point of $H$.

    Let $\partial H^h=\{c_H\}$. Let $u_0,v_0\in H$ be the entry and exit points, respectively, of the geodesic $[a,b]$ into the horoball $H^h$. For each integer $n\ge 0$, define $u_n= (u_0,n) \in [u_0,c_H]$ and $v_n= (v_0,n) \in [v_0,c_H]$. Suppose that $2^{l-1} < d_{H}(u_0,v_0) \le 2^l$, for some integer $l \ge 0$. By \autoref{lem_preferred_path_vs_geodesics}, we may assume that the preferred path $\gamma_{u_0,v_0} = [u_0,u_l] \cup [u_l,v_l] \cup [v_l,v_0]$ is a geodesic joining $u_0$ and $v_0$ in $X^h$. Suppose $x = u_k$ for some $0 \le k \le l$.
    
    \begin{figure}[H]
        \centering
        \begin{tikzpicture}[scale=.7]
            \draw[gray] (-4,0) -- (4,0);
            \draw[gray] (-3,0) -- (-2,4) -- (2,4) -- (3,0);
            \draw[gray, bend left,dashed] (-3,0) edge (-4,-1);
            \draw[gray, bend right,dashed] (3,0) edge (4,-1);
            \draw[gray, bend right=5,dashed] (-2,4) edge (-1.8,5);
            \draw[gray, bend left=5,dashed] (2,4) edge (1.8,5);
            \draw[gray] (-2.25,3) -- (0,3)-- (0,2);
            \draw[gray!20, line width = 1.5mm] (0,2) edge (0,-1);
            \draw[thick] (0,2) edge (0,0);
            \draw[thick, dashed] (0,0)-- (0,-1);
            \draw[gray,dashed] (0,3)-- (2.25,3);

            \filldraw (-2.25,3) circle (1pt);
            \filldraw (0,2) circle (1pt); 

            \node at (-3.4,0.3) {$u_0$};
            \node at (3.4,0.3) {$v_0$};
            \node at (0.4,0.3) {$y_0$};
            \node at (0.4,2) {$y_j$};
            \node at (0.4,3.3) {$y_k$};
            \node at (-3.3, 3) {$x = u_k$};
            \node at (2.7,3) {$v_k$};
            \node at (-2.4,4) {$u_l$};
            \node at (2.4,4) {$v_l$};

            \node at (-4.4,-1.1) {$a$};
            \node at (4.4,-1.1) {$b$};
            \node at (0,-1.3) {$d$};
            \node at (0,4.3) {$1$};
            \node at (-1,3.3) {$1$};
            \node at (-1.6,0.3) {$2^k$};
            \node at (0, 5.5) {$c_H$};
        \end{tikzpicture}
        \caption{}
    \end{figure}

    \medskip\noindent \textbf{Sub-case~2.1.} Suppose $Q + K_1 + 1 \le k < l$.
    
    Choose a point $y_0$ on a geodesic in $H$ between $u_0$ and $v_0$ with $d_H(u_0,y_0) = 2^k$. As before, there exists a bi-infinite geodesic $[c_H,d]$ in $X^h$ passing through $y_0$, where $d \in \partial X^h$. For each integer $n \ge 0$, denote $y_n =(y_0,n) \in [y_0,c_H]$. As in Sub-case~1.1, let $0 \le j \le k$ be the largest integer such that $k-j \ge Q + K_1 +1$. Then the geodesic ray $[y_j,d]$ lies entirely outside the $Q$-neighbourhood of $[a,u_l]$ and $d(x,y_j) \le M'$. Furthermore, since $d_H(y_0,v_0) = d_H(u_0,v_0)-d_H(u_0,y_0) \ge 2^{l-1}-2^k \ge 2^k,$ since $l>k$. Hence $d_{X^h}(y_k,v_k)\ge 1$, and therefore the geodesic ray $[y_j,d]$ also lies outside the $Q$-neighbourhood of $[v_l,b]$. This completes the sub-case.

    \medskip\noindent \textbf{Sub-case~2.2.} Suppose $k < Q + K_1 + 1 <l$.
    
    Choose a point $x' \in [u_0,u_l]$ with $d_{X^h}(u_0,x') = Q + K_1 + 1$. Applying Sub-case~2.1 to $x'$, we obtain a geodesic ray $\alpha$ that entirely lies outside the $Q$-neighbourhood of $[a,b]$ and $d_{X^h}(\alpha(0),x') \le M'$. Thus, $d_{X^h}(\alpha(0),x) \le M' + Q + K_1 + 1$. This completes the sub-case. 

    \medskip\noindent \textbf{Sub-case~2.3.} Suppose $Q + K_1 + 1 \le k = l$ or $ k \le l \le Q + K_1 + 1$.
    
    Consider the vertical ray $\alpha \coloneq [y_{l+Q+1},c_H]$. Then $\alpha$ lies outside the $Q$-neighbourhood of $[a,b]$ and $d_{X^h}(\alpha(0),x) \le 2Q + K_1 + 2$. This completes the sub-case.

    \medskip
    In all (sub) cases, there exists a geodesic ray $\alpha$ based at a point within distance $M$ of $x$ that lies entirely outside the $Q$-neighbourhood of $[a,b]$, where $M = M' + Q + K_1 + 1$ is a constant depending only on $\delta$. This completes the proof of the lemma.
\end{proof}
    
\begin{proof}[Proof of \autoref{prop_a_point_on_geodesic_vs_QP}]
    Let $\alpha$ be a geodesic ray as considered in \autoref{lem_a_point_and_a_ray}, based at a point $y \in X^h$ and approaching a boundary point $c \in \partial X^h$ such that $\alpha$ lies entirely outside the $Q$-neighbourhood of the geodesic $[a,b]$, where $c \ne a,b$, $Q=\max\{P,2\delta\}+1$ and $P = P(\delta) \ge 0$ is the constant given by the Bounded Projection Lemma (\autoref{lem_Bounded_projection_lemma}). Consider the geodesic triangle $\triangle(a,b,c)$.

    \begin{figure}[H]
        \centering
        \begin{tikzpicture}
            \draw (-3.5,0)--(3.5,0);
            \draw[bend left=40] (-1.5,2.5) edge (-3.5,0);
            \draw[bend right=25] (-1.5,2.5) edge (3.5,0);
            \draw[bend right=20] (-1.5,2.5) edge (0,0.4); 
            \draw[dashed] (0,0) -- (0,0.4); 

            \filldraw (-3.5,0) circle (1pt);
            \filldraw (3.5,0) circle (1pt);
            \filldraw (-1.5,2.5) circle (1pt);
            \filldraw (0,0.4) circle (1pt); 
            \filldraw (0,0) circle (1pt);

            \node at (-3.8,0) {$a$};
            \node at (3.8,0) {$b$};
            \node at (-1.5,2.8) {$c$};
            \node at (-1,0.4) {$\alpha(0)=y$};
            \node at (0,-0.3) {$x$};
        \end{tikzpicture}
        \caption{}
    \end{figure}
    
    We claim that the paths $[a,x] \cup [x,y] \cup [y,c]$ and $[c,y] \cup [y,x] \cup [x,b]$ are uniform quasigeodesics. To verify this, we consider the following cases.

    If $w \in [a,x]$ and $z \in [x,y]$, then
    \[
    d_{X^h}(w,x) + d_{X^h}(x,z) \le d_{X^h}(w,z) + 2M.
    \]

    If $w \in [c,y]$ and $z \in [x,y]$, then
    \[
    d_{X^h}(w,y) + d_{X^h}(y,z) \le d_{X^h}(w,z) + 2M.
    \] 

    If $w \in [a,x]$ and $z \in [y,c]$, then the segment $[y,z] \subseteq [y,c]$ lies entirely outside the $Q$-neighbourhood of $[a,b]$, with $Q>P$. Let $z'$ be a nearest point projection of $z$ onto $[a,b]$. By the Bounded Projection Lemma (\autoref{lem_Bounded_projection_lemma}), we have \(d_{X^h}(y,z) \le d_{X^h}(z,z') + K_1 + M\). Therefore, 
    \begin{align*}
        d_{X^h}(w,x) + d_{X^h}(x,y) + d_{X^h}(y,z) 
        & \le (d_{X^h}(w,z) + d_{X^h}(z,x)) + M + d_{X^h}(y,z) \\
        & \le d_{X^h}(w,z) + 2M + 2d_{X^h}(y,z)\\
        & \le d_{X^h}(w,z) + 2M + 2(d_{X^h}(z,z') + K_1 + M)\\
        & \le 3 d_{X^h}(z,w) + 2K_1+ 4M.
    \end{align*}   
    Thus, the path $[a,x] \cup [x,y] \cup [y,c]$ is a $(3,4M + 2K_1)$-quasigeodesic. A similar argument shows that $[c,y] \cup [y,x] \cup [x,b]$ is also a $(3,4M + 2K_1)$-quasigeodesic. 

    By \autoref{Prop_Stability_of_quasigeodesics}, there exists a constant $K_0 = K_0(\delta, 4M + 2K_1) \ge 0$ such that the point $x$ lies within distance $K_0$ of both sides $[a,c]$ and $[b,c]$ of $\triangle(a,b,c)$. Consequently, by \autoref{lem_bounded_diameter_of_points_on_a_geodesic}, there exists a constant $C_1 = C_1(\delta, K_0) \ge 0$ such that
    \[
    d_{X^h}(\pi_{X^h}(a,b,c), x) \le C_1.
    \]    
    Setting $R_1 = C_1(\delta, K_0)$ completes the proof of the proposition.
\end{proof}

\begin{cor}[Quasi-centres are coarsely dense]\label{cor_Quasi_centres_are_Coarsely_dense}
    The map $\pi_{X^h}: \partial^3 X^h \to X^h$ is coarsely surjective; that is, for every point $x \in X^h$, there exists a triple $(a,b,c) \in \partial^3 X^h$ such that $$d_{X^h}(\pi_{X^h}(a,b,c),x) \leq R_1,$$ 
    where $R_1 = R_1(\delta) \ge 0$ is the constant obtained from \autoref{prop_a_point_on_geodesic_vs_QP}. Moreover, the point $c \in \partial X^h$ can be chosen to be a parabolic endpoint. 
\end{cor}

\begin{proof}
    Without loss of generality, assume that $x$ is a vertex of $X^h$. Since cosets of parabolic subgroups cover the entire group $G$, there exists $H \in \mathcal{H}_G$ such that $x \in H^h$. Let $c_H \in \partial X^h$ denote the parabolic endpoint corresponding to $H$. Let $[a,c_H]$ be a geodesic, where $a \in \partial X^h\setminus\{c_H\}$ is chosen so that $x \in [a,c_H]$. By \autoref{prop_a_point_on_geodesic_vs_QP}, there exists a constant $R_1 = R_1(\delta) \ge 0$ and a point $b \in \partial X^h$, distinct from both $a$ and $c_H$, such that 
    $$d_{X^h}(\pi_{X^h}(a,b,c_H),x) \le R_1.$$
    This completes the proof.
\end{proof}

\begin{rem}\label{rem_a_point_vs_QP}
     The conclusion of \autoref{prop_a_point_on_geodesic_vs_QP} also holds in the setting of non-elementary Gromov-hyperbolic groups. But due to the absence of parabolic subgroups, the proof will differ. We will give a sketch of the proof.  

     Let $G$ be a non-elementary hyperbolic group with Cayley graph $X$. Let $a,b \in \partial X$, and let $x \in X$ be a point lying on a geodesic $[a,b]$ joining $a$ and $b$. It is well known that the set of pairs \(\{(g^{+\infty}, g^{-\infty}) \mid g \in G \text{ is hyperbolic}\} \subset \partial X \times \partial X \) is dense in $\partial X \times \partial X$. Hence, there exists a sequence of hyperbolic elements $\{g_k\} \subset G$ such that $(g_k^{+\infty}, g_k^{-\infty})$ converges to $(a,b)$ in $\partial X \times \partial X$. Then, there exists $k$ large enough and a point \(x_k \in [g_k^{+\infty}, g_k^{-\infty}]\) such that $d_X(x,x_k)$ is uniformly bounded. Let $c \in \partial X$ be any point distinct from $a,b,g_k^{+\infty},g_k^{-\infty}$. Consider the quasi-centre \(\pi_X(g_k^{+\infty}, g_k^{-\infty}, c),\) which lies in a uniformly bounded neighbourhood of the geodesic $[g_k^{+\infty}, g_k^{-\infty}]$. Since $g_k$ acts by translation along this geodesic, there exists a natural number $n(k)$ such that \(g_k^{n(k)}\bigl(\pi_X(g_k^{+\infty}, g_k^{-\infty}, c)\bigr) = \pi_X(g_k^{+\infty}, g_k^{-\infty}, g_k^{n(k)}(c))\) lies in a uniformly bounded neighbourhood of $x_k$. Consequently, the point $x \in [a,b]$ lies in a uniformly bounded neighbourhood of \(\pi_X(a,b,g_k^{n(k)}(c))\). This establishes the desired conclusion. 
\end{rem} 

We now generalize the previous results to pairs of points in the cusped space.
 
\begin{prop}\label{prop_two_points_vs_quasi_projections}
    Given $\delta \ge 0$, there exists a constant $R_2 = R_2(\delta) \ge 0$ such that the following holds. Let $G$ be a $\delta$-relatively hyperbolic group with a Cayley graph $X$. For any two distinct points $x, y \in X^h$, the following statements hold:
    \begin{enumerate}[$(i)$]
        \item There exists a bi-infinite geodesic $\gamma$ in $X^h$ such that both $x$ and $y$ lie within distance at most $R_2$ from $\gamma$. 

        \item There exist four distinct points $a,b,c,d \in \partial X^h$ such that 
        $$d_{X^h}(x, \pi_{X^h}(a,b,c)) \le R_2 \quad \text{and} \quad d_{X^h}(y, \pi_{X^h}(a,c,d)) \le R_2.$$ 

    \end{enumerate}
\end{prop} 

\begin{proof} 
    $(i)$ Let $x,y \in X^h$ be two distinct points. By \autoref{cor_Quasi_centres_are_Coarsely_dense}, there exists a constant $R_1 = R_1(\delta) \ge 0$ and triples $(a,b,c), (u,v,w) \in \partial^3 X^h$ such that $$d_{X^h}(x, \pi_{X^h}(a,b,c)) \le R_1 \quad \text{and} \quad d_{X^h}(y, \pi_{X^h}(u,v,w)) \le R_1.$$ 
    Therefore, the point $x$ (resp. $y$) lies within distance $R_1 + 5\delta$ from each side of the geodesic triangle $\triangle(a,b,c)$ (resp. $\triangle(u,v,w)$). We consider the following cases: 

    \medskip\noindent \textbf{Case~1.} Suppose $\{a,b,c\} = \{u,v,w\}$. 
    
    Then $\pi_{X^h}(a,b,c) = \pi_{X^h}(u,v,w)$, and hence both $x$ and $y$ lie within distance $R_1 + 5\delta$ from each of the three geodesics $[a,b],[b,c]$ and $[a,c]$. 

    \noindent \textbf{Case~2.} Suppose that $\{a,b,c\} \cap \{u,v,w\} \neq \emptyset$. 
    
    Without loss of generality, assume $b=v$. Consider the geodesic triangle $\triangle(a,b,u)$ if $a \ne u$; otherwise, choose any geodesic triangle of the form $\triangle(p,b,q)$, where $p \in \{a,c\}$ and $q \in \{u,w\}$, and $p \ne q$. 
    
    \begin{figure}[H]
        \centering
        \begin{tikzpicture}
            \draw (0,0) circle (2cm);
            \draw[bend right=70, thick] (2*cos 180,2*sin 180) edge (2*cos 140,2*sin 140);
            \draw[bend right=50, thick] (2*cos 140,2*sin 140) edge (2*cos 60,2*sin 60);
            \draw[bend right, thick] (2*cos 180,2*sin 180) edge (2*cos 60,2*sin 60);

            \draw[bend left, thick] (2*cos 300,2*sin 300) edge (2*cos 60,2*sin 60);
            \draw[bend right=50, thick] (2*cos 60,2*sin 60) edge (2*cos 0,2*sin 0);
            \draw[bend left=50, thick] (2*cos 300,2*sin 300) edge (2*cos 0,2*sin 0);

            \draw[bend right, thick,dashed] (2*cos 300,2*sin 300) edge (2*cos 180,2*sin 180);

            \filldraw (-.95,.6) circle (1pt);
            \filldraw (1,0) circle (1pt);

            \node at (-2.3,0) {$a$};
            \node at (1.5,2) {$b = v$};
            \node at (-1.78,1.42) {$c$};
            \node at (1.1,-2) {$u$};
            \node at (2.3,0) {$w$};

            \node at (-.7,.6) {$x$};
            \node at (.95,.3) {$y$};
            \end{tikzpicture}
        \caption{}
    \end{figure}
    
    Since $\triangle(a,b,u)$ is $\delta$-slim, the point $x$ lies within distance $R_1 + 6\delta$ from either $[a,b]$ or $[a,u]$, and similarly,  the point $y$ lies within distance $R_1 + 6\delta$ from either $[b,u]$ or $[a,u]$. Hence, both $x$ and $y$ lie within distance $R_1 + 6\delta$ of a common geodesic among $[a,b], [b,u]$ or $[a,u]$. 

    \medskip\noindent \textbf{Case~3.} Suppose $\{a,b,c\} \cup \{u,v,w\} = \emptyset$.
    
    Consider the four distinct points $a,b,c,v \in \partial X^h$. By \autoref{lem_one_of_three_cross-ratio_is_bdd}, one of the cross-ratios $|[a,b,c,v]|$, $|[a,c,b,v]|$ or $|[c,a,b,v]|$ is uniformly bounded by at most $C_6$. 

    \begin{figure}[H]
    \centering        
        \begin{tikzpicture}[scale=.9]
            \draw (0,0) circle (2cm);
            \draw[bend right=70, thick] (2*cos 180,2*sin 180) edge (2*cos 140,2*sin 140);
            \draw[bend right=50, thick] (2*cos 140,2*sin 140) edge (2*cos 60,2*sin 60);
            \draw[bend right, thick] (2*cos 180,2*sin 180) edge (2*cos 60,2*sin 60);

            \draw[bend left, thick] (2*cos 250,2*sin 250) edge (2*cos 0,2*sin 0);
            \draw[bend right=50, thick] (2*cos 0,2*sin 0) edge (2*cos 310,2*sin 310);
            \draw[bend left=50, thick] (2*cos 250,2*sin 250) edge (2*cos 310,2*sin 310);

            \draw[thick,dashed] (2*cos 180,2*sin 180) edge (2*cos 0,2*sin 0);
            \draw[bend right=20, thick,dashed] (2*cos 140,2*sin 140) edge (2*cos 0,2*sin 0);

            \filldraw (-.95,.6) circle (1pt);
            \filldraw (.7,-.8) circle (1pt);

            \node at (-2.3,0) {$a$};
            \node at (1.15,2) {$b$};
            \node at (-1.78,1.42) {$c$};
            \node at (2.3*cos 310,2.3*sin 310) {$w$};
            \node at (2.3,0) {$v$};
            \node at (2.3*cos 250,2.3*sin 250) {$u$};

            \node at (-1.1,.38) {$x$};
            \node at (1,-.55) {$y$};
        \end{tikzpicture}
        \hspace{.1cm}
        \begin{tikzpicture}[scale=.9]
            \draw (0,0) circle (2cm);
            \draw[bend right=70, thick] (2*cos 180,2*sin 180) edge (2*cos 140,2*sin 140);
            \draw[bend right=50, thick] (2*cos 140,2*sin 140) edge (2*cos 60,2*sin 60);
            \draw[bend right, thick] (2*cos 180,2*sin 180) edge (2*cos 60,2*sin 60);

            \draw[bend left, thick] (2*cos 250,2*sin 250) edge (2*cos 0,2*sin 0);
            \draw[bend right=50, thick] (2*cos 0,2*sin 0) edge (2*cos 310,2*sin 310);
            \draw[bend left=50, thick] (2*cos 250,2*sin 250) edge (2*cos 310,2*sin 310);

            \draw[thick,dashed] (2*cos 180,2*sin 180) edge (2*cos 0,2*sin 0);
            \draw[bend right=50, thick,dashed] (2*cos 60,2*sin 60) edge (2*cos 0,2*sin 0);

            \filldraw (-.95,.6) circle (1pt);
            \filldraw (.7,-.8) circle (1pt);

            \node at (-2.3,0) {$a$};
            \node at (1.15,2) {$b$};
            \node at (-1.78,1.42) {$c$};
            \node at (2.3*cos 310,2.3*sin 310) {$w$};
            \node at (2.3,0) {$v$};
            \node at (2.3*cos 250,2.3*sin 250) {$u$};

            \node at (-1.1,.38) {$x$};
            \node at (1,-.55) {$y$};
        \end{tikzpicture}
        \hspace{.1cm}
        \begin{tikzpicture}[scale=.9]
            \draw (0,0) circle (2cm);
            \draw[bend right=70, thick] (2*cos 180,2*sin 180) edge (2*cos 140,2*sin 140);
            \draw[bend right=50, thick] (2*cos 140,2*sin 140) edge (2*cos 60,2*sin 60);
            \draw[bend right, thick] (2*cos 180,2*sin 180) edge (2*cos 60,2*sin 60);

            \draw[bend left, thick] (2*cos 250,2*sin 250) edge (2*cos 0,2*sin 0);
            \draw[bend right=50, thick] (2*cos 0,2*sin 0) edge (2*cos 310,2*sin 310);
            \draw[bend left=50, thick] (2*cos 250,2*sin 250) edge (2*cos 310,2*sin 310);

            \draw[bend right=20, thick,dashed] (2*cos 140,2*sin 140) edge (2*cos 0,2*sin 0);
            \draw[bend right=50, thick,dashed] (2*cos 60,2*sin 60) edge (2*cos 0,2*sin 0);

            \filldraw (-.95,.6) circle (1pt);
            \filldraw (.7,-.8) circle (1pt);

            \node at (-2.3,0) {$a$};
            \node at (1.15,2) {$b$};
            \node at (-1.78,1.42) {$c$};
            \node at (2.3*cos 310,2.3*sin 310) {$w$};
            \node at (2.3,0) {$v$};
            \node at (2.3*cos 250,2.3*sin 250) {$u$};

            \node at (-1.1,.38) {$x$};
            \node at (1,-.55) {$y$};
        \end{tikzpicture}
    \caption{}
    \end{figure}
    
    Suppose, $|[a,b,c,v]| \le C_6$. Then, by \autoref{lem_cross_ratios_vs_quasi_centres}, we have $$d_{X^h}(\pi_{X^h}(a,b,c), \pi_{X^h}(a,c,v) \le C_5 + C_6.$$
    Hence, $x$ lies within distance $R_1 + C_5 + C_6$ to the quasi-centres of $\triangle(a,c,v)$. Similarly, if $|[a,c,b,v]|$ or $|[c,a,b,v]|$ is bounded above by $C_6$, then the point $x$ lies within distance $R_1 + C_5 + C_6$ to the quasi-centres of $\triangle(a,b,v)$ (respectively $\triangle(b,c,v)$).

    In each case, $x$ lies within distance $R_1 + C_5 + C_6$ to the quasi-centres of a triangle that shares a vertex $v$ with the triangle $\triangle(u,v,w)$. Therefore, by applying Case~2, we conclude that both $x$ and $y$ lie within distance $R_1 + C_5 + C_6 + 6\delta$ from some bi-infinite geodesic. 

    Setting $R_2' = R_1 + C_5 + C_6 + 6\delta$ completes the proof of part $(i)$.

    \medskip 
    $(ii)$ By part $(i)$, both $x$ and $y$ lie within distance $R_2'$ from some bi-infinite geodesic $[a,c]$, where $a,c \in \partial X^h$. Applying \autoref{prop_a_point_on_geodesic_vs_QP}, there exist $b,d \in \partial X^h$, each distinct from both $a$ and $c$, such that 
    $$d_{X^h}(x,\pi_{X^h}(a,b,c)) \le R_1 + R_2' \quad \text{and} \quad d_{X^h}(y,\pi_{X^h}(a,c,d)) \le R_1 + R_2'.$$

    If $b \ne d$, then we are done. Otherwise, choose a point $y' \in [a,c]$ such that $d_{X^h}(y,y') = 2R_1 + R_2' + 1$. Again by \autoref{prop_a_point_on_geodesic_vs_QP}, there exists a point $d' \in \partial X^h$, distinct from $a$ and $c$, such that $d_{X^h}(y',\pi_{X^h}(a,c,d')) \le R_1.$    
    Now, suppose for contradiction that $ b = d'$. Then we would have 
    $$d_{X^H}(y,y') \le d_{X^h}(y,\pi_{X^h}(a,c,b)) + d_{X^h}(y',\pi_{X^h}(a,c,d')) \le 2R_1+R_2',$$
    which contradicts the choice of $y'$. Therefore, $b \ne d'$, and we obtain four distinct points $a,b,c,d'$ such that
    $$d_{X^h}(x,\pi_{X^h}(a,b,c)) \le R+R_2' \quad \text{and} \quad d_{X^h}(y,\pi_{X^h}(a,c,d')) \le 3R_1 + R_2' + 1.$$ 
    Setting $R_2''=3R + R_2' + 1$ completes the proof of part $(ii)$.
    

    Hence, setting $R_2 = \max\{R_2' + R_2'' \}$, completes the proof of the proposition.
\end{proof} 


\subsection{Quasi-isometries Induced by Relative Quasi-M\"{o}bius Maps} \label{subsec_proof_of_thm_2} 

Let $(G_1, \mathcal{H}_{G_1})$ and $(G_2, \mathcal{H}_{G_2})$ be two $\delta$--relatively hyperbolic groups, with Cayley graphs $X$ and $Y$, respectively. Suppose that \(f \colon \partial X^h \to \partial Y^h\) is a $\psi$-quasi-M\"{o}bius homeomorphism between their Bowditch boundaries, for some continuous distortion function $\psi \colon [0,\infty) \to [0,\infty)$. In this subsection, we prove \autoref{thm_main_theorem_2}. We first show that the control function $\psi$ of the quasi-M\"obius homeomorphism $f$ can be effectively replaced by a linear function. 

\begin{lem}\label{lem_linearity_for_QM}
    Let $f:\partial X^h\to\partial Y^h$ be a $\psi$-quasi-M\"{o}bius homeomorphism between the Bowditch boundaries, for some continuous distortion function $\psi : [0,\infty) \to [0,\infty)$. Then there exists a linear distortion function $\psi'(t) = At+B$, where the constants $A \ge 1$ and $B \ge 0$ depend only on $\delta$ and $\psi$, such that $f$ is also a $\psi'$-quasi-M\"{o}bius homeomorphism.
\end{lem}

\begin{proof}
    Let $a,b,c,d \in \partial X^h$ be pairwise distinct points. Suppose $p,q \in [a,c]$ are points such that \(d_{X^h}(p,\pi_{X^h}(a,b,c))\le 5\delta\) and \(d_{X^h}(q,\pi_{X^h}(a,c,d))\le 5\delta\). 
    Choose points $p=p_0,p_1,\dots,p_n=q$ on the geodesic $[a,c]$ such that for each $1\le i \le n-1$,  $p_i \in [p_{i-1},p_{i+1}]$, $d_{X^h}(p_i,p_{i-1}) = 2R_1+1$, and $d_{X^h}(p_n,p_{n-1}) \le 2R_1+1$, where $R_1 = R_1(\delta) \ge 0$ is the constant as considered in \autoref{prop_a_point_on_geodesic_vs_QP}.
    
    \begin{figure}[H]
        \centering
        \begin{tikzpicture}[scale=.9]
            \draw[line width=1pt] (-3.5,0) -- (3.5,0);
            \draw (-3.5,0) -- (-3,2.5) -- (3.5,0);
            \draw[line width=.4pt] (-3.5,0) -- (-2,2.5) -- (3.5,0);
            \draw[line width=.8pt] (-3.5,0) -- (2,2.5) -- (3.5,0);
            \draw[line width=1pt] (-3.5,0) -- (3,2.5) -- (3.5,0); 

            \filldraw (-2,0) circle (1pt);
            \filldraw (-1,0) circle (1pt);
            \filldraw (1,0) circle (1pt);
            \filldraw (2,0) circle (1pt);

            \node at (-3.8,0) {$a$};
            \node at (3.8,0) {$b$};
            \node at (-2,-.3) {$p_0$};
            \node at (-1,-.3) {$p_1$};
            \node at (0,-.3) {$\dots$};
            \node at (1,-.3) {$p_{n-1}$};
            \node at (2,-.3) {$p_n$};

            \node at (-3,2.8) {$b_0$};
            \node at (-2,2.8) {$b_1$};
            \node at (0,2.8) {$\dots$};
            \node at (2,2.8) {$b_{n-1}$};
            \node at (3,2.8) {$b_n$};
        \end{tikzpicture}
        \caption{}
    \end{figure} 
    
    By \autoref{prop_a_point_on_geodesic_vs_QP}, there exist boundary points $b=b_0,b_1,\dots,b_n=d$ on $\partial X^h$ such that for each $1 \le i \le n-1$,
    $$d_{X^h}(p_i,\pi_{X^h}(a,b_i,c)) \le R_1.$$ 
    Moreover, all points $b_i$ are distinct; otherwise, for some $i\neq j$, we would have \(d_{X^h}(p_i,p_j) \le 2R_1\), contradicting the construction of the sequence $\{p_i\}$. Hence, by \autoref{lem_cross_ratios_vs_quasi_centres}, we have
    $|[a,b_{i-1},c,b_i]| \le 4R_1 + C_5+1$. Since $f$ is $\psi$-quasi-M\"{o}bius, then $|[fa,fb_{i-1},fc,fb_{i}]| \le \psi (|[a,b_{i-1},c,b_i]|)$.    
    Set $L = \sup \{\psi(t)\;:\; t \in [0,4R_1 + C_5 + 1] \}$. By  \autoref{lem_cross_ratios_vs_quasi_centres}, we have
    \[
    d_{Y^h}(\pi_{Y^h}(fa,fb_{i-1},fc), \pi_{Y^h}(fa,fb_i,fc)) \le L + C_5.
    \]
    Therefore,
    \begin{align*}
        |[fa,fb,fc,fd]| & \le d_{Y^h}(\pi_{Y^h}(fa,fb,fc),\pi_{Y^h}(fa,fc,fd)) + C_5 \\
        & \le \sum\limits_{i=1}^n d_{Y^h}(\pi_{Y^h}(fa,fb_{i-1},fc), \pi_{Y^h}(fa,fb_i,fc)) + C_5 \\
        & \le n(L + C_5) + C_5 \\
        & \le \frac{(L+C_5)}{2R_1 + 1} (d_{X^h}(p,q) + 1) + C_5 \\
        &\le \frac{(L+C_5)}{2R_1 + 1}( |[a,b,c,d]| + 10\delta + C_5 + 1) + C_5.
    \end{align*}
    Hence, setting $A = \max\{ \frac{L + C_5}{2R_1 + 1},1\}$ and $B = \frac{(L+C_5)}{2R_1 + 1}(10\delta + C_5 + 1) + C_5$, we conclude that $$|[fa,fb,fc,fd]| \precapprox_{A,B} |[a,b,c,d]|.$$
    
    Since $f^{-1}$ is also $\psi$-quasi-M\"{o}bius, a similar argument applies to show that for any four distinct points $a',b',c',d' \in \partial Y^h$, we have
    $$|[f^{-1}(a'),f^{-1}(b'),f^{-1}(c'),f^{-1}(d')]| \precapprox_{A,B} |[a',b',c',d']|.$$
    This proves the lemma.
\end{proof}


Our next aim is to construct a quasi-isometry $\Phi f: X^h\to Y^h$ from a given $\psi$-quasi-M\"{o}bius homeomorphism $f: \partial X^h \to \partial Y^h$. For that, we first need the following construction of a uniformly bounded subset in $Y^h$.

\begin{defn}\label{defn_defining_set_E(.)}
    Let $R = \max \{R_1, R_2\}$, where $R_1 = R_1(\delta) \ge 0$ and $R_2 = R_2(\delta) \ge 0$ are the constants obtained from \autoref{cor_Quasi_centres_are_Coarsely_dense} and \autoref{prop_two_points_vs_quasi_projections}, respectively. For each point $x \in X^h$, define the set 
    \[ 
    E(x) \coloneqq \pi_{Y^h} \circ f \circ \pi_{X^h}^{-1}(\overline{B}(x,R)),
    \]
    where $\overline{B}(x,R)$ denotes the closed ball of radius $R$ centred at $x \in {X}^h$. Here, $\pi_{X^h}^{-1}(\overline{B}(x,R))$ denotes the collection of all boundary triples $(a,b,c) \in \partial^3X^h$ such that their quasi-centre lies within distance $R$ of $x$; that is, $d_{X^h}(\pi_{X^h}(a,b,c),x) \le R$.
\end{defn}


\begin{lem} \label{lem_E(x)_is_bdd}
    There exists a constant $D = D(\delta, \psi) \ge 0$ such that for every point $x \in X^h$, the set $E(x)$ is a nonempty, bounded subset of $Y^h$, which has diameter at most $D$.
\end{lem}

\begin{proof}
    For each \(x \in X^h\), the non-emptiness of the set \(E(x)\) follows from \autoref{cor_Quasi_centres_are_Coarsely_dense}. To verify boundedness, let $y_1=\pi_{Y^h}(fa,fb,fc)$ and $y_2 = \pi_{Y^h}(fu,fv,fw)$ be two arbitrary points in $E(x)$, where $(a,b,c), (u,v,w) \in \partial^3X^h$ satisfying
    \[
        d_{X^h}(\pi_{X^h}(a,b,c), x) \le R \quad \text{and} \quad d_{X^h}(\pi_{X^h}(u,v,w), x) \le R.
    \]

    If $\{a,b,c\} = \{u,v,w\}$, then $y_1 = y_2$, and hence there is nothing more to prove. 
    Otherwise, consider the case where $\{a,b,c\} \cap \{u,v,w\} = \emptyset$; the remaining cases follow similarly. 

    We apply \autoref{lem_one_of_three_cross-ratio_is_bdd} to the quadruple $(a,b,c,v)$ of distinct points. After possibly permuting $(a,b,c)$, we may assume that
    \[
    |[a,b,c,v]| \le C_6.
    \] 
    Then, by \autoref{lem_cross_ratios_vs_quasi_centres}, we have 
    \[
    d_{X^h}(\pi_{X^h}(a,b,c), \pi_{X^h}(a,c,v)) \le C_5 + C_6.
    \]
    Next, we apply \autoref{lem_one_of_three_cross-ratio_is_bdd} to the quadruple $(a,c,v,w)$ of distinct points. Then, at least one of the three cross-ratios \(|[a,c,v,w]|\), \(|[c,a,v,w]|\), or \(|[a,v,c,w]|\) is bounded by $C_6$.

    \medskip\noindent\textbf{Case~1.} Suppose that $|[a,c,v,w]| \le C_6$. 
    
    By \autoref{lem_cross_ratios_vs_quasi_centres}, the quasi-centres of the triangles $\triangle(a,c,v)$ and $\triangle(a,v,w)$ are within distance at most $C_5 + C_6$ of each other.
    
    \begin{figure}[H]
        \centering
        \begin{tikzpicture}[scale=2.1]
            \coordinate (O) at (0,0);
            \coordinate (A) at (1,0);
             \tkzDrawCircle[black](O,A);
            \node at ({1.1*cos (150)},{1.1*sin (150)}) {$a$};
            \node at ({1.1*cos (100)},{1.1*sin (100)}) {$b$};
            \node at ({1.1*cos (50)},{1.1*sin (50)}) {$c$};
            \node at ({1.1*cos (-20)},{1.1*sin (-20)}) {$v$};
            \node at ({1.1*cos (-120)},{1.1*sin (-120)}) {$w$};
            \node at ({1.1*cos (-160)},{1.1*sin (-160)}) {$u$};

            \coordinate(z1) at({cos (100+50)},{sin (100+50)});
            \coordinate(z2) at({cos (100)},{sin (100)});
            \tkzDefCircle[orthogonal through=z1 and z2](O,A) \tkzGetPoint{B1};
            \tkzClipCircle(O,A);
             \tkzDrawCircle[black](B1,z2);

            \coordinate(z1) at({cos (100)},{sin (100)});
            \coordinate(z2) at({cos (100-50)},{sin (100-50)});
            \tkzDefCircle[orthogonal through=z1 and z2](O,A) \tkzGetPoint{B1};
            \tkzClipCircle(O,A);
             \tkzDrawCircle[black](B1,z2);
            
            \coordinate(z1) at({cos (100-50)},{sin (100-50)});
            \coordinate(z2) at({cos (100+50)},{sin (100+50)});
            \tkzDefCircle[orthogonal through=z1 and z2](O,A) \tkzGetPoint{B1};
            \tkzClipCircle(O,A);
             \tkzDrawCircle[black](B1,z2);            
            \coordinate(z1) at({cos (-20)},{sin (-20)});
            \coordinate(z2) at({cos (50)},{sin (50)});
            \tkzDefCircle[orthogonal through=z1 and z2](O,A) \tkzGetPoint{B1};
            \tkzClipCircle(O,A);
            \tkzDrawCircle[dashed, black](B1,z2);

            \coordinate(z2) at({cos (150)},{sin (150)});
            \coordinate(z1) at({cos (340)},{sin (340)});
            \tkzDefCircle[orthogonal through=z1 and z2](O,A) \tkzGetPoint{B1};
            \tkzClipCircle(O,A);
            \tkzDrawCircle[dashed, black](B1,z2);
            
            \coordinate(z1) at({cos (-120)},{sin (-120)});
            \coordinate(z2) at({cos (150)},{sin (150)});
            \tkzDefCircle[orthogonal through=z1 and z2](O,A) \tkzGetPoint{B1};
            \tkzClipCircle(O,A);
            \tkzDrawCircle[dashed, black](B1,z2);
            \coordinate(z1) at({cos (-20)},{sin (-20)});
            \coordinate(z2) at({cos (-120)},{sin (-120)});
            \tkzDefCircle[orthogonal through=z1 and z2](O,A) \tkzGetPoint{B1};
            \tkzClipCircle(O,A);
             \tkzDrawCircle[black](B1,z2);
            
            \coordinate(z1) at({cos (-120)},{sin (-120)});
            \coordinate(z2) at({cos (-160)},{sin (-160)});
            \tkzDefCircle[orthogonal through=z1 and z2](O,A) \tkzGetPoint{B1};
            \tkzClipCircle(O,A);
             \tkzDrawCircle[black](B1,z2);

            \coordinate(z1) at({cos (-160)},{sin (-160)});
            \coordinate(z2) at({cos (-20)},{sin (-20)});
            \tkzDefCircle[orthogonal through=z1 and z2](O,A) \tkzGetPoint{B1};
            \tkzClipCircle(O,A);
             \tkzDrawCircle[black](B1,z2);
            \fill (-.1,.5) circle (.4pt);
            \fill (.1,0.2) circle (.4pt);
            \fill (-.1,-.1) circle (.4pt);
            \fill (-.27,-.37) circle (.4pt);
        \end{tikzpicture}
        \caption{}
    \end{figure}

    On the other hand, the triangle $\triangle(a,v,w)$ shares an edge $[v,w]$ with $\triangle(u,v,w)$. Hence, applying \autoref{lem_cross_ratios_vs_quasi_centres}, the cross-ratio satisfies  
    \begin{align*}
        |[v,u,w,a]| 
        &\le d_{X^h}(\pi_{X^h}(u,v,w),\pi_{X^h}(a,v,w)) + C_5 \\ 
        & \le d_{X^h}(\pi_{X^h}(u,v,w),\pi_{X^h}(a,b,c)) +d_{X^h}(\pi_{X^h}(a,b,c),\pi_{X^h}(a,v,c))\\ 
        &\qquad +d_{X^h}(\pi_{X^h}(a,v,c),\pi_{X^h}(a,v,w)) + C_5 \\
        &\le 2(R + C_5 + C_6) + C_5.
    \end{align*}
    Since $f$ is $\psi$-quasi-M\"{o}bius, we conclude
    \begin{align*}
        d_{Y^h}(y_1,y_2)&= d_{Y^h}(\pi_{Y^h}(fa,fb,fc),\pi_{Y^h}(fu,fv,fw))\\
        &\le d_{Y^h}(\pi_{Y^h}(fa,fb,fc),\pi_{Y^h}(fa,fv,fc))\\
        &\qquad +d_{Y^h}(\pi_{Y^h}(fa,fv,fc),\pi_{Y^h}(fa,fv,fw))\\
        &\qquad +d_{Y^h}(\pi_{Y^h}(fa,fv,fw),\pi_{Y^h}(fu,fv,fw))\\
        &\le |[fa,fb,fc,fv]|+|[fa,fc,fv,fw]|+|[fv,fu,fw,fa]|+3C_5\\
        &\le \psi(|[a,b,c,v]|)+\psi(|[a,c,v,w]|)+\psi(|[v,u,w,a]|)+3C_5
    \end{align*} 

    All the cross-ratios involved on the right-hand side are uniformly bounded above by constants depending only on $\delta$ and $R$, and $R$ is fixed. Hence, the whole expression is uniformly bounded by some constant, say $D' = D'(\delta,\psi)\ge 0$, which completes the proof for this case.

    \medskip\noindent\textbf{Case~2.} Suppose that $|[c,a,v,w]| \le C_6$.
    
    By applying \autoref{lem_cross_ratios_vs_quasi_centres}, we conclude that quasi-centres of $\triangle(a,c,v)$ and $\triangle(c,v,w)$ are bounded above by $C_6 + C_5$.

    \begin{figure}[H]
        \centering
        \begin{tikzpicture}[scale=2.1]
            \coordinate (O) at (0,0);
            \coordinate (A) at (1,0);
             \tkzDrawCircle[black](O,A);
            \node at ({1.1*cos (150)},{1.1*sin (150)}) {$a$};
            \node at ({1.1*cos (100)},{1.1*sin (100)}) {$b$};
            \node at ({1.1*cos (50)},{1.1*sin (50)}) {$c$};
            \node at ({1.1*cos (-20)},{1.1*sin (-20)}) {$v$};
            \node at ({1.1*cos (-120)},{1.1*sin (-120)}) {$w$};
            \node at ({1.1*cos (-160)},{1.1*sin (-160)}) {$u$};

            \coordinate(z1) at({cos (100+50)},{sin (100+50)});
            \coordinate(z2) at({cos (100)},{sin (100)});
            \tkzDefCircle[orthogonal through=z1 and z2](O,A) \tkzGetPoint{B1};
            \tkzClipCircle(O,A);
             \tkzDrawCircle[black](B1,z2);

            \coordinate(z1) at({cos (100)},{sin (100)});
            \coordinate(z2) at({cos (100-50)},{sin (100-50)});
            \tkzDefCircle[orthogonal through=z1 and z2](O,A) \tkzGetPoint{B1};
            \tkzClipCircle(O,A);
             \tkzDrawCircle[black](B1,z2);
            
            \coordinate(z1) at({cos (100-50)},{sin (100-50)});
            \coordinate(z2) at({cos (100+50)},{sin (100+50)});
            \tkzDefCircle[orthogonal through=z1 and z2](O,A) \tkzGetPoint{B1};
            \tkzClipCircle(O,A);
             \tkzDrawCircle[black, black](B1,z2);            
            \coordinate(z1) at({cos (-20)},{sin (-20)});
            \coordinate(z2) at({cos (50)},{sin (50)});
            \tkzDefCircle[orthogonal through=z1 and z2](O,A) \tkzGetPoint{B1};
            \tkzClipCircle(O,A);
            \tkzDrawCircle[dashed, black](B1,z2);

            \coordinate(z2) at({cos (150)},{sin (150)});
            \coordinate(z1) at({cos (340)},{sin (340)});
            \tkzDefCircle[orthogonal through=z1 and z2](O,A) \tkzGetPoint{B1};
            \tkzClipCircle(O,A);
            \tkzDrawCircle[dashed, black](B1,z2);
            
            \coordinate(z1) at({cos (-120)},{sin (-120)});
            \coordinate(z2) at({cos (50)},{sin (50)});
            \tkzDefCircle[orthogonal through=z1 and z2](O,A) \tkzGetPoint{B1};
            \tkzClipCircle(O,A);
            \tkzDrawCircle[dashed, black](B1,z2);
            \coordinate(z1) at({cos (-20)},{sin (-20)});
            \coordinate(z2) at({cos (-120)},{sin (-120)});
            \tkzDefCircle[orthogonal through=z1 and z2](O,A) \tkzGetPoint{B1};
            \tkzClipCircle(O,A);
             \tkzDrawCircle[black](B1,z2);
            
            \coordinate(z1) at({cos (-120)},{sin (-120)});
            \coordinate(z2) at({cos (-160)},{sin (-160)});
            \tkzDefCircle[orthogonal through=z1 and z2](O,A) \tkzGetPoint{B1};
            \tkzClipCircle(O,A);
             \tkzDrawCircle[black](B1,z2);

            \coordinate(z1) at({cos (-160)},{sin (-160)});
            \coordinate(z2) at({cos (-20)},{sin (-20)});
            \tkzDefCircle[orthogonal through=z1 and z2](O,A) \tkzGetPoint{B1};
            \tkzClipCircle(O,A);
             \tkzDrawCircle[black](B1,z2);
            \fill (-.1,.5) circle (.4pt);
            \fill (.1,0.2) circle (.4pt);
            \fill (.2,-.1) circle (.4pt);
            \fill (-.27,-.37) circle (.4pt);
        \end{tikzpicture}
        \caption{}
    \end{figure}

    Moreover, in this situation, $\triangle(c,v,w)$ shares an edge $[v,w]$ with $\triangle(u,v,w)$. Similar to Case~1, there exists a constant $D''=D''(\delta,\psi) \ge 0$ such that $d_{Y^h}(y_1,y_2)\le D'',$ which provides a uniform bound $D''$ for this case.

    \medskip\noindent\textbf{Case~3.} Suppose that $|[a,v,c,w]| \le C_6$. 
    
    By applying \autoref{lem_cross_ratios_vs_quasi_centres}, we get
    $$d_{X^h}(\pi_{X^h}(a,v,c),\pi_{X^h}(a,c,w))\le C_6 + C_5.$$ 
    In this case, neither $\triangle(a,c,v)$ nor $\triangle(a,c,w)$ shares an edge with $\triangle(u,v,w)$. 

    \begin{figure}[H]
        \centering
        \begin{tikzpicture}[scale=2.1]
            \coordinate (O) at (0,0);
            \coordinate (A) at (1,0);
            \tkzDrawCircle[black](O,A);
            \node at ({1.1*cos (150)},{1.1*sin (150)}) {$a$};
            \node at ({1.1*cos (100)},{1.1*sin (100)}) {$b$};
            \node at ({1.1*cos (50)},{1.1*sin (50)}) {$c$};
            \node at ({1.1*cos (-20)},{1.1*sin (-20)}) {$v$};
            \node at ({1.1*cos (-120)},{1.1*sin (-120)}) {$w$};
            \node at ({1.1*cos (-160)},{1.1*sin (-160)}) {$u$};

            \coordinate(z1) at({cos (100+50)},{sin (100+50)});
            \coordinate(z2) at({cos (100)},{sin (100)});
            \tkzDefCircle[orthogonal through=z1 and z2](O,A) \tkzGetPoint{B1};
            \tkzClipCircle(O,A);
             \tkzDrawCircle[black](B1,z2);

            \coordinate(z1) at({cos (100)},{sin (100)});
            \coordinate(z2) at({cos (100-50)},{sin (100-50)});
            \tkzDefCircle[orthogonal through=z1 and z2](O,A) \tkzGetPoint{B1};
            \tkzClipCircle(O,A);
             \tkzDrawCircle[black](B1,z2);
            
            \coordinate(z1) at({cos (100-50)},{sin (100-50)});
            \coordinate(z2) at({cos (100+50)},{sin (100+50)});
            \tkzDefCircle[orthogonal through=z1 and z2](O,A) \tkzGetPoint{B1};
            \tkzClipCircle(O,A);
             \tkzDrawCircle[black](B1,z2);            
            \coordinate(z1) at({cos (-20)},{sin (-20)});
            \coordinate(z2) at({cos (50)},{sin (50)});
            \tkzDefCircle[orthogonal through=z1 and z2](O,A) \tkzGetPoint{B1};
            \tkzClipCircle(O,A);
            \tkzDrawCircle[dashed, black](B1,z2);

            \coordinate(z2) at({cos (150)},{sin (150)});
            \coordinate(z1) at({cos (340)},{sin (340)});
            \tkzDefCircle[orthogonal through=z1 and z2](O,A) \tkzGetPoint{B1};
            \tkzClipCircle(O,A);
            \tkzDrawCircle[dashed, black](B1,z2);
            
            \coordinate(z1) at({cos (-120)},{sin (-120)});
            \coordinate(z2) at({cos (150)},{sin (150)});
            \tkzDefCircle[orthogonal through=z1 and z2](O,A) \tkzGetPoint{B1};
            \tkzClipCircle(O,A);
            \tkzDrawCircle[dashed, black](B1,z2);

             \coordinate(z1) at({cos (-120)},{sin (-120)});
            \coordinate(z2) at({cos (50)},{sin (50)});
            \tkzDefCircle[orthogonal through=z1 and z2](O,A) \tkzGetPoint{B1};
            \tkzClipCircle(O,A);
            \tkzDrawCircle[dashed, black](B1,z2);
            \coordinate(z1) at({cos (-20)},{sin (-20)});
            \coordinate(z2) at({cos (-120)},{sin (-120)});
            \tkzDefCircle[orthogonal through=z1 and z2](O,A) \tkzGetPoint{B1};
            \tkzClipCircle(O,A);
             \tkzDrawCircle[black](B1,z2);
            
            \coordinate(z1) at({cos (-120)},{sin (-120)});
            \coordinate(z2) at({cos (-160)},{sin (-160)});
            \tkzDefCircle[orthogonal through=z1 and z2](O,A) \tkzGetPoint{B1};
            \tkzClipCircle(O,A);
             \tkzDrawCircle[black](B1,z2);

            \coordinate(z1) at({cos (-160)},{sin (-160)});
            \coordinate(z2) at({cos (-20)},{sin (-20)});
            \tkzDefCircle[orthogonal through=z1 and z2](O,A) \tkzGetPoint{B1};
            \tkzClipCircle(O,A);
             \tkzDrawCircle[black](B1,z2);
            \fill (-.1,.5) circle (.4pt);
            \fill (.1,0.2) circle (.4pt);
            \fill (-0.25,0) circle (.4pt);
            \fill (-.27,-.37) circle (.4pt);
        \end{tikzpicture}
        \caption{}
    \end{figure}  

    We therefore apply \autoref{lem_one_of_three_cross-ratio_is_bdd} to the quadruple $(u,v,w,a)$. It follows that one of the cross-ratios $|[v,u,w,a]|$, $|[v,w,u,a]|$ or $|[w,v,u,a]|$ is bounded above by $C_6$. Applying \autoref{lem_cross_ratios_vs_quasi_centres} once again, we conclude that the quasi-centre $\pi_{X^h}(u,v,w)$ lies within distance at most $C_6 + C_5$ of one of the quasi-centres $\pi_{X^h}(a,v,w)$, $\pi_{X^h}(u,a,w)$, or $\pi_{X^h}(u,v,a)$. Moreover, the corresponding triangles share an edge with either $\triangle(a,v,c)$ or $\triangle(a,w,c)$. 

    Without loss of generality, assume that $\pi_{X^h}(u,v,w)$ lies within distance $C_6 + C_5$ of $\pi_{X^h}(a,v,w)$ (the remaining cases are analogous), and that $|[v,u,w,a]|\le C_6.$ 

    In this situation, the triangle $\triangle(a,v,w)$ shares the edge $[a,v]$ with $\triangle(a,c,v)$. Hence, by \autoref{lem_cross_ratios_vs_quasi_centres}, the cross-ratio satisfies
    \begin{align*}
        |[a,c,v,w]| 
        & \le d_{X^h}(\pi_{X^h}(a,v,w),\pi_{X^h}(a,v,c)) + C_5 \\ 
        &\le d_{X^h}(\pi_{X^h}(a,v,w),\pi_{X^h}(u,v,w)) \\ 
        &\qquad +d_{X^h}(\pi_{X^h}(u,v,w),\pi_{X^h}(a,b,c))\\ 
        &\qquad +d_{X^h}(\pi_{X^h}(a,b,c),\pi_{X^h}(a,v,c)) + C_5 \\
        &\le 2 ( C_6 + C_5 + R) + C_5.
    \end{align*}
    As in Case~1, using the $\psi$-quasi-M\"{o}bius property of $f$, we obtain a constant $D''' = D'''(\delta,\psi) \ge 0$ such that $d_{Y^h}(y_1,y_2) \le D'''$. This provides a uniform bound $D'''$ for this case.
    
    Finally, setting, $D = \max\{D', D'', D'''\}$, we conclude that for every $x \in X^h$, the set $E(x)$ is uniformly bounded and has diameter at most $D$.
\end{proof} 

We are now set to prove \autoref{thm_main_theorem_2}, whose proof  follows
 from  \autoref{thm_QM_implies_qi_on_cusped_spaces},  \autoref{thm_rel_QM_implies_cusp_preserving_qi_on_cusped_spaces}, \autoref{prop_boundaries_coincide_1}, \autoref{thm_cusp_preserving_qi_implies_ambient_qi} and \autoref{prop_boundaries_coincide},.
 
\begin{thm}\label{thm_QM_implies_qi_on_cusped_spaces}
    There exist constants $\lambda_1 = \lambda_1(\delta,\psi) \ge 1$ and $\epsilon_1 = \epsilon_1(\delta,\psi) \ge 0$ such that the map $\Phi f: X^h \to Y^h$, which assigns to each point $x \in X^h$ a point in $E(x)$, is a $(\lambda_1,\epsilon_1)$-quasi-isometry. Moreover, any two such choices of the maps $\Phi f$ differ by at most the uniform constant $D$ from \autoref{lem_E(x)_is_bdd}.
\end{thm}

\begin{proof} 
    By \autoref{lem_linearity_for_QM}, we may assume without loss of generality that $\psi$ is linear; that is, \(\psi(t) = At + B\) for some constants $A \ge 1$ and $B \ge 0$. 

    Let $x,y \in X^h$ be arbitrary points. By \autoref{prop_two_points_vs_quasi_projections}, there exists a constant $R_2 = R_2(\delta) \ge 0$ and four distinct points $a,b,c,d \in \partial X^h$ such that 
    \[ 
    d_{X^h}(\pi_{X^h}(a,b,c),x) \le R_2 \quad \text{and} \quad d_{X^h}(\pi_{X^h}(a,c,d),y) \le R_2.
    \]
    Hence, by \autoref{lem_cross_ratios_vs_quasi_centres}, we obtain 
    \begin{align*}
        |[a,b,c,d]| &\approx_{C_5} d_{X^h}(\pi_{X^h}(a,b,c),\pi_{X^h}(a,c,d)) \\ 
        &\approx_{C_5 + 2R_2} d_{X^h}(x,y).
    \end{align*}    
    Since $\pi_{X^h}(fa,fb,fc) \in E(x)$ and $\pi_{X^h}(fa,fc,fd) \in E(y)$, by \autoref{lem_cross_ratios_vs_quasi_centres} and \autoref{lem_E(x)_is_bdd}, we have 
    \begin{align*}
        |[fa,fb,fc,fd]| & \approx_{C_5} d_{Y^h}(\pi_{Y^h}(fa,fb,fc), \pi_{Y^h}(fa,fc,fd)) \\
        & \approx_{C_5 + 2D} d_{Y^h}(\Phi f (x), \Phi f(y)).
    \end{align*}
    Therefore,    
    \begin{align*}
        d_{Y^h}(\Phi f (x), \Phi f(y)) 
        & \le |[fa,fb,fc,fd]| + 2D + C_5 \\
        & \le A |[a,b,c,d]| + B + 2D + C_5 \\
        & \le A d_{X^h}(x,y) + A (2R_2 + C_5) + (B + 2D + C_5).
    \end{align*}
    Similarly,
    \begin{align*}
        d_{X^h}(x,y) 
        & \le |[a,b,c,d]| + 2R_2 + C_5 \\
        & \le A |[fa,fb,fc,fd]| + B + 2R_2 + C_5 \\
        & \le A d_{Y^h}(\Phi f (x), \Phi f(y)) + A (2D + C_5) + (B + 2R_2 + C_5).
    \end{align*}     
    Setting $\lambda_1 = A$ and $\epsilon_1' = \max\{A (2R_2 + C_5) + (B + 2D + C_5), A (2D + C_5) + (B + 2R_2 + C_5)\}$, we see that $\Phi f$ is a $(\lambda_1,\epsilon_1')$–quasi-isometric embedding. 
    
    To show coarse surjectivity of $\Phi f$, let $y\in {Y^h}$ be arbitrary. By \autoref{cor_Quasi_centres_are_Coarsely_dense}, there exists a constant $R_1 = R_1(\delta) \ge 0$ and a triple $(a',b',c') \in \partial^3 {Y^h}$ such that $$d_{Y^h}(y,\pi_{Y^h}(a',b',c'))\le R_1.$$ 

    Choose $x = \pi_{X^h}(f^{-1}(a'),f^{-1}(b'),f^{-1}(c')) \in X^h$. Then both $\Phi f(x)$ and $\pi_{Y^h}(a',b',c')$ lie in $E(x)$, which by \autoref{lem_E(x)_is_bdd} has diameter at most $D$. Thus, $d_{Y^h}(\Phi f(x),\pi_{Y^h}(a',b',c')) \le D,$ and consequently, $$d_{Y^h}(\Phi f(x),y) \le D + R_1.$$
    Setting $\epsilon_1'' = D + R_1$, we conclude that $\Phi f$ is $\epsilon_1''$-surjective.

    Finally, setting $\epsilon_1 = \max\{\epsilon_1',\epsilon_1''\}$, we conclude that $\Phi f$ is a $(\lambda_1,\epsilon_1)$-quasi-isometry. This completes the proof.
\end{proof}

\begin{rem} 
    Given a quasi-M\"{o}bius homeomorphism $f:\partial X^h\to \partial Y^h$, we have, by \autoref{thm_QM_implies_qi_on_cusped_spaces}, obtained a  $(\lambda_1,\epsilon_1)$-quasi-isometry $\Phi f :X^h\to Y^h$. Since $f^{-1}:\partial Y^h\to \partial X^h$ is also $\psi$-quasi-M\"{o}bius, the same argument applied to $f^{-1}$ yields a $(\lambda_1,\epsilon_1)$-quasi-isometry $\Phi f^{-1}: Y^h \to X^h$. 
    Now, for each $x \in X^h$, we can write $\Phi f(x) = \pi_{Y^h}(fa,fb,fc)$ for some $(a,b,c) \in \partial^3 X^h$ with $d_{X^h}(\pi_{X^h}(a,b,c),x) \le R$. By \autoref{defn_defining_set_E(.)} of the set $E(\cdot)$, the set $E(\Phi f(x))$ is uniformly bounded in $X^h$ and contains both $\Phi f^{-1}(\Phi f(x))$ and $\pi_{X^h}(a,b,c)$. Hence, $x$ and $\Phi f^{-1}\circ\Phi f (x)$ lie within a uniformly bounded distance, with the bound depending only on $\delta$ and $\psi$. Similarly, for every $y \in Y^h$, $\Phi f\circ\Phi f^{-1} (y)$ lies within a uniformly bounded distance of $y$. By adjusting the constants, we may assume that $$d_{X^h}(\Phi f^{-1} \circ \Phi f (x),x) \le \epsilon_1 \quad \text{and} \quad d_{Y^h}(\Phi f \circ \Phi f^{-1} (y), y) \le \epsilon_1.$$
    Therefore, $\Phi f^{-1}$ is a quasi-isometry inverse of $\Phi f$.
    
\end{rem}

In order to make the quasi-isometry $\Phi f$ coarsely preserves cusps, we now impose the additional assumption that $f$ is relative quasi-M\"{o}bius.

\begin{prop} \label{thm_rel_QM_implies_cusp_preserving_qi_on_cusped_spaces} 
    If the homeomorphism $f \colon \partial X^h \to \partial Y^h$ is relative $\psi$-quasi-M\"obius, 
then there exists a constant $K = K(\delta,\psi) \ge 0$ such that the induced quasi-isometry 
$\Phi f \colon X^h \to Y^h$ is $K$-coarsely cusp-preserving;  that is, $\Phi f$ satisfies 
conditions~(i) and~(ii) of \autoref{defn_cusp_preserving}.
\end{prop}

\begin{proof}
    By the definition of relative quasi-M\"{o}bius maps (\autoref{defn_relative_quasi-Mobius}), the map $f$ is, in particular, quasi-M\"{o}bius. Hence, by \autoref{lem_linearity_for_QM}, we may assume without loss of generality that the distortion function $\psi \colon [0,\infty) \to [0,\infty)$ is linear; that is, \(\psi(t) = At + B\) for some constants $A \ge 1$ and $B \ge 0$. Therefore, for any triple $(a,b,c) \in \partial^3 X^h$ with $c$ a parabolic endpoint, we have
    \[
    |[fa,fb,fc]| \approx_{A,B} |[a,b,c]|.
    \]

    Let $H \in \mathcal{H}_{G_1}$ be a horosphere in $X^h$, and let $c_H \in \partial X^h$ denote the parabolic endpoint corresponding to $H^h$. Let $h \in H$ be an arbitrary point. Then there exists a geodesic ray $[a,c_H]$ passing through $h$, where $a \in \partial X^h$. By \autoref{prop_a_point_on_geodesic_vs_QP}, there exist a point $b \in \partial X^h$ and a constant $R_1 = R_1(\delta) \ge 0$ such that
    \[
    d_{X^h}(\pi_{X^h}(a,b,c_H),h) \le R_1.
    \]

    Let $H'\in \mathcal{H}_{G_2}$ be the horosphere in $Y^h$ corresponding to the parabolic endpoint $f(c_H)$, and set $a' = f(a)$, $b' = f(b)$, and $c_H' = f(c_H)$. 
    By \autoref{prop_relative_cross_ratio_vs_quasi_centres}, there exists a constant $C_7 = C_7(\delta) \ge 0$ such that
    \[
    |[a,b,c_H]| \approx_{C_7} d_{X^h}(\pi_{X^h}(a,b,c_H), H) \quad \text{and} \quad |[a',b',c_H']| \approx_{C_7} d_{Y^h}(\pi_{Y^h}(a',b',c_H'), H').
    \]
    Since quasi-isometries coarsely map quasi-centres to quasi-centres, by \autoref{lem_qi_coarsely_preserves_the_quasi-centres}, there exists a constant $C_3 = C_3(\delta,\lambda_1,\epsilon_1) \ge 0$ such that
    \[
    d_{Y^h}\!\left(\Phi f(\pi_{X^h}(a,b,c_H)), \pi_{Y^h}(a',b',c_H')\right) \le C_3.
    \]
    Since $\Phi f$ is a $(\lambda_1,\epsilon_1)$-quasi-isometry, using the above inequalities we obtain
    \begin{align*}
        d_{Y^h}(\Phi f(h), H') 
        & \le d_{Y^h}(\Phi f(h), \pi_{Y^h}(a',b',c_H')) + d_{Y^h}(\pi_{Y^h}(a',b',c_H'), H') \\ 
        & \le d_{Y^h}(\Phi f(h), \Phi f(\pi_{X^h}(a,b,c_H))) + C_3 + |[a',b',c_H']| + C_7 \\ 
        & \le (\lambda_1 R_1 + \epsilon_1) + C_3 + (A|[a,b,c_H]| + B) + C_7 \\ 
        & \le A\bigl(d_{X^h}(\pi_{X^h}(a,b,c_H), H) + C_7\bigr) + B + (\lambda_1 R_1 + \epsilon_1) + C_3 + C_7 \\ 
        & \le AR_1 + (A+1)C_7 + B + (\lambda_1 R_1 + \epsilon_1) + C_3. 
    \end{align*}
    
    Therefore, setting \(K = (A+\lambda_1)R_1 + (A+1)C_7 + C_3 + B + \epsilon_1\), we conclude that for each $H \in \mathcal{H}_{G_1}$ there exists $H' \in \mathcal{H}_{G_2}$ such that $\Phi f(H)$ is contained in the $K$-neighbourhood of $H'$. 

    On the other hand, since $\Phi f^{-1}$ is also a $(\lambda_1,\epsilon_1)$-quasi-isometry, it is a quasi-isometric inverse of $\Phi f$, induced by the $\psi$-quasi-M\"{o}bius homeomorphism $f^{-1}$. A similar argument applied to $\Phi f^{-1}$ shows that for each $H' \in \mathcal{H}_{G_2}$ there exists $H \in \mathcal{H}_{G_1}$ such that $\Phi f^{-1}(H')$ lies within the $K$-neighbourhood of $H$. Hence, $\Phi f$ (and similarly $\Phi f^{-1}$) is a $K$-coarsely cusp-preserving quasi-isometry. This completes the proof.  
\end{proof} 

Since $\Phi f \colon X^h \to Y^h$ is a quasi-isometry, it induces a boundary homeomorphism $\partial \Phi f \colon \partial X^h \to \partial Y^h$. Because $\Phi f$ coarsely preserves cusps, the induced map $\partial \Phi f$ agrees with $f$ on the parabolic endpoints of $\partial X^h$. Since these parabolic endpoints are dense in $\partial X^h$ (see \cite{Bowditch}, Section~9), it follows that $\partial \Phi f = f$ on the entire boundary.
However, in the absence of parabolic subgroups, as in the case of (non-elementary) hyperbolic groups, this argument no longer applies. In the next proposition, we give a general proof that $\partial \Phi f = f$, valid for both hyperbolic and relatively hyperbolic groups.

\begin{prop}\label{prop_boundaries_coincide_1}
    The boundary map $\partial\Phi f$, extension of $\Phi f$, coincides with the given homeomorphism $f$; that is, $\partial\Phi f=f$.
\end{prop}

\begin{proof}
    Let $z \in \partial X^h$ be an arbitrary boundary point. Fix a basepoint \(x_0 = \pi_{X^h}(a_0,b_0,c_0) \in X^h,\) where $(a_0,b_0,c_0) \in \partial^3 X^h$. Then there exists a sequence $\{g_i\} \subset G$ such that $g_i x_0 \to z$ in $X^h \cup \partial X^h$. To prove that $\partial \Phi f (z) = f(z)$, it suffices to show that the sequence $\{\Phi f(g_i x_0)\}$ converges to $f(z)$. 

    For each $i \ge 1$, let \(x_i = g_i x_0, a_i = g_i a_0, b_i = g_i b_0, c_i = g_i c_0\). By \autoref{lem_qi_coarsely_preserves_the_quasi-centres}, there exists a constant $C_3 = C_3(\delta,1,0) \ge 0$ such that 
    \[
    d_{X^h}\bigl(x_i,\pi_{X^h}(a_i,b_i,c_i)\bigr) \le C_3 \quad \text{for all } i.
    \]
    Since $\partial X^h$ is compact, after passing to a subsequence we may assume \((a_i,b_i,c_i) \to (a,b,c) \in \partial^3 X^h\).
    Because $x_i \to z$ in $X^h \cup \partial X^h$, at least two of the points $a,b,c$ must coincide with $z$. Otherwise, by \autoref{lem_quasi-continuity_of_quasi-centres}, there would exist $N \ge 1$ such that for all $i \ge N$, the quasi-centre $\pi_{X^h}(a_i,b_i,c_i)$ (and hence $x_i$) lies within a uniformly bounded distance of $\pi_{X^h}(a,b,c)$. This would imply that $z$ lies at finite distance from $\pi_{X^h}(a,b,c)$ in $X^h$, contradicting the fact that $z \in \partial X^h$. Without loss of generality, we may therefore assume that $a=b=z$.

    Since $f$ is a homeomorphism of $\partial X^h$, we have 
    \((fa_i,fb_i,fc_i) \to (fa,fb,fc) = (f(z),f(z),f(c))\), and hence \(\pi_{Y^h}(fa_i,fb_i,fc_i) \to f(z)\). Moreover, by \autoref{lem_E(x)_is_bdd}, there exists $D \ge 0$ such that 
    \[
    d_{Y^h}\bigl(\Phi f(x_i),\pi_{Y^h}(fa_i,fb_i,fc_i)\bigr) \le D \quad \text{for all } i.
    \] 
    Therefore, the sequence $\{\Phi f(x_i)\}$ converges to $f(z)$, and we conclude that
    \[
    \partial \Phi f (z) = f(z).
    \]
    This completes the proof.
\end{proof}

The maps $\Phi f$ and $\Phi f^{-1} $ will naturally induce  maps $\varphi_f: G_1\to G_2$ and $\varphi_{f^{-1}} :G_2\to G_1$, respectively, as follows: Since $\Phi f$ (respectively $\Phi f^{-1}$) is $K$-coarsely cusp-preserving, $\Phi f(G_1)$ (respectively $\Phi f^{-1}(G_2)$) is contained in the $K$-neighbourhood of $G_2$ (respectively $G_1$).
We now define $$\varphi_f \coloneqq Pr_{G_2} \circ \Phi f \restriction_{G_1} \quad \text{and} \quad \varphi_{f^{-1}} \coloneqq Pr_{G_1} \circ \Phi f^{-1} \restriction_{G_2},$$ where $Pr_{G_i} : N_{K}(G_i) \to G_i$ denotes a nearest point projection maps defined on the $K$-neighbourhoods $N_{K}(G_i)$, for $i=1,2$. We now show that $\varphi_f$ is the required coarsely cusp-preserving quasi-isometry satisfying the conclusion of \autoref{thm_main_theorem_2}.




\begin{thm}\label{thm_cusp_preserving_qi_implies_ambient_qi}
    There exist constants $\lambda = \lambda(\delta,\psi) \ge 1$, $\epsilon = \epsilon(\delta,\psi) \ge 0$ and $K' = K'(\delta,\psi) \ge 0$ such that $\varphi_f: (G_1,d_{G_1}) \to (G_2,d_{G_2})$ is a $K'$-coarsely cusp-preserving $(\lambda,\epsilon)$-quasi-isometry with quasi-isometry inverse $\varphi_{f^{-1}}$.
\end{thm}

\begin{proof}   
    From the above construction of $\varphi_f$ and $\varphi_{f^{-1}}$, it follows that for every $x\in G_1$ and $y \in G_2$ 
    $$d_{Y^h}(\Phi f(x), \varphi_f(x)) \le K \quad \text{and} \quad d_{X^h}(\Phi f^{-1}(y), \varphi_{f^{-1}}(y)) \le K.$$ 
   


   First, we show that $\varphi_f$ and $\varphi_{f^{-1}}$ are coarse inverses of each other. Let $x \in G_1$. Then
    \begin{align*}
         d_{Y^h}(\Phi f (x), \varphi_f (x)) \le K 
        \implies & d_{X^h}(\Phi f^{-1} \circ \Phi f (x), \Phi f^{-1} \circ \varphi_f (x)) \le \lambda_1 K + \epsilon_1 \\
        \implies & d_{X^h}(x, \Phi f^{-1} \circ \varphi_f (x)) \le (\lambda_1 K + \epsilon_1) + \epsilon_1 \\
        \implies & d_{X^h}(x, \varphi_{f^{-1}} \circ \varphi_f (x)) \le (\lambda_1 K + 2\epsilon_1) + K
    \end{align*}
    Since $G_1$, is properly embedded in $X^h$, using \autoref{lem_Uniformly_Properly_embedded_Lemma}, we have a constant $K_2' = K_2'(\lambda_1 K + 2\epsilon_1 + K) \ge 0$ such that for every $x \in G_1$, we have
    $d_{G_1}(x, \varphi_{f^{-1}} \circ \varphi_f (x)) \le K_2'.$     
    In a similar way, we can show that for every $y \in G_2$, $d_{G_2}(y, \varphi_f \circ \varphi_{f^{-1}} (y)) \le K_2'.$
    Hence, $\varphi_f$ and $\varphi_{f^{-1}}$ are coarse inverses of each other. 

    Next, we show that $\varphi_f$ is a quasi-isometry. Let $x,z \in G_1$ satisfy $d_{G_1}(x,z) = 1$. Then there exist $H_1, H_2 \in \mathcal{H}_{G_1}$ such that $x \in H_1$ and $z \in H_2$. Since $\Phi f$ is a $(\lambda_1,\epsilon_1)$-quasi-isometry, we have
    $$d_{X^h}(x,z) \le 1 \implies d_{Y^h}(\Phi f (x), \Phi f (z)) \le \lambda_1 + \epsilon_1  \implies  d_{Y^h}(\varphi_f(x), \varphi_f(z)) \le \lambda_1 + \epsilon_1 + 2K.$$ 

    Since $Y$, and hence $G_2$, is properly embedded in $Y^h$, it follows from \autoref{lem_Uniformly_Properly_embedded_Lemma} that there exists a constant $K_2'' = K_2''(\lambda_1 + \epsilon_1 + 2K) \ge 0$ such that $$d_{G_2}(\varphi_f(x), \varphi_f (z)) \le K_2''.$$

    Now let $x,z \in G_1$ be arbitrary. Subdivide the geodesic segment $[x,z]_{G_1}$ in $G_1$ into points $x=x_0,x_1,\dots,x_{n}=z$, with $d_{G_1}(x_{i-1}, x_i) = 1$ for $1 \le i \le n$. Then, 
    \begin{align*}
        d_{G_2}(\varphi_f(x), \varphi_f(z)) & \le \sum_{i=1}^{n} d_{G_2}(\varphi_f (x_{i-1}), \varphi_f(x_i))  \le n K_2''  = K_2'' d_{G_1}(x,z).
    \end{align*} 
     Similarly, using the inverse quasi-isometry $\Phi f^{-1}$, for all $y,w \in G_2$, we have $$d_{G_1}(\varphi_{f^{-1}} (y), \varphi_{f^{-1}} (w)) \le K_2'' d_{G_2}(y,w).$$
    Therefore, for all $x,z \in G_1$,
    \begin{align*}
        & d_{G_1}(\varphi_{f^{-1}} \circ \varphi_f(x), \varphi_{f^{-1}} \circ \varphi_f(z)) \le K_2'' d_{G_2}(\varphi_f(x),\varphi_f(z))\\
        \implies &  d_{G_1}(x,z) \le K_2'' d_{G_2}(\varphi_f(x),\varphi_f(z)) + 2 K_2'.
    \end{align*} 

    Also, for all $y \in G_2$, $$\varphi_{f^{-1}}(y) \in G_1 \quad \text{and} \quad d_{G_2}(y,\varphi_f \circ \varphi_{f^{-1}}(y)) \le K_2'.$$ 

    Hence, setting $\lambda = K_2''$  and $\epsilon = 2K_2'$, we conclude that $\varphi_f$ (similarly $\varphi_{f^{-1}}$) is a $(\lambda,\epsilon)$-quasi-isometry. 
    

   Finally, we show that $\varphi_f$ is a coarsely cusp-preserving map. 
    For every $H \in \mathcal{H}_{G_1}$, there exists $H' \in \mathcal{H}_{G_2}$ such that $\Phi f(H) \subseteq N_K(H')$, which implies $\varphi_f (H) \subseteq N_{2K}(H')$.   
    Conversely, for all $H '\in \mathcal{H}_{G_2}$, there exists $H \in \mathcal{H}_{G_1}$ such that $\Phi f^{-1}(H') \subseteq N_K(H)$, which implies $\varphi_{f^{-1}} (H') \subseteq N_{2K}(H)$.  
    Hence, setting $K' = 2K$, we conclude that $\varphi_f$ (and similarly $\varphi_{f^{-1}}$) preserves cusps $K'$-coarsely. This completes the proof.    
\end{proof}

\begin{prop}\label{prop_boundaries_coincide}
    Let \(f:\partial X^h\to\partial Y^h\) be a relative $\psi$-quasi-M\"{o}bius map, and \(\varphi_f\) be the quasi-isometry as in \autoref{thm_cusp_preserving_qi_implies_ambient_qi}. Let $\varphi_f^h: X^h \to Y^h$ denote its extension as defined in \autoref{thm_cusp_prev_qi_implies_homeo}. Then \(\partial\varphi_f^h = f\). 
\end{prop}

\begin{proof}
    By \autoref{thm_cusp_prev_qi_implies_homeo}, the map \(\varphi_f^h \colon X^h \to Y^h\) is a coarsely cusp-preserving quasi-isometry. Let \(\gamma \colon [0,\infty) \to X^h\) be a vertical geodesic ray with \(\gamma(0) \in H\). Then the image \(\varphi_f^h(\gamma)\) is eventually a vertical geodesic ray, while $\Phi f(\gamma)$ is a quasigeodesic ray. Moreover, these two rays remain within a uniformly bounded Hausdorff distance of one another. It follows that for every \(x \in X^h\), the distance in \(Y^h\) between \(\varphi_f^h(x)\) and \(\Phi f(x)\) is uniformly bounded. Consequently, the induced boundary map \(\partial \varphi_f^h \colon \partial X^h \to \partial Y^h\), extension of \(\varphi_f^h\), coincides with \(\partial \Phi f\), and hence with the given homeomorphism \(f\).
\end{proof}


\section{Linear Distortion of Exit Points} \label{sec_linear_distortion_and_rel_hyp}

This section is devoted to presenting another criterion related to the previous results using exit points of geodesics from horoballs.



\subsection{Exit Points and their Linear Distortion} \label{subsec_exit_points}

Let $(G,\mathcal{H}_G)$ be a $\delta$-relatively hyperbolic group with Cayley graph $X$ for some $\delta \ge 0$. Let $X^h$ denote the cusped space and $\partial X^h$ denote the Bowditch boundary of $X$, or equivalently, of $(G,\mathcal{H}_G)$.

Consider the set $\Theta(G) = \{(a,b) \mid a,b \in \partial X^h, \text{ $a$ is parabolic, and } a \ne b\}$. Let $(a,b) \in \Theta(G)$, and let $H_a \in \mathcal{H}_{G}$ denote the horosphere associated with the parabolic endpoint $a$. Given any geodesic $\gamma$ in $X^h$ from $a$ to $b$, there exists a unique point $e_{ab} \in \gamma \cap H_a$, namely the last point of intersection of $\gamma$ with $H_a$, such that the subray of $\gamma$ from $e_{ab}$ to $b$ lies entirely outside the horoball $H_a^h$. This point $e_{ab}$ is called an \emph{exit point} of $(a,b)$ from $H_a$. The set of all exit points on $H_a$ for $(a,b)$ is denoted by $\mathcal{E}(a,b)$.
By the $\delta$-hyperbolicity of $X^h$, any two geodesics joining $a$ and $b$ lie within a $2\delta$-neighbourhood of each other. Thus, the set $\mathcal{E}(a,b)$ is uniformly bounded in $X^h$. By applying \autoref{lem_Uniformly_Properly_embedded_Lemma}, the set $\mathcal{E}(a,b)$ is also uniformly bounded with respect to the word metric $d_G$ on $G$.

\begin{prop}\label{prop_exit_points_are_bdd}
    There exists a constant $M_1 = M_1(\delta) \ge 0$ such that for every pair $(a,b) \in \Theta(G)$, the set $\mathcal{E}(a,b)$ is non-empty and has diameter at most $M_1$ in $(G,d_G)$.
\end{prop} 

We define a map $\vartheta_{G}: \Theta(G)\to G$ by assigning to each pair $(a,b) \in \Theta(G)$ a point in $\mathcal{E}(a,b)$. Any two such choices differ by at most $M_1$ by \autoref{prop_exit_points_are_bdd}.
Since the cosets of parabolic subgroups cover the group \(G\), for every \(g \in G\) there exists a parabolic subgroup \(H_i\) such that \(g \in gH_i\). Let \(a \in \partial X^h\) be the parabolic endpoint corresponding to the coset \(gH_i\). Choose a point \(b' \in \partial X^h \setminus \{a\}\), and let \(g' \in gH_i\) denote the exit point of the geodesic \([a,b']\) from the horoball \((gH_i)^h\). Set \(h = g'^{-1}g \in H_i\). Then \(gg'^{-1} = ghg^{-1} \in gH_i g^{-1}\). The left translation \(L_{gg'^{-1}}\) by \(gg'^{-1}\) on \(X\) induces an isometry \(L^h_{gg'^{-1}}\) of \(X^h\) satisfying \( L^h_{gg'^{-1}}(g') = g\) and \(\partial L^h_{gg'^{-1}}(a) = a \). Let \([a,b]\) be the bi-infinite geodesic obtained by applying \(L^h_{gg'^{-1}}\) to \([a,b']\). Then the exit point of \([a,b]\) from the horoball \((gH_i)^h\) is precisely \(g\). Therefore, as an immediate consequence of \autoref{prop_exit_points_are_bdd}, the map $\vartheta_G$ is coarsely surjective. 

\begin{lem}\label{lem_exit_points_are_coarsely_onto}
    For every $g \in G$, there exists a pair $(a,b) \in \Theta(G)$ with $d_{G}(\vartheta_{G}(a,b),g)\le M_1,$ where $M_1 = M_1(\delta) \ge 0$ is the constant considered from \autoref{prop_exit_points_are_bdd}. 
\end{lem}


\begin{defn}[Linear distortion of exit points] \label{defn_linear_distortion}
    Let $(G_1,\mathcal{H}_{G_1})$ and $(G_2,\mathcal{H}_{G_2})$ be two $\delta$-relatively hyperbolic groups with respective Cayley graphs $X$ and $Y$, where $\delta \ge 0$. A homeomorphism $f:\partial X^h \to \partial Y^h$ is said to linearly distort exit points if:
    \begin{enumerate}[$(i)$]
        \item The maps $f$ and $f^{-1}$ send parabolic endpoints to parabolic.

        \item There exist constants $A \ge 1$ and $B \ge 0$ such that for all $(a,b),(c,d) \in \Theta(G_1)$,  
        $$d_{G_2}(\vartheta_{G_2}(fa,fb),\vartheta_{G_2}(fc,fd)) \approxeq_{A,B} d_{G_1}(\vartheta_{G_1}(a,b),\vartheta_{G_1}(c,d)).$$
    \end{enumerate}    
    We say $A$ and $B$ are the constants of distortion.
\end{defn}


\subsection{Proof of \autoref{thm_main_theorem_3}} \label{subsec_main_theorem_3}

Given constants $\delta, \epsilon, K \ge 0$ and $\lambda \ge 1$, let $(G_1, \mathcal{H}_{G_1})$ and $(G_2, \mathcal{H}_{G_2})$ be two $\delta$-relatively hyperbolic groups with Cayley graphs $X$ and $Y$, respectively. 
Suppose $\phi : G_1 \to G_2$ is a $K$-coarsely cusp-preserving $(\lambda,\epsilon)$-quasi-isometry. Then $\phi$ induces a $(\lambda',\epsilon')$-quasi-isometry $\phi^h: X^h \to Y^h$ between the cusped spaces $X^h$ and $Y^h$, where $\lambda'\ge 1$ and $\epsilon'\ge 0$ depend on $\delta, \lambda, \epsilon$, and $K$. Moreover, $\phi^h$ induces a homeomorphism $\partial\phi^h: \partial X^h \to \partial Y^h$ between their Bowditch boundaries, such that $\partial \phi^h$ and $(\partial \phi ^h)^{-1}$ send parabolic endpoints to parabolic endpoints. 

\begin{thm}
  The induced homeomorphism $\partial \phi^h: \partial X^h \to \partial Y^h$ linearly distorts exit points, and the constants of the distortion depend only on $\delta,\lambda,\epsilon$, and $K$. 
\end{thm}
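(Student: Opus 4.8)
The plan is to reduce the statement to two facts: that $\varphi$ is a quasi-isometry of the \emph{word} metrics, and that the induced boundary map coarsely carries last exit points to last exit points. Condition $(i)$ of \autoref{defn: linear distortion} is already recorded in the paragraph preceding the theorem (both $\partial\varphi^h$ and its inverse preserve parabolic endpoints), so only the coarse comparison of distances in $(ii)$ remains. Throughout I may take $\vartheta_{G_1}(a,b)=e_{ab}$, since $\vartheta$ is defined only up to the constant $C_1$ of \autoref{prop: exit points are bdd}.

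First I would fix $(a,b)\in\Theta(G_1)$, choose a bi-infinite geodesic $\alpha$ from $a$ to $b$ in $X^h$, and let $e_{ab}\in H_a$ be its last exit point. Since $\varphi$ induces the $(\lambda',\epsilon')$-quasi-isometry $\varphi^h:X^h\to Y^h$ (with $\lambda',\epsilon'$ depending on $\delta,\lambda,\epsilon$), the image $\varphi^h(\alpha)$ is a $(\lambda',\epsilon')$-quasigeodesic joining $fa=\partial\varphi^h(a)$ and $fb=\partial\varphi^h(b)$; by the Stability of Quasigeodesics it lies within a constant $K_0=K_0(\delta,\lambda',\epsilon')$ of a geodesic $\alpha'$ from $fa$ to $fb$. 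Here $fa$ is parabolic, and $K$-coarse cusp-preservation identifies $fa=\partial {H'_a}^{h}$, where $H'_a$ is the horosphere-like set of $Y$ into whose $C$-neighborhood $\varphi(H_a)$ lands.

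The key step is to show that $\varphi^h(e_{ab})=\varphi(e_{ab})$ lies within a uniform $d_{Y^h}$-distance of a last exit point $e'_{fa,fb}$ of $\alpha'$ to $H'_a$, with bound depending only on $\delta,\lambda,\epsilon,K$. A geodesic ending at the parabolic point $fa=\partial {H'_a}^{h}$ penetrates ${H'_a}^{h}$ in a single connected subray terminating at $fa$, so $\alpha'$ has a well-defined last exit point $e'_{fa,fb}$. On the other hand, $\varphi(e_{ab})\in\varphi(H_a)$ lies in a bounded neighborhood of $H'_a$ by coarse cusp-preservation, while the portion of $\alpha$ beyond $e_{ab}$ avoids $H_a$, so its image stays coarsely outside ${H'_a}^{h}$; combining these with the $K_0$-fellow-traveling of $\varphi^h(\alpha)$ and $\alpha'$ and the visual boundedness of horoballs (\autoref{thm: Visually Bounded}) pins $\varphi(e_{ab})$ to a bounded neighborhood of $e'_{fa,fb}$. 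Since $\vartheta_{G_2}(fa,fb)$ is within $C_1$ of $e'_{fa,fb}$, it is at bounded $d_{Y^h}$-distance from $\varphi(e_{ab})$; as both are level-$0$ vertices of $Y$, the Uniformly Properly Embedded \autoref{lem: Uniformly Properly embedded Lemma} upgrades this to a bound in the word metric, giving $d_{G_2}\big(\vartheta_{G_2}(fa,fb),\varphi(\vartheta_{G_1}(a,b))\big)\lesssim 1$.

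Finally I would assemble the estimate. Applying the previous paragraph to both $(a,b)$ and $(c,d)$ and using the triangle inequality,
$$d_{G_2}\big(\vartheta_{G_2}(fa,fb),\vartheta_{G_2}(fc,fd)\big)\approx d_{G_2}\big(\varphi(\vartheta_{G_1}(a,b)),\varphi(\vartheta_{G_1}(c,d))\big),$$
and because $\varphi:G_1\to G_2$ is itself a $(\lambda,\epsilon)$-quasi-isometry of word metrics,
$$d_{G_2}\big(\varphi(\vartheta_{G_1}(a,b)),\varphi(\vartheta_{G_1}(c,d))\big)\approxeq_{\lambda,\epsilon} d_{G_1}\big(\vartheta_{G_1}(a,b),\vartheta_{G_1}(c,d)\big).$$
Chaining the two displays yields the required $\approxeq_{A,B}$, with $A,B$ assembled from $\lambda,\epsilon$, the Uniformly-Properly-Embedded constant, $C_1$, and the fellow-traveling constant, all of which depend only on $\delta,\lambda,\epsilon,K$; running the identical argument for $\varphi^{-1}$ supplies the reverse comparison. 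The main obstacle is the key step of the third paragraph: making rigorous that the quasi-isometry sends the last exit point of $\alpha$ to within bounded distance of the last exit point of $\alpha'$, since one must simultaneously control the fellow-traveling inside the horoball and the fact that the post-exit tail does not re-enter the cusp.
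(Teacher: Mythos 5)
Your proposal is correct and follows essentially the same route as the paper: both arguments reduce the theorem to showing that $\varphi$ carries the exit point $\vartheta_{G_1}(a,b)$ to within uniformly bounded distance of the exit point $\vartheta_{G_2}(\partial\varphi^h(a),\partial\varphi^h(b))$ --- using stability of quasigeodesics for $\varphi^h([a,b])$, $K$-coarse cusp-preservation, and the Uniformly Properly Embedded Lemma to pass from $d_{Y^h}$ to $d_{G_2}$ --- and then conclude via the triangle inequality, the bound $C_1$ on $\mathcal{E}(\cdot,\cdot)$, the word-metric quasi-isometry property of $\varphi$, and the inverse quasi-isometry for the reverse comparison. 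The ``key step'' you flag as the main obstacle is exactly the point the paper settles with its estimate $d_{Y^h}(y_1',y_1)\le K_0+K$: the fellow-traveller $y_1'$ on the image geodesic lies within $K_0+K$ of the horosphere, and a point on a geodesic emanating from a parabolic endpoint that is this close to the corresponding horosphere is uniformly close to that geodesic's exit point, which is the same combination of fellow-traveling, cusp-preservation, and visual boundedness that you describe.
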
 

\begin{proof}
    Let $(a,b),(c,d) \in \Theta(G_1)$ be two pairs. Let $a'=\partial\phi^h(a)$, $b'=\partial\phi^h(b)$, $c'=\partial\phi^h(c)$ and $d'=\partial\phi^h(d)$. Let $H_a$, $H_c$, $H_{a'}$ and $H_{c'}$ be the horospheres associated with the parabolic endpoints $a,c,a'$ and $c'$, respectively. 
    
    Let $\vartheta_{G_1}(a,b) = x_1 \in [a,b] \cap H_a$ and $\vartheta_{G_1}(c,d) = x_2 \in [c,d] \cap H_c$. Since $\phi$ is $K$-coarsely cusp-preserving, the points $\phi(x_1)$ and $\phi(x_2)$ lie within the $K$-neighbourhood of $H_{a'}$ and $H_{c'}$, respectively. Since $\phi^h$ is $(\lambda',\epsilon')$-quasi-isometry, it sends geodesics to $(\lambda',\epsilon')$-quasigeodesics. By \autoref{Prop_Stability_of_quasigeodesics}, there exists $K_0 = K_0(\delta,\lambda',\epsilon') \ge 0$ and two points $y_1'\in [a',b']$ and $y_2'\in [c',d']$ such that $d_{Y^h}(\phi^h(x_1),y_1') \le K_0$ and $d_{Y^h}(\phi^h(x_2),y_2') \le K_0.$
    
    \begin{figure}[H]
        \centering
            \includegraphics[scale=.28]{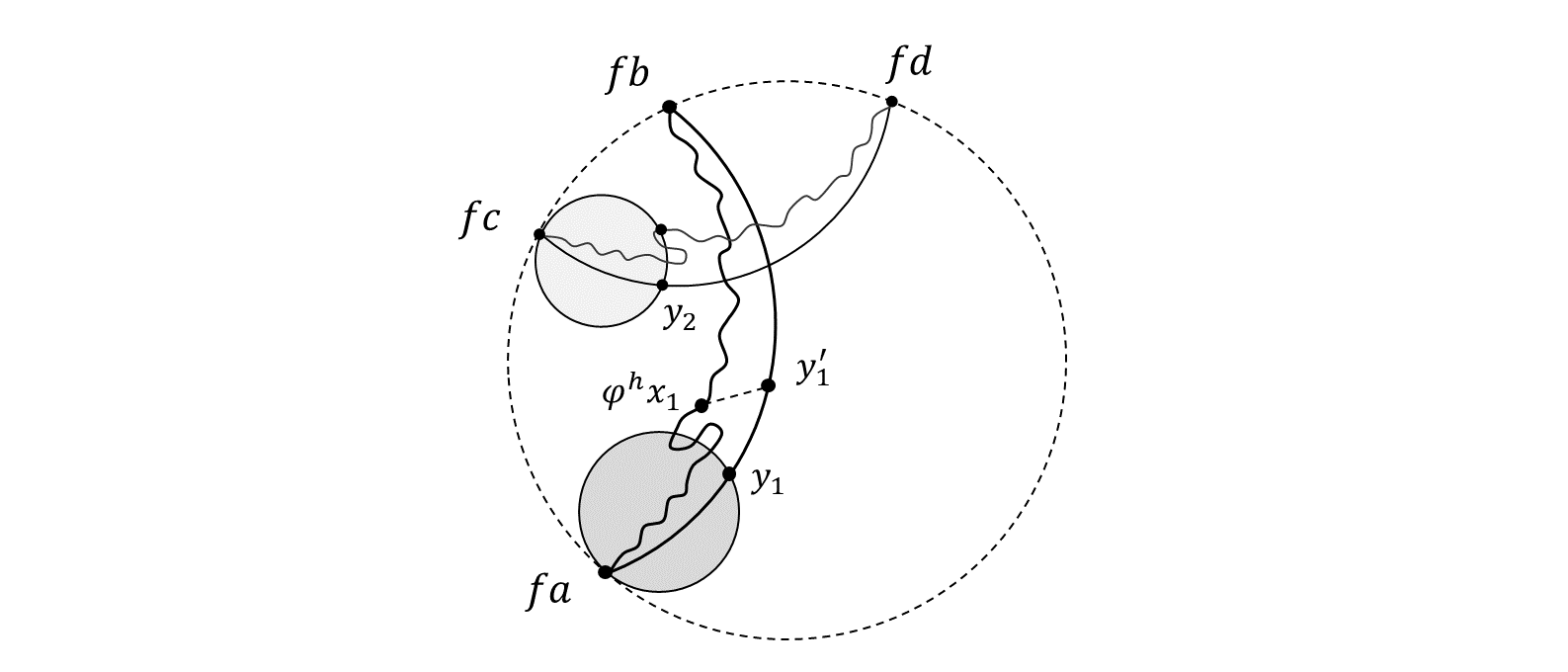}
        \caption{}
    \end{figure}
    
    Let $y_1$ and $y_2$ be the exit points of $[a',b']$ and $[c',d']$, respectively. By applying \autoref{prop_visually_bounded}, we have 
    \begin{align*}
        d_{Y^h}(y_1',y_1) 
        & \le d_{Y^h}(y_1',H_{a'}) + K_3 \\
        & \le d_{Y^h}(y_1',\phi^h(x_1)) +  d_{Y^h}(\phi^h(x_1),H_{a'}) + K_3 \\
        & \le K_0 + K + K_3.
    \end{align*} 
    Similarly, 
    $$d_{Y^h}(y_2',y_2) \le K_0 + K + K_3.$$
    Since $G_2$ is properly embedded in $Y^h$, by \autoref{lem_Uniformly_Properly_embedded_Lemma}, there exists a constant $K_2 = K_2(2K_0 + K + K_3) \ge 0$ such that 
    $$d_{G_2}(\phi^h(x_1),y_1) \le K_2 \quad \text{and} \quad d_{G_2}(\phi^h(x_2),y_2) \le K_2.$$
    By applying \autoref{prop_exit_points_are_bdd} and the quasi-isometry property of $\phi^h$, we have 
    \begin{align*}
        d_{G_2}(\vartheta_{G_2}(a',b'),\vartheta_{G_2}(c',d')) 
        & \le d_{G_2}(y_1,y_2) + 2M_1 \\      
        &\le d_{G_2}(\phi^h(x_1),\phi^h(x_2)) + 2K_2 + 2M_1\\
        &\le \lambda' d_{G_1}(x_1,x_2) + \epsilon' + 2K_2 + 2M_1.
    \end{align*} 
    Setting $A_1 = \lambda'$ and $B_1 = \epsilon' + 2K_2 + 2M_1$, we conclude that
    $$d_{G_2}(\vartheta_{G_2}(a',b'),\vartheta_{G_2}(c',d')) \precapprox_{A_1,B_1} d_{G_1}(\vartheta_{G_1}(a,b),\vartheta_{G_1}(c,d)).$$
    Since the constants $\lambda'$, $\epsilon'$, $K_2$ and $M_1$ depend on $\delta$, $\epsilon$, $\lambda$ and $K$, the constants $A_1$ and $B_1$ depend only on $\delta$, $\epsilon$, $\lambda$ and $K$.
    
    A similar argument applied to a quasi-isometry inverse $(\phi^h)^{-1} : Y^h \to X^h$ yields constants $A_2 \ge 1$ and $B_2 \ge 0$ depending on \(\delta,\lambda,\epsilon,\) and $K$ such that 
    $$d_{G_1}(\vartheta_{G_1}(a,b),\vartheta_{G_1}(c,d)) \precapprox_{A_2,B_2} d_{G_2}(\vartheta_{G_2}(a',b'),\vartheta_{G_2}(c',d')).$$
    
    Finally, by setting $A = \max\{A_1,A_2\}$ and $B = \max\{B_1,B_2\}$, we conclude that $$d_{G_2}(\vartheta_{G_2}(a',b'),\vartheta_{G_2}(c',d')) \approxeq_{A,B} d_{G_1}(\vartheta_{G_1}(a,b),\vartheta_{G_1}(c,d)).$$
    Hence, the induced boundary homeomorphism $\partial\phi^h$ linearly distorts exit points.
\end{proof} 


\subsection{Proof of \autoref{thm_main_theorem_4}}\label{subsec_proof_of_thm_4}


Let $(G_1,\mathcal{H}_{G_1})$ and $(G_2,\mathcal{H}_{G_2})$ be two relatively hyperbolic groups with Cayley graphs $X$ and $Y$, respectively.
Let $f:\partial X^h \to \partial Y^h$ be a homeomorphism between the Bowditch boundaries.

\begin{defn}
    For each $x \in G_1$, define the set
    $$F(x) \coloneqq \vartheta_{G_2}\circ f (\vartheta_{G_1}^{-1}(\overline{B}(x,M_1))),$$
    where $M_1$ is the constant considered from \autoref{lem_exit_points_are_coarsely_onto}. Here, $\overline{B}(x,M_1)$ denotes the closed ball in $(G_1,d_{G_1})$ of radius $M_1$ centred at $x$, and $\vartheta_{G_1}^{-1}(\overline{B}(x,M_1))$ denote the preimage of $\overline{B}(x,M_1)$ in $\Theta(G_1)$ under $\vartheta_{G_1}$.
\end{defn}

\begin{prop} \label{Prop_Boundedness_of_F(x)_2}
    If $f:\partial X^h \to \partial Y^h$ linearly distorts exit points with distortion constants $A \ge 1$ and $B \ge 0$, then there exists a constant $D_2 = D_2(\delta,A,B) \ge 0$ such that for every $x \in G_1$, the set $F(x)$ is nonempty, bounded, and of diameter at most $D_2$ in $(G_2,d_{G_2})$. 
\end{prop} 

\begin{proof}
    The non-emptiness of $F(x)$ follows from \autoref{lem_exit_points_are_coarsely_onto}. For boundedness, let $y_1,y_2 \in F(x)$ be arbitrary. There exist pairs $(a,b),(c,d)\in \Theta(G_1)$ with $d_{G_1}(x,\vartheta_{G_1}(a,b))\le M_1$ and $d_{G_1}(x,\vartheta_{G_1}(c,d))\le M_1$ such that $y_1 = \vartheta_{G_2}(fa,fb)$ and $y_2 = \vartheta_{G_2}(fc,fd)$. Since $f$ linearly distorts exit points, we have 
    \begin{align*}
        d_{G_2}(y_1,y_2) 
        & = d_{G_2}(\vartheta_{G_2}(fa,fb),\vartheta_{G_2} (fc,fd)) \\
        & \le A d_{G_1}(\vartheta_{G_1}(a,b),\vartheta_{G_1}(c,d)) + B \\
        & \le 2AM_1 + B.
    \end{align*}
    Hence, $F(x)$ has diameter at most \(D_2 := 2AM_1 + B\) in $(G_2,d_{G_2})$, which completes the proof.
\end{proof}

We now define a map $\phi_f: G_1 \to G_2$ by assigning to each $x \in G_1$ a point in $F(x)$. By \autoref{Prop_Boundedness_of_F(x)_2}, any two such maps differ by $D_2$ whenever $f$ linearly distorts the exit points. 

\begin{thm}
    If the homeomorphism $f: \partial X^h \to \partial Y^h$ linearly distorts exit points with distortion constants $A \ge 1$ and $B \ge 0$, then there exist constants $\lambda_3 = \lambda_3(\delta, A, B) \ge 1$, $\epsilon_3 = \epsilon_3(\delta, A, B) \ge 0$ and $K'' = K''(\delta, A, B) \ge 0$ such that the induced map $\phi_f : G_1 \to G_2$ is a $(\lambda_3,\epsilon_3)$-quasi-isometry that is $K''$-coarsely cusp-preserving. 
\end{thm}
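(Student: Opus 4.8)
The plan is to mirror the proof of \autoref{thm: Paulin}, replacing the quasi-projection map $\pi_{X^h}$ by the exit-point map $\vartheta_{G_1}$ and the cross-ratio comparison by the linear distortion inequality of \autoref{defn: linear distortion}. The two structural inputs are already in place: \autoref{lem: exit points are coarsely onto} provides coarse surjectivity of $\vartheta_{G_1}$ with constant $R$, and \autoref{Prop: Bounded ness of E(x) 2} guarantees that $E(x)$ is nonempty with diameter at most $D=D(\delta,A,B)$, so that $\Phi f$ is well defined up to an additive error $D$. Given these, the theorem reduces to a careful bookkeeping of constants.

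First I would prove the quasi-isometric embedding. Fix $x,y\in G_1$. By \autoref{lem: exit points are coarsely onto} choose $(a,b),(c,d)\in\Theta(G_1)$ with $d_{G_1}(\vartheta_{G_1}(a,b),x)\le R$ and $d_{G_1}(\vartheta_{G_1}(c,d),y)\le R$; then $\vartheta_{G_2}(fa,fb)\in E(x)$ and $\vartheta_{G_2}(fc,fd)\in E(y)$, so each lies within $D$ of $\Phi f(x)$, respectively $\Phi f(y)$. The key point, and this is where the argument is genuinely simpler than its quasi-M\"obius counterpart, is that the inequality in \autoref{defn: linear distortion} applies to \emph{every} pair of elements of $\Theta(G_1)$, so no common endpoint is needed and no chain of adjacent triangles must be traversed. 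Applying the $\precapprox_{A,B}$ half of the distortion estimate together with the triangle-inequality bound $d_{G_1}(\vartheta_{G_1}(a,b),\vartheta_{G_1}(c,d))\le d_{G_1}(x,y)+2R$ gives the upper bound $d_{G_2}(\Phi f(x),\Phi f(y))\le A\,d_{G_1}(x,y)+(2AR+B+2D)$. Applying the reverse half of $\approxeq_{A,B}$ along with $d_{G_2}(\vartheta_{G_2}(fa,fb),\vartheta_{G_2}(fc,fd))\le d_{G_2}(\Phi f(x),\Phi f(y))+2D$ yields the matching lower bound, after which one sets $\lambda=A$ and $\epsilon=\max\{2AR+B+2D,\ 2AD+B+2R\}$.

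It remains to establish coarse surjectivity of $\Phi f$, and here I would use that $f^{-1}$ preserves parabolic endpoints (part $(i)$ of \autoref{defn: linear distortion}). Given $y\in G_2$, apply \autoref{lem: exit points are coarsely onto} in $G_2$ to obtain $(a',b')\in\Theta(G_2)$ with $d_{G_2}(\vartheta_{G_2}(a',b'),y)\le R$; then $(a,b):=(f^{-1}a',f^{-1}b')\in\Theta(G_1)$ since $f^{-1}a'$ is again parabolic. Taking $x=\vartheta_{G_1}(a,b)$ we have $\vartheta_{G_2}(fa,fb)=\vartheta_{G_2}(a',b')\in E(x)$, so that $d_{G_2}(\Phi f(x),y)\le D+R+C_1$, where the $C_1$ of \autoref{prop: exit points are bdd} absorbs the ambiguity in the choice of exit point.

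The genuinely hard work has already been carried out in \autoref{Prop: Bounded ness of E(x) 2}; once $E(x)$ is known to be uniformly bounded, the present statement is a clean assembly, and the only point requiring care is to keep the two directions of $\approxeq_{A,B}$ distinct so that the final constants $\lambda$ and $\epsilon$ depend only on $\delta,A,B$. In particular, unlike \autoref{thm: Paulin}, no separate linearity lemma for the distortion function is needed here, since the distortion is affine by hypothesis.
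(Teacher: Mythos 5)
Your proposal is correct and takes essentially the same route as the paper's own proof: the identical two inputs (coarse surjectivity of $\vartheta_{G_1}$ with constant $R$, and the uniform bound $D$ on $E(x)$), the identical constant bookkeeping yielding $\lambda = A$ and additive errors $2(AR+D)+B$ and $2(AD+R)+B$, and the identical surjectivity argument via pulling back a pair in $\Theta(G_2)$ through $f^{-1}$. The only cosmetic deviation is the extra $C_1$ in your surjectivity bound, which is unnecessary (since $fa=a'$ and $fb=b'$, the point $\vartheta_{G_2}(fa,fb)$ \emph{is} $\vartheta_{G_2}(a',b')$) but harmless.
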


\begin{proof}
    First, we show that $\phi_f$ is a quasi-isometry. For any two elements $x_1,x_2 \in G_1$, by \autoref{lem_exit_points_are_coarsely_onto}, there exist pairs $(a,b),(c,d)\in \Theta(G_1)$ such that $d_{G_1}(x_1,\vartheta_{G_1}(a,b))\le M_1$ and $d_{G_1}(x_2,\vartheta_{G_1}(c,d))\le M_1$. Hence, 
    $$d_{G_1}(\vartheta_{G_1}(a,b),\vartheta_{G_1}(c,d)) \approx_{2M_1} d_{G_1}(x_1,x_2).$$
    Since $\vartheta_{G_2}(f a,f b)\in F(x_1)$ and $\vartheta_{G_2}(f c,f d)\in F(x_2)$, by applying \autoref{Prop_Boundedness_of_F(x)_2}, we have 
    $$d_{G_2}(\phi_f(x_1),\phi_f(x_2)) \approx_{2D_2} d_{G_2}(\vartheta_{G_2}(f a,f b),\vartheta_{G_2}(f c,f d)).$$
    Since $f$ linearly distorts exit points, we obtain
    \begin{align*}
        d_{G_2}(\phi_f(x_1),\phi_f(x_2)) 
        & \le d_{G_2}(\vartheta_{G_2}(f a,f b),\vartheta_{G_2}(f c,f d)) + 2D_2 \\
        & \le A d_{G_1}(\vartheta_{G_1}(a,b),\vartheta_{G_1}(c,d)) + B + 2D_2 \\
        & \le A d_{G_1}(x_1,x_2) + 2A M_1 + B + 2D_2.
    \end{align*}
    Similarly, 
    \begin{align*}
        d_{G_1}(x_1,x_2) 
        & \le d_{G_1}(\vartheta_{G_1}( a, b),\vartheta_{G_1}( c, d)) + 2M_1\\
        &\le A d_{G_2}(\vartheta_{G_2}(f a,f b),\vartheta_{G_2}(f c,f d)) + B + 2M_1\\
        &\le A d_{G_2}(\phi_f(x_1),\phi_f(x_2)) + 2AD_2 + B + 2M_1.
    \end{align*}    
    Hence, setting $\lambda_3 = A$ and $\epsilon_3' = \max\{ 2A M_1 + B + 2D_2, 2AD_2 + B + 2M_1 \}$, we conclude that $\phi_f$ is a $(\lambda_3,\epsilon_3')$-quasi-isometric embedding. 
    
    Now, for every $y \in G_2$, there exists a pair $(a',b') \in \Theta(G_2)$ such that $d_{G_2}(\vartheta_{G_2}(a',b'),y) \le M_1$. 
    Choose $a,b \in \partial X^h$ with $fa = a'$ and $fb = b'$. Since $f^{-1}$ preserves parabolic endpoints, we have $(a,b)\in \Theta(G_1)$. Let $x = \vartheta_{G_1}( a, b) \in G_1$, which implies $\vartheta_{G_2}(f a,f b)\in F(x)$. Then, by applying \autoref{Prop_Boundedness_of_F(x)_2}, we have $d_{G_2}(\phi_f(x),\vartheta_{G_2}(f a,f b))\le D_2$, and hence $$d_{G_2}(\phi_f(x),y)\le D_2 + M_1.$$ 
    Hence, setting $\epsilon_3'' = D_2 + M_1$, we conclude that $\phi_f$ is $\epsilon_3''$-coarsely surjective.  
    Finally, setting $\epsilon_3 = \max\{\epsilon_3',\epsilon''\}$, we conclude that $\phi_f$ is a $(\lambda_3,\epsilon_3)$-quasi-isometry. 

    Now, we show that $\phi_f$ is coarsely cusp-preserving. Let $H \in \mathcal{H}_{G_1}$ be a horosphere with parabolic endpoint $a_H \in \partial X^h$, and let $x \in H$. There exists a bi-infinite geodesic $[a_H,b]$, with $b \in \partial X^h$, such that $x$ is the exit point of $[a_H,b]$ from $H^h$.     
    Then, the pair $(a_H,b) \in \vartheta_{G_1}^{-1}(\overline{B}(x,M_1))$, and hence $\vartheta_{G_2}(fa,fb) \in F(x) \cap H_{fa}$, where $H_{fa}$ is the horosphere corresponding to the parabolic endpoint $fa$. By \autoref{Prop_Boundedness_of_F(x)_2}, $d_{G_2}(\phi_f(x),\vartheta_{G_2}(fa,fb)) \leq D_2$, and hence $d_{G_2}(\phi_f(x),H_{fa}) \le D_2$. Since $x\in H$ was arbitrary, $$\phi_f(H) \subseteq N_{D_2}(H_{fa}).$$ 

    For the reverse inequality, let \(\phi_f^{-1}\) be a quasi-isometry inverse of \(\phi_f\). Without loss of generality, assume that \(\phi_f^{-1}\) is also a \((\lambda_3,\epsilon_3)\)-quasi-isometry. Let \(H' \in \mathcal{H}_{G_2}\) be a horosphere with parabolic endpoint \(a_{H'} \in \partial Y^h\), and let \(y \in H'\). There exists a bi-infinite geodesic \([a_{H'},b']\), with \(b' \in \partial Y^h\), such that \(y\) is the exit point of \([a_{H'},b']\) from \(H'^h\). Let \(a = f^{-1}(a_{H'})\) and \(b = f^{-1}(b')\), and let \(H_a \in \mathcal{H}_{G_1}\) denote the horosphere with parabolic endpoint \(a\). Let \(x = \vartheta_{G_1}(a,b) \in [a,b]\cap H_a\).
    By \autoref{Prop_Boundedness_of_F(x)_2}, we have $d_{G_2}(\phi_f(x),\vartheta_{G_2}(fa,fb)) \leq D_2$. Since \(y\) is an exit point of \([a_{H'},b']\), by \autoref{prop_exit_points_are_bdd} it follows that  
    \begin{align*}
        d_{G_2}(\phi_f(x), y) \leq D_2+M_1
        &\implies d_{G_2}(\phi_f^{-1} \circ \phi_f(x), \phi_f^{-1}(y)) \leq \lambda_3(D_2+M_1)+\epsilon_3\\
        &\implies  d_{G_2}(x, \phi_f^{-1}(y)) \leq \lambda_3(D_2+M_1)+2\epsilon_3.
    \end{align*}
    Since $y\in H'$ was arbitrary, by setting \(k''= \lambda_3(D_2+M_1)+2\epsilon_3\), we conclude $$\phi_f^{-1}(H') \subseteq N_{K''}(H_a).$$
    This completes the proof.
\end{proof}

\begin{rem}
    Since $\phi_f: G_1 \to G_2$ is a coarsely cusp-preserving quasi-isometry,  it induces a quasi-isometry $\phi_f^h: X^h \to Y^h$ and further it can be extended to a homeomorphism $\partial\phi_f^h: \partial X^h \to \partial Y^h$.
 For each parabolic endpoint $a\in \partial X^h$, since $\phi_f$ coarsely maps $H_a$ to $H_{fa}$, we have  $\partial\phi_f^h(a)=f(a)$. As parabolic endpoints are dense in $\partial X^h$, we have $\partial\phi_f^h=f$.

\end{rem}




\begin{thebibliography}{wide entry}   
    \bibitem[ACGH17]{arzhan} 
     Goulnara N. Arzhantseva, Christopher H. Cashen, Dominik Gruber, and David Hume. 
     \newblock \emph{Characterizations of {M}orse quasi-geodesics via superlinear divergence and sublinear contraction.} 
     \newblock Doc. Math. \textbf{22} (2017), 1193--1224.  MR3690269. 

     \bibitem[BH99]{BH99} 
     Martin R. Bridson and Andr\'e Haefliger. 
     \newblock \emph{Metric Spaces of Non-Positive Curvature}. 
     \newblock Grundlehren der mathematischen Wissenschaften [Fundamental Principles of Mathematical Sciences], Vol.~319. Springer-Verlag, Berlin, 1999. MR1744486. 

     \bibitem[Bow91]{Bow1991} 
     Brian H. Bowditch. 
     \newblock \emph{Notes on Gromov's hyperbolicity criterion for path-metric spaces.} 
     \newblock Group Theory from a Geometrical Viewpoint (Trieste, 1990), 64--167. World Sci. Publ., River Edge, NJ, 1991.  MR1170364. 

     \bibitem[Bow12]{Bowditch}
     Brian H. Bowditch.
     \newblock \emph{Relatively hyperbolic groups.}
     \newblock Int. J. Algebra Comput. \textbf{22} (2012), no.~3. MR2922380. 

     \bibitem[CCM19]{CCM2019}
     Ruth Charney, Matthew Cordes, and Devin Murray.
     \newblock \emph{Quasi-M\"obius homeomorphisms of Morse boundaries.}
     \newblock Bull. Lond. Math. Soc. \textbf{51} (2019), 501--515. MR3964503. 

     \bibitem[Dru09]{Drutu}
     Cornelia Dru\c{t}u.
     \newblock \emph{Relatively hyperbolic groups: geometry and quasi-isometric invariance.} 
     \newblock Comment. Math. Helv. \textbf{84} (2009), 503--546. MR2507252. 

     \bibitem[Far98]{Farb} 
     Benson Farb.
     \newblock \emph{Relatively hyperbolic groups.}
     \newblock Geom. Funct. Anal. \textbf{8} (1998), 810--840.  MR1650094.

     \bibitem[GM08]{GM2008}
     Daniel Groves and Jason Fox Manning.
     \newblock \emph{Dehn filling in relatively hyperbolic groups.}
     \newblock Israel J. Math. \textbf{168} (2008), 317--429.  MR2448064.

    \bibitem[Gro87]{Gro87}
    Mikhail Gromov.
    \newblock \emph{Hyperbolic groups.}
    \newblock In \emph{Essays in Group Theory}, MSRI Publ., Vol.~8, 75--263, Springer, New York, 1987. MR0919829.

    \bibitem[HH21]{Healy-Hruska}
    Brendan Burns Healy and G. Christopher Hruska.
    \newblock \emph{Cusped spaces and quasi-isometries of relatively hyperbolic groups.}
    \newblock arXiv:2010.09876, 2021. arXiv2010.09876.
    
    \bibitem[Hru10]{Hruska}
    G. Christopher Hruska.
    \newblock \emph{Relative hyperbolicity and relative quasiconvexity for countable groups.}
    \newblock Algebr. Geom. Topol.
    \textbf{10} (2010), 1807--1856. MR2684983.
    
    \bibitem[KM13]{Kolpakov-Martelli}
    Alexander Kolpakov and Bruno Martelli.
    \newblock \emph{Hyperbolic four-manifolds with one cusp.}
    \newblock Geom. Funct. Anal.
    \textbf{23} (2013), 1903--1933. MR3132905.
    
    \bibitem[MS24]{Mackay-Sisto}
    John M. Mackay and Alessandro Sisto.
    \newblock \emph{Maps between relatively hyperbolic metric spaces and between their boundaries.}
    \newblock Trans. Amer. Math. Soc.
    \textbf{377} (2024), 1409--1454. MR4688555.
    
    \bibitem[MT10]{Mackay-Tyson}
    John M. Mackay and Jeremy T. Tyson.
    \newblock \emph{Conformal Dimension}.
    \newblock Univ. Lecture Ser., Vol.~54,
    Amer. Math. Soc., Providence, RI, 2010. MR2662522.
    
    \bibitem[Pal10]{Pal}
    Abhijit Pal.
    \newblock \emph{Cannon--Thurston Maps and Relative Hyperbolicity}.
    \newblock Ph.D. Thesis, Indian Statistical Institute, Kolkata, 2010.
    
    \bibitem[Pau96]{Pau96}
    Fr\'ed\'eric Paulin.
    \newblock \emph{Un groupe hyperbolique est d\'etermin\'e par son bord.}
    \newblock J. Lond. Math. Soc. (2) \textbf{54} (1996), 50--70. MR1395067.
    
    \bibitem[Sch95]{Schwartz}
    Richard Evan Schwartz.
    \newblock \emph{The quasi-isometry classification of rank one lattices.}
    \newblock Inst. Hautes \'Etudes Sci. Publ. Math. \textbf{82} (1995), 133--168. MR1383215.
\end{thebibliography}
\end{document}